\newtheorem{theorem}{Theorem}[section]
\newtheorem{corollary}[theorem]{Corollary}
\newtheorem{lemma}[theorem]{Lemma}
\newtheorem{proposition}[theorem]{Proposition}
\newtheorem{conjecture}[theorem]{Conjecture}
\theoremstyle{definition}
\newtheorem{definition}[theorem]{Definition}
\newtheorem{example}[theorem]{Example}
\newtheorem{problem}[theorem]{Problem}
\theoremstyle{remark}
\newtheorem{remark}[theorem]{Remark}
\numberwithin{equation}{section}
\DeclareMathOperator{\diam}{diam}
\DeclareMathOperator{\dom}{dom}
\newcommand{\RR}{\mathbb{R}}
\newcommand{\QQ}{\mathbb{Q}}
\def\<#1>{\langle #1 \rangle}
\newbox\onebox
\newcommand{\coherent}[1]{\mathbin{\setbox\onebox=\hbox{$=$}\lower0.7\ht%
\onebox\hbox{$\stackrel{#1}{=}$}}}
\begin{document}

\title[Combinatorial properties]{Combinatorial properties of ultrametrics and generalized ultrametrics}
\author{Oleksiy Dovgoshey}

\newcommand{\acr}{\newline\indent}

\address{\textbf{O. Dovgoshey}\acr
Function theory department\acr
Institute of Applied Mathematics and Mechanics of NASU\acr
Dobrovolskogo str. 1, Slovyansk 84100, Ukraine}

\email{oleksiy.dovgoshey@gmail.com}

\subjclass[2010]{Primary 54E35, Secondary 06A05, 06A06}

\keywords{ultrametric, generalized ultrametric, equivalence relation, poset, totally ordered set, isotone mapping.}

\begin{abstract}
Let \(X\), \(Y\) be sets and let \(\Phi\), \(\Psi\) be mappings with domains \(X^{2}\) and \(Y^{2}\) respectively. We say that \(\Phi\) and \(\Psi\) are \emph{combinatorially similar} if there are bijections \(f \colon \Phi(X^2) \to \Psi(Y^{2})\) and \(g \colon Y \to X\) such that \(\Psi(x, y) = f(\Phi(g(x), g(y)))\) for all \(x\), \(y \in Y\). Conditions under which a given mapping is combinatorially similar to an ultrametric or a pseudoultrametric are found. Combinatorial characterizations are also obtained for poset-valued ultrametric distances recently defined by Priess-Crampe and Ribenboim.
\end{abstract}

\maketitle

\section{Introduction}

Recall some definitions from the theory of metric spaces. Let \(X\) be a set, let \(X^{2}\) be the Cartesian square of \(X\), 
\[
X^{2} = X \times X = \{\<x, y> \colon x, y \in X\},
\]
and let \(\RR^{+} = [0, \infty)\).

\begin{definition}\label{d1.1}
A \textit{metric} on \(X\) is a function \(d\colon X^{2} \to \RR^{+}\) such that for all \(x\), \(y\), \(z \in X\):
\begin{enumerate}
\item \(d(x,y) = 0\) if and only if \(x=y\), the \emph{positive property};
\item \(d(x,y)=d(y,x)\), the \emph{symmetric property};
\item \(d(x, y)\leq d(x, z) + d(z, y)\), the \emph{triangle inequality}.
\end{enumerate}
A metric \(d\colon X^{2} \to \RR^{+}\) is an \emph{ultrametric} on \(X\) if
\begin{enumerate}
\item [\((iv)\)] \(d(x,y) \leq \max \{d(x,z),d(z,y)\}\)
\end{enumerate}
holds for all \(x\), \(y\), \(z \in X\).
\end{definition}

Inequality \((iv)\) is often called the {\it strong triangle inequality}.

The theory of ultrametric spaces is closely connected with various investigations in mathematics, physics, linguistics, psychology and computer science. Different properties of ultrametrics have been studied in~\cite{DM2009, DD2010, DP2013SM, Groot1956, Lemin1984FAA, Lemin1984RMS39:5, Lemin1984RMS39:1, Lemin1985SMD32:3, Lemin1988, Lemin2003, QD2009, QD2014, BS2017, DM2008, DLPS2008, KS2012, Vaughan1999, Vestfrid1994, Ibragimov2012, GomoryHu(1961), Carlsson2010, DLW, Fie, GurVyal(2012), GV, Hol, H04, BH2, Lemin2003, Bestvina2002, DDP(P-adic), DP2019, DPT(Howrigid),PD(UMB), P2018(p-Adic),DP2018, DPT2015, CO2017TaAoC, Wei2017TaAoC, Ber2019SMJ}.

An useful generalization of the concept of ultrametric is the concept of pseudoultrametric and this is one of the main objects of our research below.

\begin{definition}\label{ch2:d2}
Let \(X\) be a set and let \(d \colon X^{2} \to \RR^{+}\) be a symmetric function such that \(d(x, x) = 0\) holds for every \(x \in X\). The function \(d\) is a \emph{pseudoultrametric} (\emph{pseudometric}) on \(X\) if it satisfies the strong triangle inequality (triangle inequality).
\end{definition}

The strong triangle inequality also admits a natural generalization for poset-valued mappings.

Let \((\Gamma, \leqslant)\) be a partially ordered set with the smallest element \(\gamma_0\) and let \(X\) be a nonempty set.

\begin{definition}\label{d1.3}
A mapping \(d \colon X^{2} \to \Gamma\)  is an \emph{ultrametric distance}, if the following conditions hold for all \(x\), \(y\), \(z \in X\) and \(\gamma \in \Gamma\).
\begin{enumerate}
\item [\((i)\)] \(d(x, y) = \gamma_0\) if and only if \(x = y\).
\item [\((ii)\)] \(d(x, y) = d(y, x)\).
\item [\((iii)\)] If \(d(x, y) \leqslant \gamma\) and \(d(y, z) \leqslant \gamma\), then \(d(x, z) \leqslant \gamma\).
\end{enumerate}
\end{definition}

The ultrametric distances were introduced by Priess-Crampe and Ribenboim \cite{PR1993AMSUH} and studied in~\cite{PR1996AMSUH, PR1997AMSUH, Rib2009JoA, Rib1996PMH}. This generalization of ultrametrics has some interesting applications to logic programming, computational logic and domain theory \cite{Kro2006TCS, PR2000JLP, SH1998IMSB}.

Let us recall now the definition of combinatorial similarity. In what follows we will denote by \(F(A)\) the range of a mapping \(F \colon A \to B\), \(F(A) = \{F(x) \colon x \in A\}\).

\begin{definition}[{\cite{Dov2019a}}]\label{d2.17}
Let \(X\), \(Y\) be nonempty sets and let \(\Phi\), \(\Psi\) be mappings with the domains \(X^{2}\) and \(Y^{2}\), respectively. The mapping \(\Phi\) is \emph{combinatorially similar} to \(\Psi\) if there are bijections \(f \colon \Phi(X^2) \to \Psi(Y^{2})\) and \(g \colon Y \to X\) such that
\begin{equation}\label{d2.17:e1}
\Psi(x, y) = f(\Phi(g(x), g(y)))
\end{equation}
holds for all \(x\), \(y \in Y\). In this case, we say that \(g \colon Y \to X\) is a \emph{combinatorial similarity} for the mappings \(\Psi\) and \(\Phi\).
\end{definition}

Equality~\eqref{d2.17:e1} means that the diagram
\begin{equation*}
\ctdiagram{
\ctv 0,50:{X^{2}}
\ctv 100,50:{Y^{2}}
\ctv 0,0:{\Phi(X^{2})}
\ctv 100,0:{\Psi(Y^{2})}
\ctet 100,50,0,50:{g\otimes g}
\ctet 0,0,100,0:{f}
\ctel 0,50,0,0:{\Phi}
\cter 100,50,100,0:{\Psi}
}
\end{equation*}
is commutative, where we understand the mapping \(g\otimes g\) as
\[
(g\otimes g)(\<y_1, y_2>) := \<g(y_1), g(y_2)>
\]
for \(\<y_1, y_2> \in Y^{2}\).

Some characterizations of mappings which are combinatorially similar to pseudometrics, strongly rigid pseudometrics and discrete pseudometrics were obtained in~\cite{Dov2019a}. The present paper deals with combinatorial properties of ultrametrics and generalized ultrametrics and this can be seen as a further development of research begun in~\cite{Dov2019a, Dov2019IEJA}. 

The paper is organized as follows.

In Section~2 we introduce the notions of strongly consistent mappings and \(a_0\)-coherent mappings and show that these properties of mappings are invariant w.r.t. combinatorial similarities, Proposition~\ref{p2.4}. The main results of the section, Proposition~\ref{p2.7} and Theorem~\ref{p2.10}, describe \(a_0\)-coherent mappings in terms of binary relations defined on the domains of these mappings. An important special case of combinatorial similarities, the so-called weak similarities, are introduced in Definition~\ref{d2.9} at the end of the section.

In Section~3, starting from the characterization of mappings which are combinatorially similar to pseudometrics, we prove Theorem~\ref{t3.7}, a characterization of mappings which are combinatorially similar to pseudoultrametrics with at most countable range. The corresponding results for ultrametrics are given in Corollary~\ref{c3.8}. A basic for our goals subclass of Priess-Crampe and Ribemboim ultrametric distances, the \({\preccurlyeq}_Q\)-ultra\-metrics an related them \({\preccurlyeq}_Q\)-pseudo\-ultra\-metrics, are introduced in Definition~\ref{d3.11}. In Proposition~\ref{p3.16} we show that \({\preccurlyeq}_Q\)-pseudo\-ultra\-metrics are \(a_0\)-coherent. The main result of the section is Theorem~\ref{t3.15} which gives us the necessary and sufficient condition under which a given mapping is combinatorially similar to some \({\preccurlyeq}_Q\)-pseudo\-ultra\-metric. Proposition~\ref{p3.23} and Corollary~\ref{c3.24} expand on \({\preccurlyeq}_Q\)-pseudo\-ultra\-metrics the characterization of ultrametric-preserving functions obtained recently by Pongsriiam and Termwuttipong.

Section~4 mainly describes the interrelations between combinatorial and weak similarities of \({\preccurlyeq}_Q\)-pseudo\-ultra\-metrics. First of all, in Definition~\ref{d3.13}, we expand the notion of weak similarity from usual pseudo\-ultra\-metrics to \({\preccurlyeq}_Q\)-pseudo\-ultra\-metrics. Proposition~\ref{p3.16} claims that, for all \({\preccurlyeq}_Q\)-pseudo\-ultra\-metrics, every weak similarity is a combinatorial similarity (but not conversely in general). The orders \({\preccurlyeq}_Q\), for which the weak similarities and the combinatorial similarities are the same (for the corresponding \({\preccurlyeq}_Q\)-pseudo\-ultra\-metrics) are described in Theorem~\ref{t4.3}. In Proposition~\ref{c4.4}, for every totally ordered set \((Q, {\preccurlyeq}_Q)\) (which contains a smallest element) we construct a \({\preccurlyeq}_Q\)-ultra\-metric satisfying conditions of Theorem~\ref{t4.3}. Using this result in Proposition~\ref{p4.8} we found a metric \(d^{*}\), defined on a set \(X\) with \(|X| = 2^{\aleph_{0}}\), such that \(d^{*}\) is not combinatorially similar to any ultrametric but, for every countable \(X_1 \subseteq X\), the restriction \(d^{*}\) on \(X_1\) is combinatorially similar to an ultrametric.
The mappings which are combinatorially similar to \({\preccurlyeq}_Q\)-pseudo\-ultra\-metrics are described in Theorems~\ref{t4.11}, \ref{t4.15} and \ref{t4.19} for the case of totally ordered \((Q, {\preccurlyeq}_Q)\) satisfying the distinct universal and topological restrictions. 
The final results of the paper, Theorem~\ref{t4.20} and Corollary~\ref{c4.22}, give a kind of necessary and sufficient conditions under which a given mapping is combinatorially similar to a pseudoultrametric or, respectively, to an ultrametric.

\section{Consistency with equivalence relations}

Let \(X\) be a set. A \emph{binary relation} on \(X\) is a subset of the Cartesian square \(X^{2}\). A relation \(R \subseteq X^{2}\) is an \emph{equivalence relation} on \(X\) if the following conditions hold for all \(x\), \(y\), \(z \in X\):
\begin{enumerate}
\item \(\<x, x> \in R\), the \emph{reflexive} property;
\item \((\<x, y> \in R) \Leftrightarrow (\<y, x> \in R)\), the \emph{symmetric} property;
\item \(((\<x, y> \in R) \text{ and } (\<y, z> \in R)) \Rightarrow (\<x, z> \in R)\), the \emph{transitive} property.
\end{enumerate}

Let \(R\) be an equivalence relation on \(X\). A mapping \(F \colon X^{2} \to X\) is \emph{consistent} with \(R\) if the implication
\begin{equation*}
\bigl(\<x_1, x_2> \in R \text{ and } \<x_3, x_4> \in R\bigr) \Rightarrow \bigl(\<F(x_1, x_3), F(x_2, x_4)> \in R\bigr)
\end{equation*}
is valid for all \(x_1\), \(x_2\), \(x_3\), \(x_4 \in X\) (see~\cite[p.~78]{KurMost}). Similarly, we will say that a mapping \(\Phi \colon X^{2} \to Y\) is \emph{strongly consistent} with \(R\) if  the implication
\begin{equation}\label{e2.5}
\bigl(\<x_1, x_2> \in R \text{ and } \<x_3, x_4> \in R\bigr) \Rightarrow \bigl(\Phi(x_1, x_3) = \Phi(x_2, x_4)\bigr)
\end{equation}
is valid for all \(x_1\), \(x_2\), \(x_3\), \(x_4 \in X\).

\begin{remark}\label{r2.1}
Let \(R\) be an equivalence relation on a set \(X\). Then every strongly consistent with \(R\) mapping \(\Phi \colon X^{2} \to X\) is consistent with \(R\). The converse statement holds if and only if \(R\) is the diagonal of \(X\),
\[
R = \Delta_{X} = \{\<x, x> \colon x \in X\}.
\]
\end{remark}

\begin{definition}\label{d2.5}
Let \(X\) be a nonempty set, let \(\Phi\) be a mapping with \(\dom \Phi = X^{2}\) and let \(a_0 \in \Phi(X^{2})\). The mapping \(\Phi\) is \(a_0\)-\emph{coherent} if \(\Phi\) is strongly consistent with the fiber 
\[
\Phi^{-1}(a_0) := \{\<x, y> \colon \Phi(x, y) = a_0\}.
\]
\end{definition}

\begin{remark}\label{r2.3}
In particular, if \(\Phi\) is \(a_0\)-coherent, then \(\Phi^{-1}(a_0)\) is an equivalence relation on \(X\).
\end{remark}

The following proposition claims that the properties to be strongly consistent and to be coherent are invariant w.r.t. combinatorial similarities.

\begin{proposition}\label{p2.4}
Let \(X\), \(Y\) be nonempty sets, let \(\Phi\), \(\Psi\) be combinatorially similar mappings with \(\dom \Phi = X^{2}\) and \(\dom \Psi = Y^{2}\) and the commutative diagram
\begin{equation*}
\ctdiagram{
\def\y{25}
\ctv 0,\y:{X^{2}}
\ctv 100,\y:{Y^{2}}
\ctv 0,-\y:{\Phi(X^{2})}
\ctv 100,-\y:{\Psi(Y^{2})}
\ctet 100,\y,0,\y:{g\otimes g}
\ctet 0,-\y,100,-\y:{f}
\ctel 0,\y,0,-\y:{\Phi}
\cter 100,\y,100,-\y:{\Psi}
}.
\end{equation*}
If \(\Phi\) is strongly consistent with an equivalence relation \(R_X\) on \(X\), then \(\Psi\) is strongly consistent with an equivalence relation \(R_Y\) on \(Y\) satisfying
\[
(\<x,y> \in R_Y) \Leftrightarrow (\<g(x), g(y)> \in R_X)
\]
for every \(\<x, y> \in Y^{2}\). In addition, if \(\Phi\) is \(a_0\)-coherent for \(a_0 \in \Phi(X^{2})\), then \(\Psi\) is \(f(a_0)\)-coherent.
\end{proposition}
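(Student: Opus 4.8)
The plan is to define $R_Y$ directly by pulling back $R_X$ along $g$, i.e. to declare $\langle x, y \rangle \in R_Y$ precisely when $\langle g(x), g(y) \rangle \in R_X$, which is exactly the compatibility condition demanded in the statement. First I would verify that this $R_Y$ is genuinely an equivalence relation on $Y$. This is immediate, since the three defining properties transfer through a function pullback: reflexivity of $R_Y$ from reflexivity of $R_X$ (as $\langle g(x), g(x) \rangle \in R_X$), symmetry from symmetry, and transitivity from transitivity. Notably, neither injectivity nor surjectivity of $g$ is needed at this stage.

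The core step is to establish strong consistency of $\Psi$ with $R_Y$. Assuming $\langle x_1, x_2 \rangle \in R_Y$ and $\langle x_3, x_4 \rangle \in R_Y$, I would translate these via the definition of $R_Y$ into $\langle g(x_1), g(x_2) \rangle \in R_X$ and $\langle g(x_3), g(x_4) \rangle \in R_X$, then apply strong consistency of $\Phi$ with $R_X$, with the arguments $g(x_1), g(x_2), g(x_3), g(x_4)$ in place of $x_1, x_2, x_3, x_4$, to obtain $\Phi(g(x_1), g(x_3)) = \Phi(g(x_2), g(x_4))$. Applying the bijection $f$ to both sides and invoking the commutativity relation $\Psi(x, y) = f(\Phi(g(x), g(y)))$ then yields $\Psi(x_1, x_3) = \Psi(x_2, x_4)$, which is precisely the required implication~\eqref{e2.5} for $R_Y$.

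For the coherence assertion I would specialize the first part to $R_X = \Phi^{-1}(a_0)$, which is an equivalence relation by Remark~\ref{r2.3}, so that the first part furnishes an equivalence relation $R_Y$ with respect to which $\Psi$ is strongly consistent. It then remains to identify $R_Y$ with the fiber $\Psi^{-1}(f(a_0))$. Starting from $\langle x, y \rangle \in \Psi^{-1}(f(a_0))$, i.e.\ $\Psi(x, y) = f(a_0)$, the commutativity relation rewrites this as $f(\Phi(g(x), g(y))) = f(a_0)$; here the injectivity of $f$ lets me cancel $f$ and conclude $\Phi(g(x), g(y)) = a_0$, which is exactly $\langle g(x), g(y) \rangle \in \Phi^{-1}(a_0)$, i.e.\ $\langle x, y \rangle \in R_Y$. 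Since $f$ maps into $\Psi(Y^2)$, the element $f(a_0)$ lies in $\Psi(Y^2)$ and $f(a_0)$-coherence is meaningful; and because $\Psi^{-1}(f(a_0)) = R_Y$, the strong consistency of $\Psi$ with $R_Y$ established above is precisely the $f(a_0)$-coherence of $\Psi$.

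I do not anticipate a serious obstacle: the entire argument is a diagram chase driven by the commutativity relation. The only points requiring genuine care are matching the index pattern in the strong-consistency implication correctly under the substitution $x_i \mapsto g(x_i)$, and deploying the injectivity of $f$ at the single place where $f$ must be cancelled. Surjectivity of $f$ and bijectivity of $g$ play no role beyond guaranteeing that the objects involved are well defined.
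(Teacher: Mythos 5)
Your proof is correct: the pullback relation $R_Y = (g\otimes g)^{-1}(R_X)$ is an equivalence relation, strong consistency transfers through the commutative diagram, and the identification $\Psi^{-1}(f(a_0)) = R_Y$ (using injectivity of $f$ in one direction and commutativity in the other) yields $f(a_0)$-coherence. The paper omits this proof as ``straightforward,'' and your diagram chase is exactly the intended argument, so there is nothing to compare against.
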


The proof is straightforward and we omit it here.
\medskip

Let \(X\) be a set and let \(R_1\) and \(R_2\) be binary relations on \(X\). Recall that a composition of binary relations \(R_1\) and \(R_2\) is a binary relation \(R_1 \circ R_2 \subseteq X^{2}\) for which \(\<x, y> \in R_1 \circ R_2\) holds if and only if there is \(z \in X\) such that \(\<x, z> \in R_1\) and \(\<z, y> \in R_2\). 

Using the notion of binary relations composition we can reformulate Definition~\ref{d2.5} as follows.

\begin{proposition}\label{p2.7}
Let \(X\) be a nonempty set, \(\Phi\) be a mappings with \(\dom \Phi = X^{2}\) and let \(a_0 \in \Phi(X^{2})\). Then \(\Phi\) is \(a_0\)-coherent if and only if the fiber \(R = \Phi^{-1}(a_0)\) is an equivalence relation on \(X\) and the equality
\begin{equation}\label{p2.7:e1}
\Phi^{-1}(b) = R \circ \Phi^{-1}(b) \circ R
\end{equation}
holds for every \(b \in \Phi(X^{2})\).
\end{proposition}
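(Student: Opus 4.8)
The plan is to prove the two implications separately, noting first that one of the two inclusions in~\eqref{p2.7:e1} is free. Indeed, whenever $R$ is reflexive (in particular when it is an equivalence relation), reflexivity gives $\langle x, x\rangle \in R$ for every $x$, so any pair $\langle x, y\rangle \in \Phi^{-1}(b)$ may be written with witnesses $u = x$ and $v = y$, yielding $\langle x, y\rangle \in R \circ \Phi^{-1}(b) \circ R$. Hence $\Phi^{-1}(b) \subseteq R \circ \Phi^{-1}(b) \circ R$ holds unconditionally, and the entire content of~\eqref{p2.7:e1} is the reverse inclusion $R \circ \Phi^{-1}(b) \circ R \subseteq \Phi^{-1}(b)$.

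For the forward implication (that $a_0$-coherence yields~\eqref{p2.7:e1}), I would first invoke Remark~\ref{r2.3} to know that $R = \Phi^{-1}(a_0)$ is an equivalence relation, so the easy inclusion above is available and the fiber condition is meaningful. To obtain the reverse inclusion, fix $b \in \Phi(X^2)$ and take $\langle x, y\rangle \in R \circ \Phi^{-1}(b) \circ R$; by the definition of composition there exist $u, v$ with $\langle x, u\rangle \in R$, $\Phi(u, v) = b$, and $\langle v, y\rangle \in R$. Rewriting $\langle v, y\rangle \in R$ as $\langle y, v\rangle \in R$ by symmetry and applying strong consistency~\eqref{e2.5} with $x_1 = x$, $x_2 = u$, $x_3 = y$, $x_4 = v$ gives $\Phi(x, y) = \Phi(u, v) = b$, that is $\langle x, y\rangle \in \Phi^{-1}(b)$. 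This matching of the four indices is exactly what makes the argument close.

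For the converse, assume $R$ is an equivalence relation and that~\eqref{p2.7:e1} holds for every $b \in \Phi(X^2)$; I must then verify the defining implication~\eqref{e2.5} of strong consistency with $R$. Given $\langle x_1, x_2\rangle \in R$ and $\langle x_3, x_4\rangle \in R$, set $b := \Phi(x_1, x_3) \in \Phi(X^2)$, so that $\langle x_1, x_3\rangle \in \Phi^{-1}(b)$. Using $\langle x_2, x_1\rangle \in R$ (symmetry) together with $\langle x_1, x_3\rangle \in \Phi^{-1}(b)$ and $\langle x_3, x_4\rangle \in R$ exhibits $\langle x_2, x_4\rangle \in R \circ \Phi^{-1}(b) \circ R$, which equals $\Phi^{-1}(b)$ by hypothesis. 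Hence $\Phi(x_2, x_4) = b = \Phi(x_1, x_3)$, which is precisely~\eqref{e2.5}; thus $\Phi$ is strongly consistent with $R = \Phi^{-1}(a_0)$, i.e.\ $a_0$-coherent.

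I expect no genuinely hard step: the proposition is essentially a faithful translation of the pointwise condition~\eqref{e2.5} into the language of relation composition. The only place demanding care is the bookkeeping of the four indices in~\eqref{e2.5} and the consistent use of the reflexivity and symmetry of $R$; in particular one must resist silently identifying $\langle v, y\rangle$ with $\langle y, v\rangle$ without explicitly invoking symmetry, since it is this transposition that aligns the composition with the index pattern of strong consistency.
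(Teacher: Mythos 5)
Your proof is correct and follows essentially the same route as the paper's: both reduce the statement to translating the strong-consistency implication~\eqref{e2.5} into the language of relation composition, using the reflexivity of \(R\) for the inclusion \(\Phi^{-1}(b) \subseteq R \circ \Phi^{-1}(b) \circ R\) and the symmetry of \(R\) to align the witnesses of the composition with the index pattern of~\eqref{e2.5}. The only difference is presentational: you spell out the symmetry step and the witness bookkeeping that the paper leaves implicit.
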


\begin{proof}
It suffices to show that \(\Phi\) is strongly consistent with \(R\) if and only if equality \eqref{p2.7:e1} holds for every \(b \in \Phi(X^{2})\). Let \(b \in \Phi(X^{2})\) and \eqref{p2.7:e1} hold. Suppose \(\<x_1, x_3> \in X^{2}\) such that
\[
\Phi(x_1, x_3) = b.
\]
If \(\<x_2, x_1> \in R\),  \(\<x_1, x_3> \in \Phi^{-1}(b)\) and \(\<x_3, x_4> \in R\), then from the definition of the composition \(\circ\) we obtain
\[
\<x_2, x_4> \in R \circ \Phi^{-1}(b) \circ R
\]
that implies \(\<x_2, x_4> \in \Phi^{-1}(b)\) by equality~\eqref{p2.7:e1}. Thus, the implication~\eqref{e2.5} is valid.

Conversely, suppose that \(\Phi\) is strongly consistent with \(R\). Then~\eqref{e2.5} implies the inclusion
\begin{equation}\label{p2.7:e3}
R \circ \Phi^{-1}(b) \circ R \subseteq \Phi^{-1}(b)
\end{equation} 
for every \(b \in \Phi(X^{2})\). Since \(R\) is reflexive, the converse inclusion is also valid. Equality~\eqref{p2.7:e1} follows.
\end{proof}

\begin{corollary}\label{c2.5}
Let \(X\) be a nonempty set, let \(\Phi\) be a symmetric mapping with \(\dom \Phi = X^{2}\) and let \(a_0 \in \Phi(X^{2})\). Suppose \(R := \Phi^{-1}(a_0)\) is an equivalence relation on \(X\). Then the following conditions are equivalent.
\begin{enumerate}
\item[\((i)\)] \(\Phi\) is \(a_0\)-coherent.
\item[\((ii)\)] \(\Phi^{-1}(b) = R \circ \Phi^{-1}(b) \circ R\) holds for every \(b \in \Phi(X^{2})\).
\item[\((iii)\)] \(\Phi^{-1}(b) = R \circ \Phi^{-1}(b)\) holds for every \(b \in \Phi(X^{2})\).
\item[\((iv)\)] \(\Phi^{-1}(b) = \Phi^{-1}(b) \circ R\) holds for every \(b \in \Phi(X^{2})\).
\item[\((v)\)] For every \(b \in \Phi(X^{2})\), at least one of the equalities 
\[
\Phi^{-1}(b) = R \circ \Phi^{-1}(b), \quad \Phi^{-1}(b) = \Phi^{-1}(b) \circ R
\]
holds.
\end{enumerate}
\end{corollary}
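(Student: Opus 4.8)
The plan is to use Proposition~\ref{p2.7} as the bridge between the coherence condition $(i)$ and the two-sided equality $(ii)$, and then to derive the remaining equivalences from two elementary facts about composition of relations. The first fact is that reflexivity of $R$ gives the inclusions $S \subseteq R \circ S$ and $S \subseteq S \circ R$ for any relation $S \subseteq X^{2}$; indeed, if $\langle x, y\rangle \in S$, then $\langle x, x\rangle \in R$ and $\langle y, y\rangle \in R$ witness membership in $R \circ S$ and $S \circ R$ respectively. The second, and genuinely new, ingredient is a duality coming from symmetry: since $\Phi$ is symmetric, each fiber $S = \Phi^{-1}(b)$ is a symmetric relation, and $R$ is symmetric as an equivalence relation, so the identity $(R \circ S)^{-1} = S^{-1} \circ R^{-1}$ yields $(R \circ S)^{-1} = S \circ R$. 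Consequently, for every $b \in \Phi(X^{2})$,
\[
R \circ \Phi^{-1}(b) = \Phi^{-1}(b) \quad \Longleftrightarrow \quad \Phi^{-1}(b) \circ R = \Phi^{-1}(b),
\]
because one equality is simply the inverse of the other. I would prove this duality first, as the key lemma of the argument.

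With these tools in hand, I would close the cycle $(ii) \Rightarrow (iii) \Rightarrow (iv) \Rightarrow (v) \Rightarrow (ii)$ and invoke Proposition~\ref{p2.7} for $(i) \Leftrightarrow (ii)$. For $(ii) \Rightarrow (iii)$, starting from $\Phi^{-1}(b) = R \circ \Phi^{-1}(b) \circ R$ I use the reflexivity inclusions: $R \circ \Phi^{-1}(b) \subseteq R \circ \Phi^{-1}(b) \circ R = \Phi^{-1}(b) \subseteq R \circ \Phi^{-1}(b)$, so the two sides coincide; note this step needs only reflexivity. The implication $(iii) \Rightarrow (iv)$ is exactly the duality lemma applied fiberwise, and $(iv) \Rightarrow (v)$ is immediate.

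The step $(v) \Rightarrow (ii)$ is where symmetry does the decisive work. Fixing $b$ and writing $S = \Phi^{-1}(b)$, condition $(v)$ supplies one of the equalities $S = R \circ S$ or $S = S \circ R$; by the duality lemma the one we are handed forces the other, so both hold simultaneously. Then
\[
R \circ S \circ R = (R \circ S) \circ R = S \circ R = S,
\]
which is $(ii)$ for this $b$; since $b$ was arbitrary, $(ii)$ follows. Finally $(i) \Leftrightarrow (ii)$ is precisely the content of Proposition~\ref{p2.7}, whose hypothesis that $R$ is an equivalence relation is already built into the present assumptions.

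I do not expect any serious obstacle here: once the symmetry duality is isolated, every implication reduces to a one-line manipulation of compositions together with the reflexivity inclusions. The only point requiring care is bookkeeping with the direction of composition --- making sure that $(R \circ S)^{-1} = S \circ R$ rather than $R \circ S$ --- and remembering that it is the reflexivity \emph{inclusions}, not equalities, that make the containments in $(ii) \Rightarrow (iii)$ go through in both directions.
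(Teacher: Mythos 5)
Your proof is correct and follows essentially the same route as the paper: both arguments rest on Proposition~\ref{p2.7} for \((i) \Leftrightarrow (ii)\) and on the transpose identity \((C \circ B)^{T} = B^{T} \circ C^{T}\) together with the symmetry of \(R\) and of each fiber \(\Phi^{-1}(b)\) to pass between left and right composition with \(R\) --- the paper performs this duality inline in its proof of \((v) \Rightarrow (ii)\), whereas you isolate it as a lemma and arrange the implications in a cycle. The only other difference is cosmetic: in \((ii) \Rightarrow (iii)\) you use a reflexivity sandwich where the paper uses \(R \circ R = R\); both are valid.
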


\begin{proof}
In what follows, for every \(b \in \Phi(X^{2})\), we write \(R_b = \Phi^{-1}(b)\) and, for every \(A \subseteq X^{2}\), define the inverse binary relation \(A^{T}\) by the rule:
\begin{itemize}
\item the membership \(\<x, y> \in A^{T}\) holds if and only if \(\<y, x> \in A\).
\end{itemize}

Suppose \((v)\) is valid and we have 
\begin{equation}\label{c2.5:e1}
R_b = R_b \circ R.
\end{equation}
It is trivial that a binary relation \(A\) is symmetric if and only if we have \(A^{T} = A\). Furthermore, the equality
\[
(C \circ B)^{T} = B^{T} \circ C^{T}
\]
holds for all binary relations \(B\) and \(C\) defined on the one and the same set (see, for example, \cite[p.~15]{How1976AP}). Consequently, from \eqref{c2.5:e1} it follows that
\begin{align*}
R_b &= (R_b)^{T} = (R_b \circ R)^{T} = R^{T} \circ R_b^{T} \\
& = R \circ R_b = R \circ (R_b \circ R) = R \circ R_b \circ R.
\end{align*}
Similarly, from \(R_b = R \circ R_b\) follows \(R_b = R \circ R_b \circ R\). Thus, the implication \((v) \Rightarrow (ii)\) is valid.

If \((ii)\) holds, then we have
\[
R_b = R \circ R_b \circ R
\]
for every \(b \in \Phi(X^{2})\). Since \(R\) is an equivalence relation, the equality \(R \circ R = R\) holds. Consequently,
\begin{align*}
R_b & = (R \circ R) \circ R_b \circ R = R \circ (R \circ R_b \circ R) = R \circ R_b.
\end{align*}
Thus, \((ii)\) implies \((iii)\). Analogously, \((ii)\) implies \((iv)\). The implications \((iii) \Rightarrow (v)\) and \((iv) \Rightarrow (v)\) are evidently valid. To complete the proof we recall that \((i)\) and \((ii)\) are equivalent by Proposition~\ref{p2.7}.
\end{proof}

Let \(X\) be a nonempty set and \(P = \{X_j \colon j \in J\}\) be a set of nonempty subsets of \(X\). Then \(P\) is a \emph{partition} of \(X\) with the blocks \(X_j\) if
\[
\bigcup_{j \in J} X_j = X
\]
and \(X_{j_1} \cap X_{j_2} = \varnothing\) holds for all distinct \(j_1\), \(j_2 \in J\). 

There exists the well-known, one-to-one correspondence between the equivalence relations and partitions.

If \(R\) is an equivalence relation on \(X\), then an \emph{equivalence class} is a subset \([a]_R\) of \(X\) having the form
\begin{equation}\label{e1.1}
[a]_R = \{x \in X \colon \<x, a> \in R\}, a \in X.
\end{equation}
The \emph{quotient set} of \(X\) w.r.t. \(R\) is the set of all equivalence classes \([a]_R\), \(a \in X\).

\begin{proposition}\label{p2.5}
Let \(X\) be a nonempty set. If \(P = \{X_j \colon j \in J\}\) is a partition of \(X\) and \(R_P\) is a binary relation on \(X\) defined as
\begin{itemize}
\item[] \(\<x, y> \in R_P\) if and only if \(\exists j \in J\) such that  \(x \in X_j\) and \(y \in X_j\),
\end{itemize}
then \(R_P\) is an equivalence relation on \(X\) with the equivalence classes \(X_j\). Conversely, if \(R\) is an equivalence relation on \(X\), then the set \(P_R\) of all distinct equivalence classes \([a]_R\) is a partition of \(X\) with the blocks \([a]_R\).
\end{proposition}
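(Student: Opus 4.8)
The plan is to verify the two asserted directions separately, each reducing to elementary checks against the definitions of equivalence relation and of partition.

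For the forward direction I start from the partition $P = \{X_j \colon j \in J\}$ and check the three defining properties of $R_P$. Reflexivity follows because $P$ covers $X$: every $x \in X$ lies in some block $X_j$, whence $\langle x, x\rangle \in R_P$. Symmetry is immediate, since the defining condition ``$x \in X_j$ and $y \in X_j$'' is unchanged under interchange of $x$ and $y$. Transitivity is the only step with any content: from $\langle x, y\rangle \in R_P$ and $\langle y, z\rangle \in R_P$ I obtain indices $j_1$, $j_2$ with $x, y \in X_{j_1}$ and $y, z \in X_{j_2}$; since then $y \in X_{j_1} \cap X_{j_2}$ and distinct blocks are disjoint, I conclude $j_1 = j_2$, so $x$ and $z$ lie in a common block and $\langle x, z\rangle \in R_P$. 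To identify the equivalence classes I fix a block $X_j$ and any $a \in X_j$ and prove $[a]_{R_P} = X_j$ by double inclusion: if $x \in X_j$, then $x, a \in X_j$ gives $x \in [a]_{R_P}$; conversely, $x \in [a]_{R_P}$ yields a block $X_k$ containing both $x$ and $a$, and disjointness together with $a \in X_j \cap X_k$ forces $k = j$, hence $x \in X_j$.

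For the converse direction I start from an equivalence relation $R$ and show that the set $P_R$ of distinct classes $[a]_R$ is a partition. Each class is nonempty since $a \in [a]_R$ by reflexivity, and the classes cover $X$ because $x \in [x]_R$ for every $x \in X$. Disjointness of distinct classes I obtain through the standard ``equal or disjoint'' dichotomy: assuming $[a]_R \cap [b]_R \neq \varnothing$, I select $c$ in the intersection, use symmetry and transitivity to derive $\langle a, b\rangle \in R$, and then conclude $[a]_R = [b]_R$ by a further application of transitivity; thus two classes that are not identical cannot overlap.

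This is a foundational result carrying no genuine obstacle; the only point demanding a little care is the interplay between disjointness of the blocks and transitivity in the forward direction, together with its mirror image, the ``equal or disjoint'' dichotomy for equivalence classes, in the converse.
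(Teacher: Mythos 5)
Your proof is correct and complete. The paper does not prove this proposition itself---it simply cites Kuratowski and Mostowski---and your argument is exactly the standard textbook proof that this citation stands in for: all the required checks (reflexivity, symmetry, transitivity via disjointness of blocks, identification of the classes with the blocks, and the ``equal or disjoint'' dichotomy for equivalence classes) are carried out correctly.
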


For the proof, see, for example, \cite[Chapter~II, \S{}~5]{KurMost}.

\begin{lemma}[{\cite[p.~9]{Kel1975S}}]\label{l2.6}
Let \(X\) be a nonempty set. If \(R\) is an equivalence relation on \(X\) and \(P_R = \{X_j \colon j \in J\}\) is the corresponding partition of \(X\), then the equality
\begin{equation*}
R = \bigcup_{j \in J} X_j^2
\end{equation*}
holds.
\end{lemma}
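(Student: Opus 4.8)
The plan is to prove the set equality $R = \bigcup_{j \in J} X_j^2$ by establishing the two inclusions separately, using only the defining properties of an equivalence relation together with the description of the blocks $X_j$ supplied by Proposition~\ref{p2.5} and the definition~\eqref{e1.1} of an equivalence class. Recall that, by that proposition, the blocks of $P_R$ are precisely the distinct classes $[a]_R$, $a \in X$.

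First I would verify the inclusion $R \subseteq \bigcup_{j \in J} X_j^2$. Fix $\<x, y> \in R$ and show that $x$ and $y$ share a common block. By reflexivity $x \in [x]_R$, and by symmetry $\<x, y> \in R$ gives $\<y, x> \in R$, so $y \in [x]_R$ according to~\eqref{e1.1}. Hence both points lie in the single class $[x]_R$, which is one of the blocks $X_j$, and therefore $\<x, y> \in X_j^2 \subseteq \bigcup_{j \in J} X_j^2$.

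For the reverse inclusion $\bigcup_{j \in J} X_j^2 \subseteq R$, I would take $\<x, y> \in X_j^2$ for some $j \in J$ and write the block as $X_j = [a]_R$ for an appropriate $a \in X$. The memberships $x \in [a]_R$ and $y \in [a]_R$ give $\<x, a> \in R$ and $\<y, a> \in R$; applying symmetry to the second relation and then transitivity to $\<x, a>$ and $\<a, y>$ produces $\<x, y> \in R$. The two inclusions together yield the asserted equality.

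I do not expect any genuine obstacle: the lemma is a direct unwinding of reflexivity, symmetry, and transitivity. The only point demanding mild attention is the one-sided convention in~\eqref{e1.1}, which fixes $[a]_R = \{x \colon \<x, a> \in R\}$; one must insert an application of symmetry at exactly the right place in each inclusion so that the transitivity step has the correct orientation. Disjointness of distinct blocks could be invoked as well, but it is not needed for either inclusion.
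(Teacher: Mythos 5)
Your proof is correct: both inclusions are verified carefully, and you handle the one-sided convention of \eqref{e1.1} properly by inserting symmetry exactly where the transitivity step needs the pair $\<x,a>$, $\<a,y>$ rather than $\<x,a>$, $\<y,a>$. The paper itself gives no proof of this lemma (it is quoted from Kelley's \emph{General Topology}, p.~9), so there is nothing to compare against; your elementary two-inclusion argument is the standard verification one would find in that reference.
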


For every partition \(P = \{X_j \colon j \in J\}\) of a nonempty set \(X\) we define a partition \(P \otimes P^1\) of \(X^{2}\) by the rule: 
\begin{itemize}
\item A subset \(B\) of \(X^{2}\) is a block of \(P \otimes P^1\) if and only if either 
\[
B = \bigcup_{j \in J} X_{j}^{2}
\]
or there are \emph{distinct} \(j_1\), \(j_2 \in J\) such that
\[
B = X_{j_1} \times X_{j_2}.
\]
\end{itemize}

\begin{definition}\label{d2.8}
Let \(X\) be a nonempty set and let \(P_1\) and \(P_2\) be partitions of \(X\). The partition \(P_{1}\) is \emph{finer} than the partition \(P_{2}\) if the inclusion 
\[
[x]_{R_{P_1}} \subseteq [x]_{R_{P_2}}
\]
holds for every \(x \in X\), where \(R_{P_1}\) and \(R_{P_2}\) are equivalence relations corresponding to \(P_1\) and \(P_2\) respectively.
\end{definition}

If \(P_1\) is finer than \(P_2\), then we say that \(P_{1}\) is a \emph{refinement} of \(P_{2}\).

The following proposition gives us a new characterization of \(a_0\)-coherent mappings.

\begin{theorem}\label{p2.10}
Let \(X\) be a nonempty set, \(\Phi\) be a mapping with \(\dom \Phi= X^{2}\) and let \(a_0 \in \Phi(X^{2})\). Then \(\Phi\) is \(a_0\)-coherent if and only if the fiber 
\[
R := \Phi^{-1}(a_0)
\]
is an equivalence relation on \(X\) and the partition \(P_R \otimes P_R^1\) of \(X^{2}\) is a refinement of the partition \(P_{\Phi^{-1}} := \{\Phi^{-1}(b) \colon b \in \Phi(X^{2})\}\), where \(P_R\) is a partition of \(X\) whose blocks are the equivalence classes of \(R\).
\end{theorem}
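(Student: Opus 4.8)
The plan is to reduce the statement to a purely combinatorial assertion about where $\Phi$ is constant, and then read off both implications from it. Since $a_0$-coherence forces $R = \Phi^{-1}(a_0)$ to be an equivalence relation (Remark~\ref{r2.3}), and the right-hand condition assumes this outright, I would fix from the start the partition $P_R = \{X_j \colon j \in J\}$ into the equivalence classes of $R$ and prove, under the standing hypothesis that $R$ is an equivalence relation, that $a_0$-coherence is equivalent to the refinement condition.

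The first key step is to reformulate strong consistency in terms of constancy of $\Phi$. Unwinding the definition~\eqref{e2.5}, the membership $\langle x_1, x_2\rangle \in R$ says exactly that $x_1$ and $x_2$ lie in a common block $X_j$; hence strong consistency with $R$ asserts precisely that the value $\Phi(x_1, x_3)$ depends only on the blocks containing $x_1$ and $x_3$. In other words, $\Phi$ is $a_0$-coherent if and only if $\Phi$ is constant on every \emph{product block} $X_{j_1} \times X_{j_2}$, $j_1, j_2 \in J$ (here the two indices may coincide). This is a routine quantifier rearrangement; alternatively one can obtain it from Proposition~\ref{p2.7}, since $R \circ \Phi^{-1}(b) \circ R$ is exactly the saturation of $\Phi^{-1}(b)$ by the product blocks.

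Next I would translate the refinement condition. By the definition of $P_R \otimes P_R^1$ together with Lemma~\ref{l2.6}, its blocks are the single set $R = \bigcup_{j\in J} X_j^2$ and the off-diagonal products $X_{j_1} \times X_{j_2}$ with $j_1 \ne j_2$; and refinement of $P_{\Phi^{-1}}$ (Definition~\ref{d2.8}) means exactly that $\Phi$ is constant on each of these blocks. The decisive observation is that constancy of $\Phi$ on the block $R$ is automatic: by Lemma~\ref{l2.6} we have $R = \bigcup_{j\in J} X_j^2 = \Phi^{-1}(a_0)$, so $\Phi \equiv a_0$ there, and in particular $\Phi$ is constant on each diagonal product block $X_j^2$. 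Therefore the refinement condition is equivalent to constancy of $\Phi$ on all the off-diagonal blocks alone. Comparing with the previous step, strong consistency demands constancy on all product blocks while refinement demands it only on the off-diagonal ones; but the diagonal blocks are precisely the ones on which constancy holds for free, so the two conditions coincide, and both implications follow.

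The step needing the most care is this last comparison, because passing to $P_R \otimes P_R^1$ collapses all the diagonal squares $X_j^2$ into the single block $R$, so at first glance the refinement condition looks strictly weaker than full strong consistency. The point to nail down is that this apparent loss is illusory: the merged block $R$ is literally a fiber of $\Phi$, so no information about $\Phi$ on the diagonal is ever at stake, and the equivalence is exact. Once this is made precise, the forward direction (coherence $\Rightarrow$ refinement) and the reverse direction (refinement together with $R$ being an equivalence relation $\Rightarrow$ coherence) are immediate.
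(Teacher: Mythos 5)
Your proof is correct, and it reaches the theorem by a somewhat different route than the paper. The paper argues the two implications separately, both times through the composition identity of Proposition~\ref{p2.7}: in the forward direction it checks block by block that every block of \(P_R \otimes P_R^1\) lies in a fiber, using \(\Phi^{-1}(b) = R \circ \Phi^{-1}(b) \circ R\) to saturate an off-diagonal block \(X_{j_1}\times X_{j_2}\) starting from a single representative pair; in the converse direction it reduces the inclusion \(R \circ \Phi^{-1}(b) \circ R \subseteq \Phi^{-1}(b)\) to singletons and uses the identity \(B = R \circ B \circ R\), valid for every block \(B\) of \(P_R \otimes P_R^1\). You instead translate both sides of the equivalence into one common statement --- constancy of \(\Phi\) on products of \(R\)-classes --- by directly unwinding~\eqref{e2.5}: strong consistency with an equivalence relation \(R\) says precisely that \(\Phi(x,y)\) depends only on the pair of classes of \(x\) and \(y\). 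Combined with Lemma~\ref{l2.6}, which identifies the merged diagonal block \(\bigcup_{j\in J} X_j^{2}\) with the fiber \(\Phi^{-1}(a_0)\) (so constancy there, and a fortiori on each \(X_j^{2}\), holds for free), both implications collapse into a single chain of equivalences; the one delicate point --- that merging all the \(X_j^{2}\) into a single block of \(P_R \otimes P_R^1\) loses no information --- is exactly the point you isolate and justify. What the paper's route buys is internal economy: Proposition~\ref{p2.7} is already established, and its composition form is the one reused elsewhere (e.g., in Corollary~\ref{c2.5}). What your route buys is a shorter, direction-symmetric argument with no relational-composition bookkeeping and a more transparent reason why the refinement condition is not weaker than coherence. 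Finally, your opening reduction --- that both sides of the equivalence force \(R\) to be an equivalence relation, so this may be assumed throughout --- agrees with how the paper itself reads Definition~\ref{d2.5} and Remark~\ref{r2.3} (its own proof begins the same way), so no gap arises there.
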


\begin{proof}
Let \(\Phi\) be \(a_0\)-coherent. Then, by Definition~\ref{d2.5}, \(R\) is an equivalence relation on \(X\). We claim that \(P_R \otimes  P_R^{1}\) is a refinement \(P_{\Phi^{-1}}\). It suffices to show that for every block \(B_0\) of \(P_R \otimes  P_R^{1}\) there is \(b_0 \in \Phi(X^{2})\) such that
\begin{equation}\label{p2.10:e1}
B_0 \subseteq \Phi^{-1}(b_0).
\end{equation}
Suppose that 
\begin{equation}\label{p2.10:e2}
B_0 = \bigcup_{j \in J} X_j^{2},
\end{equation}
where \(X_j\), \(j \in J\), are the blocks of the partition corresponding to the equivalence relation \(\Phi^{-1}(a_0)\) on \(X\). By Lemma~\ref{l2.6}, we have the equality
\[
\bigcup_{j \in J} X_j^{2} = \Phi^{-1}(a_0).
\]
The last equality and \eqref{p2.10:e2} imply \eqref{p2.10:e1} with \(b_0 = a_0\). If \(B_0\) is a block of \(P_R \otimes  P_R^{1}\) but \eqref{p2.10:e2} does not hold, then there are two distinct \(j_1\), \(j_2 \in J\) such that
\begin{equation}\label{p2.10:e3}
B_0 = X_{j_1} \times X_{j_2}.
\end{equation}
Let \(x_1 \in X_{j_1}\) and \(x_2 \in X_{j_2}\) and let \(b_0 \in \Phi(X^{2})\) such that
\begin{equation}\label{p2.10:e4}
\<x_1, x_2> \in \Phi^{-1}(b_0).
\end{equation}
We must show that 
\begin{equation}\label{p2.10:e5}
X_{j_1} \times X_{j_2} \subseteq \Phi^{-1}(b_0).
\end{equation}
It follows from Proposition~\ref{p2.7} and Lemma~\ref{l2.6} that
\begin{equation}\label{p2.10:e6}
\Phi^{-1}(b_0) = \left(\bigcup_{j \in J} X_j^{2}\right) \circ \Phi^{-1}(b_0) \circ \left(\bigcup_{j \in J} X_j^{2}\right)
\end{equation}
holds. Inclusion~\eqref{p2.10:e5} holds if, for every \(x \in X_{j_1}\) and \(y \in X_{j_2}\), we have
\[
\<x, y> \in \Phi^{-1}(b_0).
\]
Using \eqref{p2.10:e6} we obtain
\begin{equation}\label{p2.10:e7}
\Phi^{-1}(b_0) \supseteq X_{j_1}^{2} \circ \Phi^{-1}(b_0) \circ X_{j_2}^{2}.
\end{equation}
Since \(\<x, x_1> \in X_{j_1}^{2}\) and \(\<x_1, x_2> \in \Phi^{-1}(b_0)\) and \(\<x_2, y> \in X_{j_2}^{2}\), the definition of composition \(\circ\) and \eqref{p2.10:e7} imply \(\<x, y> \in \Phi^{-1}(b_0)\). Thus, \(P_R \otimes  P_R^{1}\) is a refinement of \(P_{\Phi^{-1}}\) if \(\Phi\) is \(a_0\)-coherent.

Conversely, suppose that \(R = \Phi^{-1}(a_0)\) is an equivalence relation on \(X\) and \(P_R \otimes  P_R^{1}\) is a finer than \(P_{\Phi^{-1}}\). By Proposition~\ref{p2.7}, the mapping \(\Phi\) is \(a_0\)-coherent if and only if the equality
\[
R \circ \Phi^{-1}(b) \circ R = \Phi^{-1}(b)
\]
holds for every \(b \in \Phi(X^{2})\). The reflexivity of \(R\) implies that 
\[
R \circ \Phi^{-1}(b) \circ R \supseteq \Phi^{-1}(b).
\]
Consequently, to complete the proof it suffices to show that
\begin{equation}\label{p2.10:e8}
R \circ \Phi^{-1}(b) \circ R \subseteq \Phi^{-1}(b)
\end{equation}
holds for every \(b \in \Phi(X^{2})\). Inclusion~\eqref{p2.10:e8} holds if and only if 
\begin{equation}\label{p2.10:e9}
R \circ \{\<x, y>\} \circ R \subseteq \Phi^{-1}(b)
\end{equation}
holds for every \(\<x, y> \in \Phi^{-1}(b)\), where \(\{\<x, y>\}\) is the one-point subset of \(X^{2}\) consisting the point \(\<x, y>\) only. A simple calculation shows that
\begin{equation}\label{p2.10:e10}
B = R \circ B \circ R
\end{equation}
holds for every block \(B\) of the partition \(P_R \otimes  P_R^{1}\). Since \(P_R \otimes  P_R^{1}\) is a refinement of \(P_{\Phi^{-1}}\), equality \eqref{p2.10:e10} implies \eqref{p2.10:e9} for \(\<x, y> \in B\).
\end{proof}

Let us consider now some examples.

\begin{proposition}\label{c2.2}
Let \(X\) be a nonempty set and let \(d \colon X^{2} \to \RR^{+}\) be a pseudoultrametric on \(X\). Then \(d^{-1}(0)\) is an equivalence relation on \(X\) and \(d\) is \(0\)-coherent.
\end{proposition}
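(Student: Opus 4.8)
The plan is to verify the two assertions separately, drawing directly on the axioms of a pseudoultrametric from Definition~\ref{ch2:d2}. Note first that \(0 \in d(X^{2})\), since \(d(x,x)=0\) for every \(x \in X\), so that the fiber \(d^{-1}(0)\) is nonempty and the notion of \(0\)-coherence is meaningful for \(d\).

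To see that \(R := d^{-1}(0)\) is an equivalence relation, I would check the three defining properties in turn. Reflexivity is immediate from \(d(x,x)=0\), and symmetry follows at once from the symmetry \(d(x,y)=d(y,x)\). Only transitivity uses the strong triangle inequality: if \(d(x,y)=0\) and \(d(y,z)=0\), then \(d(x,z) \leq \max\{d(x,y),d(y,z)\} = 0\), whence \(d(x,z)=0\) because \(d\) takes values in \(\RR^{+}\).

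It remains to prove that \(d\) is \(0\)-coherent, which by Definition~\ref{d2.5} means that \(d\) is strongly consistent with \(R\); that is, I must establish implication~\eqref{e2.5} with \(\Phi = d\) and \(a_0 = 0\). So I would assume \(\<x_1, x_2> \in R\) and \(\<x_3, x_4> \in R\), i.e.\ \(d(x_1,x_2)=0\) and \(d(x_3,x_4)=0\), and aim to show \(d(x_1,x_3)=d(x_2,x_4)\). The inequality \(d(x_1,x_3) \leq d(x_2,x_4)\) follows by applying the strong triangle inequality twice, first inserting the vertex \(x_2\) and discarding the term \(d(x_1,x_2)=0\), then inserting the vertex \(x_4\) and discarding the term \(d(x_4,x_3)=d(x_3,x_4)=0\). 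The reverse inequality \(d(x_2,x_4) \leq d(x_1,x_3)\) is obtained by the same argument with the two pairs interchanged, and the two inequalities together give the required equality.

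The computational heart of the argument, and the step I would treat most carefully, is this last double application of the strong triangle inequality: one must arrange the inserted vertices and the vanishing links so that each chain uses exactly one hypothesis from each pair, thereby forcing the two cross-distances to coincide rather than merely to be comparable. No deeper obstacle arises, and the symmetry of \(d\) guarantees that the bookkeeping \(d(x_4,x_3)=d(x_3,x_4)\) and \(d(x_2,x_1)=d(x_1,x_2)\) needed to close the chains is available.
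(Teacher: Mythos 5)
Your proof is correct, but it takes a genuinely different route from the paper. The paper proves nothing directly: it observes that Proposition~\ref{c2.2} is a corollary of the corresponding classical result for pseudometrics, citing \cite[Ch.~4, Th.~15]{Kel1975S}, since every pseudoultrametric is in particular a pseudometric. Your argument is instead a self-contained verification from Definition~\ref{ch2:d2} and Definition~\ref{d2.5}: reflexivity and symmetry of \(R = d^{-1}(0)\) are read off the axioms, transitivity comes from the strong triangle inequality, and strong consistency follows from the two chains
\[
d(x_1,x_3)\leqslant\max\{d(x_1,x_2),d(x_2,x_3)\}=d(x_2,x_3)\leqslant\max\{d(x_2,x_4),d(x_4,x_3)\}=d(x_2,x_4)
\]
and its mirror image with the roles of the pairs \(\<x_1,x_2>\) and \(\<x_3,x_4>\) swapped; all steps check out. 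It is worth noticing that in every place where you invoke the strong triangle inequality, the discarded term is \(0\), so the identical bookkeeping with \(+\) in place of \(\max\) proves the pseudometric statement; in effect you have reproved, in the special case at hand, exactly the result the paper cites. What each approach buys: the paper's one-line reduction is shorter and situates the proposition inside a classical framework, while yours is elementary, self-contained, and makes transparent that nothing specific to ultrametrics (beyond the ordinary triangle inequality) is actually needed.
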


This proposition is a corollary of the corresponding result for pseudometrics \cite[Ch.~4, Th.~15]{Kel1975S}.

\begin{definition}\label{d2.9}
Let \((X_1, d_1)\) and \((X_2, d_2)\) be pseudoultrametric spaces. A bijection \(\Phi \colon X_1 \to X_2\) is a weak similarity if there is a strictly increasing bijective function \(f \colon d_1(X_{1}^{2}) \to d_2(X_{2}^{2})\) such that the equality
\begin{equation}\label{d2.9:e1}
d_1(x, y) = f(d_2(\Phi(x), \Phi(y)))
\end{equation}
holds for all \(x\), \(y \in X_1\).
\end{definition}

\begin{remark}\label{r2.10}
The weak similarities of semimetric spaces and ultrametric ones were studied in~\cite{DP2013AMH} and \cite{P2018(p-Adic)}. See also~\cite{KvL2014} and references therein for some results related to weak similarities of subsets of Euclidean finite-dimensional spaces.
\end{remark}

\begin{proposition}\label{p2.11}
Let \((X_1, d_1)\) and \((X_2, d_2)\) be pseudoultrametric spaces and \(\Phi \colon X_1 \to X_2\) be a weak similarity. Then \(\Phi\) is a combinatorial similarity for the pseudoultrametrics \(d_1\) and \(d_2\).
\end{proposition}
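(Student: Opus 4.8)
The plan is to show that the very data exhibiting $\Phi$ as a weak similarity already exhibit it as a combinatorial similarity, so that the proof is a matter of matching Definition~\ref{d2.9} against Definition~\ref{d2.17} and discarding the surplus monotonicity. First I would record what Definition~\ref{d2.9} provides: a bijection $\Phi \colon X_1 \to X_2$ together with a strictly increasing bijection $f$ between the distance ranges $d_1(X_1^2)$ and $d_2(X_2^2)$ for which $d_1(x, y) = f(d_2(\Phi(x), \Phi(y)))$ holds for all $x$, $y \in X_1$.

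Next I would line up the roles. Comparing the phrase ``$\Phi$ is a combinatorial similarity for the pseudoultrametrics $d_1$ and $d_2$'' with the template of Definition~\ref{d2.17} (``$g \colon Y \to X$ is a combinatorial similarity for the mappings $\Psi$ and $\Phi$''), I would take $\Psi = d_1$, so that $Y = X_1$; the inner mapping to be $d_2$, so that $X = X_2$; and $g = \Phi \colon X_1 \to X_2$. Under this identification the commuting relation \eqref{d2.17:e1} required by Definition~\ref{d2.17} reads $d_1(x, y) = f(d_2(\Phi(x), \Phi(y)))$ for all $x$, $y \in X_1$, with $f$ a bijection of $d_2(X_2^2)$ onto $d_1(X_1^2)$. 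This is exactly the functional equation recorded in the previous paragraph.

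It then remains only to check the two bijectivity demands of Definition~\ref{d2.17}. The map $g = \Phi$ is a bijection because every weak similarity is a bijection by Definition~\ref{d2.9}, and $f$ is a bijection between the two distance ranges because a strictly increasing map between linearly ordered sets is injective, so a strictly increasing bijection is in particular a bijection. Since the strong triangle inequality is never used, I would note that the pseudoultrametric hypothesis plays no role in this implication; the only genuine content is that combinatorial similarity asks merely for a bijection $f$ while weak similarity asks for a strictly increasing one, and we simply forget the order structure. The one point demanding care — and the only place an error could creep in — is the bookkeeping that aligns the inner and outer roles of $d_1$ and $d_2$ in Definition~\ref{d2.17}, and hence the orientation of $f$, with the orientation of the weak-similarity equation; one takes $f$ or its inverse accordingly, each being a strictly increasing bijection and therefore a bijection.
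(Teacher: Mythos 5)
Your proposal is correct and is essentially the paper's own proof: the paper simply observes that the claim ``follows directly from Definition~\ref{d2.9} and Definition~\ref{d2.17},'' which is exactly the definition-matching you carry out (including the correct identification $g=\Phi$, $\Psi=d_1$, inner mapping $d_2$, and $f$ a bijection between the distance ranges). Your explicit handling of the orientation of $f$ is a sensible precaution, since the declared direction of $f$ in Definition~\ref{d2.9} and the direction forced by equation~\eqref{d2.9:e1} differ, and passing to $f^{-1}$ (still a strictly increasing bijection) resolves it.
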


\begin{proof}
It follows directly from Definition~\ref{d2.9} and Definition~\ref{d2.17}.
\end{proof}

\section{Combinatorial similarity for generalized ultrametrics}

First of all, we recall a combinatorial characterization of arbitrary pseudometric.

\begin{theorem}[{\cite{Dov2019a}}]\label{ch2:p7}
Let \(X\) be a nonempty set. The following conditions are equivalent for every mapping \(\Phi\) with \(\dom\Phi = X^{2}\).
\begin{enumerate}
\item\label{ch2:p7:s1} \(\Phi\) is combinatorially similar to a pseudometric.
\item\label{ch2:p7:s2} \(\Phi\) is symmetric, and \(|\Phi(X^{2})| \leqslant 2^{\aleph_{0}}\), and there is \(a_0 \in \Phi(X^{2})\) such that \(\Phi\) is \(a_0\)-coherent.
\end{enumerate}
\end{theorem}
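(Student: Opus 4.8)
The plan is to establish the two implications separately. The implication \ref{ch2:p7:s1} $\Rightarrow$ \ref{ch2:p7:s2} is a routine transfer of structure along the similarity, while \ref{ch2:p7:s2} $\Rightarrow$ \ref{ch2:p7:s1} requires building a pseudometric by hand, and it is here that the real work lies.

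For \ref{ch2:p7:s1} $\Rightarrow$ \ref{ch2:p7:s2}, suppose $\Phi$ is combinatorially similar to a pseudometric $d \colon Y^{2} \to \RR^{+}$, with bijections $f \colon \Phi(X^{2}) \to d(Y^{2})$ and $g \colon Y \to X$ as in Definition~\ref{d2.17}. The cardinality bound is immediate, since $f$ is a bijection and $d(Y^{2}) \subseteq \RR^{+}$ gives $|\Phi(X^{2})| = |d(Y^{2})| \leqslant |\RR| = 2^{\aleph_{0}}$. Symmetry of $\Phi$ follows from that of $d$ together with the injectivity of $f$: the equality $d(x,y) = d(y,x)$ yields $\Phi(g(x), g(y)) = \Phi(g(y), g(x))$, and since $g$ is surjective, $\Phi$ is symmetric. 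Finally, a pseudometric is $0$-coherent (the pseudometric analogue of Proposition~\ref{c2.2}, cf.\ \cite[Ch.~4, Th.~15]{Kel1975S}); since combinatorial similarity is a symmetric relation (interchange $f, g$ with $f^{-1}, g^{-1}$), Proposition~\ref{p2.4} applied to the inverse similarity shows that $\Phi$ is $f^{-1}(0)$-coherent, and we take $a_0 = f^{-1}(0)$.

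For \ref{ch2:p7:s2} $\Rightarrow$ \ref{ch2:p7:s1}, I would realize the pseudometric on $X$ itself, taking $Y = X$ and $g = \mathrm{id}_{X}$, so that the problem reduces to choosing an injection $f \colon \Phi(X^{2}) \to \RR^{+}$ for which $d := f \circ \Phi$ is a pseudometric. Write $R := \Phi^{-1}(a_0)$; by $a_0$-coherence (Remark~\ref{r2.3}) $R$ is an equivalence relation, and its reflexivity forces $\Phi(x, x) = a_0$ for every $x \in X$. Using $|\Phi(X^{2})| \leqslant 2^{\aleph_{0}} = |[1, 2]|$, I would pick $f$ with $f(a_0) = 0$ mapping $\Phi(X^{2}) \setminus \{a_0\}$ injectively into the interval $[1, 2]$. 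Then $d(x, x) = f(a_0) = 0$, the map $d$ is symmetric because $\Phi$ is, and $f$ is a bijection of $\Phi(X^{2})$ onto $d(X^{2}) = f(\Phi(X^{2}))$, so that \eqref{d2.17:e1} holds with $g = \mathrm{id}_{X}$.

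The crux is the triangle inequality $d(x, y) \leqslant d(x, z) + d(z, y)$, and this is the step I expect to be the main obstacle. If $d(x, z)$ and $d(z, y)$ are both positive they lie in $[1, 2]$, so the right-hand side is at least $2 \geqslant d(x, y)$; if both vanish then $\<x, z>, \<z, y> \in R$ and transitivity gives $\<x, y> \in R$, so $d(x, y) = 0$. The delicate case is the mixed one, say $d(z, y) = 0$ and $d(x, z) > 0$: here $\<z, y> \in R$, and strong consistency \eqref{e2.5} applied with $\<x, x> \in R$ (reflexivity) and $\<z, y> \in R$ yields $\Phi(x, z) = \Phi(x, y)$, whence $d(x, y) = d(x, z) \leqslant d(x, z) + d(z, y)$; the case $d(x, z) = 0$ is symmetric. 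Thus coherence is exactly what collapses the mixed case, while the cardinality hypothesis is used only to provide room in $[1, 2]$ for an injective relabelling of the nonzero values of $\Phi$.
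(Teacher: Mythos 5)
Your proof is correct. Note that the paper itself gives no proof of Theorem~\ref{ch2:p7}: it is imported verbatim from \cite{Dov2019a}, so there is no in-paper argument to compare against; judged on its own merits, your argument is complete. The forward direction correctly transfers symmetry (injectivity of \(f\) plus surjectivity of \(g\)), the cardinality bound, and coherence (pseudometrics are \(0\)-coherent by the triangle inequality, and combinatorial similarity is symmetric, so Proposition~\ref{p2.4} applied to the inverse similarity yields \(f^{-1}(0)\)-coherence of \(\Phi\)). The reverse direction hinges on the range trick \(f\bigl(\Phi(X^{2})\bigr) \subseteq \{0\} \cup [1,2]\), which makes the triangle inequality automatic when both summands are positive, reduces the two-zeros case to transitivity of \(R = \Phi^{-1}(a_0)\), and reduces the mixed case to strong consistency via the pairs \(\<x,x> \in R\) and \(\<z,y> \in R\); each of these steps checks out, and this is exactly where the hypotheses \(|\Phi(X^{2})| \leqslant 2^{\aleph_{0}}\) and \(a_0\)-coherence are consumed. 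One point worth making explicit: the reflexivity of \(R\), which you use to get \(\Phi(x,x) = a_0\) and to run the mixed case, is not derivable from implication~\eqref{e2.5} alone; it comes from the paper's convention (Remark~\ref{r2.3}, or equivalently the fact that strong consistency is only defined for equivalence relations) that \(a_0\)-coherence presupposes \(\Phi^{-1}(a_0)\) is an equivalence relation --- you do cite Remark~\ref{r2.3}, so your use is legitimate, but the dependence on that convention deserves the footnote.
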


\begin{corollary}[{\cite{Dov2019a}}]\label{c3.2}
Let \(X\) be a nonempty set and let \(\Phi\) be a mapping with \(\dom\Phi = X^{2}\). Then \(\Phi\) is combinatorially similar to a metric if and only if \(\Phi\) is symmetric, and \(|\Phi(X^{2})| \leqslant 2^{\aleph_{0}}\), and there is \(a_0 \in \Phi(X^{2})\) such that \(\Phi^{-1}(a_0) = \Delta_{X}\), where \(\Delta_{X}\) is the diagonal of \(X\).
\end{corollary}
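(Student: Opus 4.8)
The plan is to deduce this from Theorem~\ref{ch2:p7}, treating the metric case as the \emph{positive} refinement of the pseudometric case. The only axiom separating a metric from a pseudometric is the positive property \(d(x,y)=0 \Leftrightarrow x=y\), i.e.\ \(d^{-1}(0)=\Delta_{Y}\); so the whole argument hinges on translating this single condition through a combinatorial similarity into the requirement \(\Phi^{-1}(a_{0})=\Delta_{X}\) for the value \(a_{0}\) that corresponds to \(0\).

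For necessity, suppose \(\Phi\) is combinatorially similar to a metric \(d\colon Y^{2}\to\RR^{+}\), with bijections \(f\colon\Phi(X^{2})\to d(Y^{2})\) and \(g\colon Y\to X\) as in Definition~\ref{d2.17}. Since every metric is a pseudometric, Theorem~\ref{ch2:p7} already yields that \(\Phi\) is symmetric and \(|\Phi(X^{2})|\leqslant 2^{\aleph_{0}}\). It remains to exhibit the required \(a_{0}\). I would set \(a_{0}:=f^{-1}(0)\), which is legitimate because \(0=d(y,y)\in d(Y^{2})\) and \(f\) is a bijection. Writing an arbitrary pair in \(X^{2}\) as \(\langle g(x),g(y)\rangle\) (possible since \(g\) is onto), the chain
\[
\Phi(g(x),g(y))=a_{0} \iff d(x,y)=0 \iff x=y \iff g(x)=g(y)
\]
uses only \(f(a_{0})=0\), the defining relation of the similarity, the positivity of \(d\), and the injectivity of \(g\); it gives \(\Phi^{-1}(a_{0})=\Delta_{X}\).

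For sufficiency, assume \(\Phi\) is symmetric, \(|\Phi(X^{2})|\leqslant 2^{\aleph_{0}}\), and \(\Phi^{-1}(a_{0})=\Delta_{X}\) for some \(a_{0}\in\Phi(X^{2})\). First I note that \(R:=\Phi^{-1}(a_{0})=\Delta_{X}\) is an equivalence relation and that strong consistency with the diagonal holds trivially for any mapping: in implication~\eqref{e2.5} the hypothesis forces \(x_{1}=x_{2}\) and \(x_{3}=x_{4}\), so the conclusion is an identity. Hence \(\Phi\) is \(a_{0}\)-coherent, and Theorem~\ref{ch2:p7} provides a pseudometric \(d\colon Y^{2}\to\RR^{+}\) to which \(\Phi\) is combinatorially similar, with associated \(f\) and \(g\). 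The crux — and the only step that is not purely formal — is to show that this \(d\) is in fact a metric. I would transport the fiber condition through the similarity: for \(x,y\in Y\),
\[
d(x,y)=f(a_{0}) \iff \Phi(g(x),g(y))=a_{0} \iff g(x)=g(y) \iff x=y,
\]
so that \(d^{-1}(f(a_{0}))=\Delta_{Y}\). Taking \(x=y\) here forces \(d(y,y)=f(a_{0})\), while \(d(y,y)=0\) because \(d\) is a pseudometric; therefore \(f(a_{0})=0\) and \(d^{-1}(0)=\Delta_{Y}\). This is exactly the positive property, so \(d\) is a metric and \(\Phi\) is combinatorially similar to it.

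The argument is short, and the main conceptual point to get right is the final deduction \(f(a_{0})=0\): one must observe that the transported condition \(d^{-1}(f(a_{0}))=\Delta_{Y}\), evaluated on the diagonal, is incompatible with \(f(a_{0})\neq 0\) precisely because a pseudometric vanishes on the diagonal. Everything else is a formal bookkeeping of the bijections, which could alternatively be packaged using Proposition~\ref{p2.4}.
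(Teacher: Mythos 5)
Your proof is correct. The paper offers no proof of Corollary~\ref{c3.2} --- it is imported from \cite{Dov2019a} alongside Theorem~\ref{ch2:p7} --- and your argument is exactly the natural derivation one would expect: necessity transports the fiber \(d^{-1}(0)\) through the similarity to get \(\Phi^{-1}(a_{0})=\Delta_{X}\), and sufficiency uses the trivial fact that every mapping is strongly consistent with \(\Delta_{X}\) to invoke Theorem~\ref{ch2:p7}, then pins down \(f(a_{0})=0\) by evaluating the transported fiber condition on the diagonal against \(d(y,y)=0\).
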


Consequently, if a mapping \(\Phi\), with \(\dom \Phi = X^{2}\), is combinatorially similar to a pseudoultrametric, then it satisfies condition \((ii)\) of Theorem~\ref{ch2:p7}.

Another necessary condition for combinatorial similarity of \(\Phi\) to a pseudoultrametric follows from the fact that
\begin{itemize}
\item all triangles are isosceles in every pseudoultrametric space.
\end{itemize}

This fact can be written in the form.

\begin{lemma}\label{l3.1}
Let \(X\) be a nonempty set and let \(\Phi\) be a mapping with \(\dom \Phi = X^{2}\). If \(\Phi\) is combinatorially similar to a pseudoultrametric, then,
\begin{enumerate}
\item[\((i)\)] for every triple \(\<x_1, x_2, x_3>\) of points of \(X\), there is a permutation
\[
\begin{pmatrix}
x_1 & x_2 & x_3\\
x_{i_1} & x_{i_2} & x_{i_3}
\end{pmatrix}
\]
such that \(\Phi(x_{i_1}, x_{i_2}) = \Phi(x_{i_2}, x_{i_3})\).
\end{enumerate}
\end{lemma}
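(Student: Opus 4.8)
The plan is to transport the classical ``all triangles are isosceles'' property from the target pseudoultrametric back to $\Phi$ across the combinatorial similarity.

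First I would unwind Definition~\ref{d2.17}. By hypothesis there is a pseudoultrametric space $(Y, d)$ together with bijections $f \colon \Phi(X^2) \to d(Y^2)$ and $g \colon Y \to X$ such that
\[
d(u, v) = f\bigl(\Phi(g(u), g(v))\bigr)
\]
holds for all $u, v \in Y$. The two features of this data I will actually use are that $f$ is injective and that $g$ is a bijection, so that $g^{-1}$ is defined on all of $X$.

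The geometric heart of the argument is the isosceles property of pseudoultrametrics. For any three points $u, v, w \in Y$ the strong triangle inequality (Definition~\ref{ch2:d2}) forbids a strict maximum among the three numbers $d(u, v)$, $d(v, w)$, $d(u, w)$: if, say, $d(u, v)$ were strictly larger than both of the others, then $d(u, v) \leqslant \max\{d(u, w), d(w, v)\} < d(u, v)$, a contradiction. Hence the two largest of these three distances are equal. Since any two sides of the triangle $u, v, w$ share a vertex, the two equal sides have a common vertex; invoking the symmetry of $d$ I can place that vertex in the middle position. Concretely, there is an arrangement $(a, b, c)$ of $(u, v, w)$ with $d(a, b) = d(b, c)$.

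Finally I would pull back the given triple. Put $y_k = g^{-1}(x_k)$ for $k = 1, 2, 3$ and apply the previous step to $y_1, y_2, y_3$: there is a permutation $(i_1, i_2, i_3)$ of $(1, 2, 3)$ with $d(y_{i_1}, y_{i_2}) = d(y_{i_2}, y_{i_3})$. Since $g(y_k) = x_k$, the displayed identity yields
\[
f\bigl(\Phi(x_{i_1}, x_{i_2})\bigr) = d(y_{i_1}, y_{i_2}) = d(y_{i_2}, y_{i_3}) = f\bigl(\Phi(x_{i_2}, x_{i_3})\bigr),
\]
and injectivity of $f$ gives $\Phi(x_{i_1}, x_{i_2}) = \Phi(x_{i_2}, x_{i_3})$, which is precisely condition~(i).

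I expect no serious obstacle here; the only point that needs a moment's care is the middle-vertex bookkeeping, that is, checking that the equal pair of sides---which a priori might be recorded with the common vertex in an end position---can always be rewritten with the common vertex in the central slot by using the symmetry of $d$. Everything else is a direct transport of equalities across the commutative square, relying only on the injectivity of $f$. Degenerate triples with repeated points need no separate treatment, since the isosceles argument uses nothing beyond the strong triangle inequality.
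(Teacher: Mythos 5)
Your proof is correct and follows exactly the route the paper intends: the paper states Lemma~\ref{l3.1} as an immediate reformulation of the fact that all triangles are isosceles in a pseudoultrametric space, transported through the combinatorial similarity, which is precisely what you carry out (the isosceles property via the strong triangle inequality, then pulling the equality back through the bijections $g$ and the injective $f$). The only difference is that you spell out the middle-vertex bookkeeping and the degenerate cases, which the paper leaves implicit.
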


The following example shows that condition~\((i)\) is not sufficient for existence of a pseudoultrametric \(d\) which is combinatorially similar to \(\Phi\), even \(\Phi\) is a metric.

\begin{example}\label{ex3.4}
Let \(X = \{x_1, x_2, x_3, x_4\}\) and let \(\rho \colon X^{2} \to \RR^{+}\) be a symmetric mapping defined as
\begin{equation}\label{ex3.4:e1}
\rho(x, y) = \begin{cases}
0 & \text{if } x = y,\\
\frac{\pi}{2} & \text{if } \{x, y\} = \{x_1, x_2\} \text{ or } \{x, y\} = \{x_2, x_3\},\\
\pi & \text{otherwise}.
\end{cases}
\end{equation}
It is easy to see that \(\rho\) is a metric on \(X\) such that every triangle is isosceles in \((X, \rho)\) (see Figure~\ref{fig1}). Suppose \(\rho\) is combinatorially similar to some pseudoultrametric \(d \colon Y^{2} \to \RR^{+}\). Then, by Definition~\ref{d2.17}, there are bijections \(f \colon \rho(X^{2}) \to d(Y^{2})\) and \(g \colon Y \to X\) such that
\[
d(x, y) = f(\rho(g(x), g(y)))
\]
for all \(x\), \(y \in Y\). The last equality and \eqref{ex3.4:e1} imply
\[
d(g^{-1}(x_1), g^{-1}(x_2)) = d(g^{-1}(x_2), g^{-1}(x_3)) = f\left(\frac{\pi}{2}\right)
\]
and
\begin{equation*}
d(g^{-1}(x_1), g^{-1}(x_4)) = d(g^{-1}(x_4), g^{-1}(x_2)) = f\left(\pi\right).
\end{equation*}
Using these equalities and the strong triangle inequality (for the triples \(\<g^{-1}(x_1), g^{-1}(x_2), g^{-1}(x_3)>\) and \(\<g^{-1}(x_1), g^{-1}(x_4), g^{-1}(x_2)>\)) we obtain
\[
f\left(\frac{\pi}{2}\right) \geqslant f\left(\pi\right) \text{ and } f\left(\frac{\pi}{2}\right) \leqslant f\left(\pi\right).
\]
Thus, \(f\left(\frac{\pi}{2}\right) = f\left(\pi\right)\) holds, contrary to the bijectivity of \(f\).
\end{example}

\begin{figure}[htb]
\begin{tikzpicture}[scale=1]
\draw (1,0) arc (0:360:1cm and 0.5cm);
\draw[arrows={-Stealth[length=10pt]}] (0,2) -- (-1,0);
\draw[arrows={-Stealth[length=10pt]}] (0,2) -- (0,-0.5);
\draw[arrows={-Stealth[length=10pt]}] (0,2) -- (1,0);
\draw [fill=black] (-1,0) circle (2pt) node [label=left:\(x_1\)] {};
\draw [fill=black] (0,-0.5) circle (2pt) node [label=below:\(x_2\)] {};
\draw [fill=black] (1,0) circle (2pt) node [label=right:\(x_3\)] {};
\draw [fill=black] (0,2) circle (2pt) node [label=above:\(x_4\)] {};
\end{tikzpicture}
\caption{The metric space \((X, \rho)\) is (up to isometry) a subspace of the metric space \(L\) consisting of the three rays \(\protect\overrightarrow{x_4x_1}\), \(\protect\overrightarrow{x_4x_2}\), \(\protect\overrightarrow{x_4x_3}\) and a unit circle (a circle with the radius \(1\)) passing through \(x_1\), \(x_2\) and \(x_3\) if we consider \(L\) endowed with the shortest path metric.}
\label{fig1}
\end{figure}
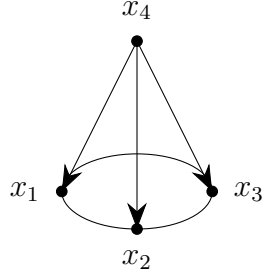

We want to describe the mappings which are combinatorially similar to pseudoultrametrics. For this goal we recall some definitions.

Let \(\gamma\) be a binary relation on a set \(X\). We will write \(\gamma^{1} = \gamma\) and \(\gamma^{n+1} =  \gamma^{n}\circ \gamma\) for every integer \(n \geqslant 1\). The \emph{transitive closure} \(\gamma^{t}\) of \(\gamma\) is the relation
\begin{equation}\label{e3.2}
\gamma^{t} := \bigcup_{n=1}^{\infty} \gamma^{n}.
\end{equation}

For every \(\beta \subseteq X^{2}\), the transitive closure \(\beta^{t}\) is transitive and the inclusion \(\beta \subseteq \beta^{t}\) holds. Moreover, if \(\tau \subseteq X^{2}\) is an arbitrary transitive binary relation for which \(\beta \subseteq \tau\), then we also have \(\beta^{t} \subseteq \tau\), i.e., \(\beta^{t}\) is the smallest transitive binary relation containing \(\beta\).

Recall that a reflexive and transitive binary relation \(\preccurlyeq_Y\) on a set \(Y\) is a \emph{partial order} on \(Y\) if, for all \(x\), \(y \in Y\), we have the \emph{antisymmetric property},
\[
\bigl(\<x, y> \in \preccurlyeq_Y \text{ and } \<y, x> \in \preccurlyeq_Y \bigr) \Rightarrow (x = y).
\]

In what follows we use the formula \(x \preccurlyeq y\) instead of \(\<x, y> \in \preccurlyeq\) and write \(x\prec y\) instead of
\[
x \preccurlyeq y \quad \text{and} \quad x \neq y.
\]

Let \(\preccurlyeq_Y\) be a partial order on a set \(Y\). A pair \((Y, \preccurlyeq_Y)\) is called to be a \emph{poset} (a partially ordered set). A poset \((Y, \preccurlyeq_Y)\) is \emph{linear} (= \emph{totally ordered}) if, for all \(y_1\), \(y_2 \in Y\), we have 
\[
y_1 \preccurlyeq_Y y_2 \quad \text{or} \quad y_2 \preccurlyeq_Y y_1.
\]

\begin{definition}\label{d3.5}
Let \((Q, {\preccurlyeq}_Q)\) and \((L, {\preccurlyeq}_L)\) be posets. A mapping \(f \colon Q \to L\) is \emph{isotone} if, for all \(q_1\), \(q_2 \in Q\), we have
\[
(q_1 \preccurlyeq_Q q_2) \Rightarrow (f(q_1) \preccurlyeq_L f(q_2)).
\]

Let \(\Phi \colon X \to Y\) be an isotone mapping of posets \((X, {\preccurlyeq}_X)\) and \((Y, {\preccurlyeq}_Y)\). If \(\Phi\) is bijective and the inverse mapping \(\Phi^{-1} \colon Y \to X\) is also isotone, then we say that \((X, {\preccurlyeq}_X)\) and \((Y, {\preccurlyeq}_Y)\) are \emph{isomorphic} and \(\Phi\) is an (\emph{order}) \emph{isomorphism}.
\end{definition}

If \((Y, \preccurlyeq_Y)\) is a poset, and \(Y_1 \subseteq Y\), and \(\preccurlyeq_{Y_1}\) is a partial order on \(Y_1\) such that, for all \(x\), \(y \in Y_1\),
\[
(x \preccurlyeq_{Y_1} y) \Leftrightarrow (x \preccurlyeq_Y y),
\]
then we say that \((Y_1, \preccurlyeq_{Y_1})\) is a \emph{subposet} of the poset \((Y, \preccurlyeq_Y)\). 

Write \(\mathbb{Q}^{+}\) for the set of all nonnegative rational numbers, 
\[
\mathbb{Q}^{+} = \mathbb{Q} \cap [0, +\infty),
\]
and let \(\leqslant\) be the usual ordering on \(\mathbb{Q}^{+}\).

\begin{lemma}[Cantor]\label{l3.2}
Let \((X, \preccurlyeq_X)\) be a totally ordered set and let \(|X| \leqslant \aleph_{0}\) hold. Then \((X, \preccurlyeq_X)\) is isomorphic to a subposet of \((\mathbb{Q}^{+}, \leqslant)\).
\end{lemma}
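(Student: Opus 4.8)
The plan is to build an order embedding $f \colon X \to \QQ^{+}$ directly, by a one-sided back-and-forth (``forth only'') recursion, exploiting the fact that the positive rationals form a countable densely ordered set that is unbounded above. If $X$ is finite the statement is immediate (map the points to $1, 2, \dots, |X|$ in order), so I would assume $|X| = \aleph_{0}$ and fix an enumeration $X = \{x_1, x_2, \dots\}$ without repetitions. The idea is to place the points $x_1, x_2, \dots$ into $\QQ^{+}$ one at a time, at each stage inserting the new point in the position dictated by $\preccurlyeq_X$ relative to the points already placed.

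Concretely, I would define $f$ on the initial segments $X_n := \{x_1, \dots, x_n\}$ by recursion, maintaining the inductive hypothesis that $f|_{X_n}$ is an order isomorphism of $(X_n, \preccurlyeq_X)$ onto a finite set of \emph{strictly positive} rationals. For the base step put $f(x_1) := 1$. For the step from $X_n$ to $X_{n+1}$, totality of $\preccurlyeq_X$ guarantees that every already-placed point is comparable to $x_{n+1}$ and distinct from it, so exactly one of three cases occurs: $x_{n+1}$ lies $\preccurlyeq_X$-below all of $x_1, \dots, x_n$, above all of them, or strictly between them. In the first case choose $f(x_{n+1})$ to be any rational in $(0, m)$, where $m$ is the least value assigned so far; in the second, any rational larger than the current maximum; in the third, any rational strictly between $a := \max\{f(x_k) : x_k \prec_X x_{n+1}\}$ and $b := \min\{f(x_k) : x_{n+1} \prec_X x_k\}$. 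In each case such a rational exists because $\QQ^{+}$ is unbounded above and dense in $(0,\infty)$, and the chosen value is again strictly positive, so the inductive hypothesis survives.

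Setting $f := \bigcup_{n} f|_{X_n}$ yields a map on all of $X$. It is injective and strictly order-preserving by construction, and because $\preccurlyeq_X$ is total it also reflects the order: if $f(x) < f(y)$, then $y \preccurlyeq_X x$ would force $f(y) \leqslant f(x)$, so $x \prec_X y$. Hence $f$ is an isomorphism of $(X, \preccurlyeq_X)$ onto the subposet $(f(X), \leqslant)$ of $(\QQ^{+}, \leqslant)$, which is exactly the assertion.

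The one genuinely delicate point, and the step I would watch most carefully, is the presence of the least element $0$ in $\QQ^{+}$: since $\QQ^{+}$ is bounded below, the ``new minimum'' case is problematic unless the current minimum image is kept strictly positive, for otherwise the interval $(0,m)$ could be empty. I sidestep this by never using the value $0$ — starting from $f(x_1) = 1$ and only ever choosing values in open intervals with positive left endpoint or above the current maximum — which keeps $(0,m)$ nonempty at every stage. Everything else is the standard density-and-unboundedness insertion argument, and notably no hypothesis about $X$ possessing or lacking extremal elements is required.
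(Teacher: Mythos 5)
Your proof is correct: the one-point-at-a-time ``forth'' insertion of an enumerated countable total order into \(\QQ^{+}\setminus\{0\}\), using density and unboundedness of the rationals and taking care never to assign the bottom element \(0\), is precisely the classical Cantor argument. The paper gives no proof of this lemma at all --- it only cites Rosenstein's book, whose cited theorems are proved by exactly this method --- so your argument is essentially the intended one, with the only extra (and correctly handled) wrinkle being that \((\QQ^{+}, \leqslant)\) has a least element.
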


The proof can be obtained directly from the classical Cantor's results (see, for example, \cite{Ros1982}, Chapter~2, Theorem~2.6 and Theorem~2.8).

We will also use the following Szpilrajn Theorem.

\begin{lemma}[Szpilrajn]\label{l3.3}
Let \((X, \preccurlyeq_X)\) be a poset. Then there is a linear order \(\preccurlyeq\) on \(X\) such that
\[
{\preccurlyeq_X} \subseteq {\preccurlyeq}.
\]
\end{lemma}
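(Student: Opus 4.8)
The plan is to invoke Zorn's Lemma on the family of all partial orders on \(X\) that contain \(\preccurlyeq_X\), and then to show that a maximal such order is automatically linear. Let \(\mathcal{P}\) denote the set of all binary relations \(\tau \subseteq X^{2}\) that are partial orders on \(X\) and satisfy \({\preccurlyeq_X} \subseteq \tau\). Since \({\preccurlyeq_X} \in \mathcal{P}\), the family \(\mathcal{P}\) is nonempty, and I would partially order it by inclusion. The objective is to extract a maximal element of \(\mathcal{P}\) and argue that maximality forces totality.

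First I would verify the hypothesis of Zorn's Lemma. Given a chain \(\mathcal{C} \subseteq \mathcal{P}\) (totally ordered by inclusion), set \(\tau_0 = \bigcup_{\tau \in \mathcal{C}} \tau\). Reflexivity and the inclusion \({\preccurlyeq_X} \subseteq \tau_0\) are inherited immediately from any member of \(\mathcal{C}\). For antisymmetry and transitivity, the finitely many pairs witnessing an instance of either property all lie in a single member of the chain, where the property already holds; hence \(\tau_0 \in \mathcal{P}\) is an upper bound for \(\mathcal{C}\). Zorn's Lemma then supplies a maximal element \({\preccurlyeq^{*}} \in \mathcal{P}\).

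It remains to prove that \(\preccurlyeq^{*}\) is a linear order, and this is the heart of the argument. Suppose not; then there exist \emph{incomparable} \(a\), \(b \in X\), meaning that neither \(a \preccurlyeq^{*} b\) nor \(b \preccurlyeq^{*} a\) holds. I would then form the relation
\[
\tau := {\preccurlyeq^{*}} \cup \bigl\{\<x, y> \colon x \preccurlyeq^{*} a \text{ and } b \preccurlyeq^{*} y\bigr\}.
\]
Because \(a \preccurlyeq^{*} a\) and \(b \preccurlyeq^{*} b\), we have \(\<a, b> \in \tau\) yet \(\<a, b> \notin {\preccurlyeq^{*}}\), so \({\preccurlyeq^{*}} \subsetneq \tau\); thus once \(\tau\) is shown to be a partial order containing \(\preccurlyeq_X\), its existence contradicts the maximality of \(\preccurlyeq^{*}\) and finishes the proof. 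The main obstacle is precisely the verification that \(\tau\) is antisymmetric and transitive, and here incomparability is exactly the lever one needs. Indeed, whenever the newly adjoined clause is composed with itself or chained through \(\preccurlyeq^{*}\), a short case analysis produces a string \(b \preccurlyeq^{*} \cdots \preccurlyeq^{*} a\), which would yield \(b \preccurlyeq^{*} a\) and contradict incomparability: if \(\<x, y>\) and \(\<y, x>\) both lie in \(\tau\) with at least one arising from the new clause, one extracts such a string (so antisymmetry reduces to the antisymmetry of \(\preccurlyeq^{*}\) for the remaining pairs), and if \(\<x, y>\) and \(\<y, z>\) both come from the new clause, the same contradiction shows this configuration is impossible (so transitivity reduces to the cases already covered by transitivity of \(\preccurlyeq^{*}\)). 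Since no incomparable pair can exist, \(\preccurlyeq^{*}\) is the desired linear extension of \(\preccurlyeq_X\).
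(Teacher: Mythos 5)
The paper itself gives no proof of this lemma: it is stated as a known result with a citation to Szpilrajn's 1930 paper, so your argument is to be judged against the standard literature proof rather than against anything in the text. Your overall strategy is exactly that standard proof: order the family \(\mathcal{P}\) of extending partial orders by inclusion, bound each chain by its union, extract a maximal element \({\preccurlyeq^{*}}\) via Zorn's Lemma, and show that maximality forces totality by adjoining, for an incomparable pair \(a\), \(b\), all pairs \(\langle x, y\rangle\) with \(x \preccurlyeq^{*} a\) and \(b \preccurlyeq^{*} y\). The chain argument is fine, and your antisymmetry analysis is correct: any violation involving the new clause produces a chain from \(b\) to \(a\) in \({\preccurlyeq^{*}}\), contradicting incomparability.

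There is, however, a concrete error in your transitivity verification. After ruling out the case where \(\langle x, y\rangle\) and \(\langle y, z\rangle\) both come from the new clause, you assert that ``transitivity reduces to the cases already covered by transitivity of \(\preccurlyeq^{*}\).'' That is false: the two mixed cases remain, and they are not instances of transitivity of \({\preccurlyeq^{*}}\). If \(\langle x, y\rangle \in {\preccurlyeq^{*}}\) while \(\langle y, z\rangle\) comes from the new clause (so \(y \preccurlyeq^{*} a\) and \(b \preccurlyeq^{*} z\)), then no contradiction arises and \(\langle x, z\rangle\) need not lie in \({\preccurlyeq^{*}}\) at all; what one must observe is that \(x \preccurlyeq^{*} y \preccurlyeq^{*} a\) yields \(x \preccurlyeq^{*} a\), so that \(\langle x, z\rangle\) again satisfies the defining condition of the adjoined clause and hence lies in \(\tau\). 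The symmetric case, \(\langle x, y\rangle\) from the new clause and \(\langle y, z\rangle \in {\preccurlyeq^{*}}\), is handled the same way. These mixed cases are precisely the reason \(\tau\) must be defined using the whole product of the down-set of \(a\) with the up-set of \(b\), rather than simply as \({\preccurlyeq^{*}} \cup \{\langle a, b\rangle\}\); your definition of \(\tau\) is the right one, but your case analysis skips exactly the cases that justify it. With that one-line repair the proof is complete and correct.
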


Informally speaking it means that each partial order on a set can be extended to a linear order on the same set.

\begin{remark}\label{r3.9}
This result was obtained by Edward Szpilrajn in~\cite{Szp1930FM}. Interesting reviews of Szpilrajn-type theorems can be found in~\cite{And2009} and \cite{BP1982}.
\end{remark}

Let \(X\) be a nonempty set and let \(\Phi\) be a symmetric mapping with \(\dom\Phi = X^{2}\) and let \(Y := \Phi(X^{2})\). Let us define a binary relation \(u_{\Phi}\) by the rule: \(\<y_1, y_2> \in u_{\Phi}\) if and only if \(\<y_1, y_2> \in Y^{2}\) and there are \(x_1\), \(x_2\), \(x_3 \in X\) such that
\begin{equation}\label{e2.16}
y_1 = \Phi(x_1, x_3) \text{ and } y_2 = \Phi(x_1, x_2) = \Phi(x_2, x_3).
\end{equation}

\begin{example}\label{ex3.20}
Let \((X, d)\) be a nonempty ultrametric space. Recall that a subset \(B\) of \(X\) is a (closed) ball if there are \(x^{*} \in X\) and \(r^{*} \in \RR^{+}\) such that
\[
B = \{x \in X \colon d(x, x^{*}) \leqslant r^{*}\}.
\]
The diameter of \(B\), we denote it by \(\diam(B)\), is defined as 
\[
\diam(B) := \sup\{d(x,y) \colon x, y \in B\}.
\]
The following statements are equivalent for every \(\<r_1, r_2> \in \RR^{+} \times \RR^{+}\).
\begin{itemize}
\item \(\<r_1, r_2> \in u_d\).
\item There are some balls \(B_1\) and \(B_2\) in \((X, d)\) such that \(B_1 \subseteq B_2\), and \(r_1 = \diam(B_1)\), and \(r_2 = \diam(B_2)\).
\item There are some balls \(B_1\) and \(B_2\) in \((X, d)\) such that \(B_1 \cap B_2 \neq \varnothing\), and \(r_1 = \diam(B_1)\), \(r_2 = \diam(B_2)\), and \(r_1 \leqslant r_2\).
\end{itemize}
The interchangeability of these conditions is easy to justify using the known properties of balls in ultrametric spaces (see, for example, Proposition~1.2 and Proposition~1.6 in~\cite{Dov2019PNUAA}).
\end{example}

\begin{theorem}\label{t3.7}
Let \(X\) be a nonempty set and let \(\Phi\) be a mapping with \(\dom \Phi = X^{2}\) and \(|\Phi(X^{2})| \leqslant \aleph_{0}\). Then the following conditions are equivalent.
\begin{enumerate}
\item[\((i)\)] \(\Phi\) is combinatorially similar to a pseudoultrametric \(d \colon X^{2} \to \RR^{+}\) with \(d(X^{2}) \subseteq \QQ^{+}\).
\item[\((ii)\)] \(\Phi\) is combinatorially similar to a pseudoultrametric.
\item[\((iii)\)] The mapping \(\Phi\) is symmetric, and the transitive closure \(u_{\Phi}^{t}\) of the binary relation \(u_{\Phi}\) is antisymmetric, and \(\Phi\) is \(a_0\)-coherent for a point \(a_0 \in \Phi(X^{2})\), and, for every triple \(\<x_1, x_2, x_3>\) of points of \(X\), there is a permutation
\[
\begin{pmatrix}
x_1 & x_2 & x_3\\
x_{i_1} & x_{i_2} & x_{i_3}
\end{pmatrix}
\]
such that \(\Phi(x_{i_1}, x_{i_2}) = \Phi(x_{i_2}, x_{i_3})\).
\end{enumerate}
\end{theorem}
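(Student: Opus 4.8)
The plan is to establish the cycle $(i) \Rightarrow (ii) \Rightarrow (iii) \Rightarrow (i)$. The implication $(i) \Rightarrow (ii)$ is immediate, since a pseudoultrametric whose range lies in $\QQ^{+}$ is in particular a pseudoultrametric.

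For $(ii) \Rightarrow (iii)$ I would verify the four requirements one at a time. Suppose $\Phi$ is combinatorially similar to a pseudoultrametric $d \colon W^{2} \to \RR^{+}$, with witnessing bijections $f \colon \Phi(X^{2}) \to d(W^{2})$ and $g \colon W \to X$. Symmetry of $\Phi$ follows from the symmetry of $d$ together with the injectivity of $f$ and the surjectivity of $g$. Since $d$ is $0$-coherent by Proposition~\ref{c2.2} and combinatorial similarity is a symmetric relation (invert $f$ and $g$), Proposition~\ref{p2.4} applied to the inverse similarity shows that $\Phi$ is $a_0$-coherent with $a_0 = f^{-1}(0)$. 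The permutation (isosceles) condition is exactly Lemma~\ref{l3.1}. The remaining and most delicate point is the antisymmetry of $u_{\Phi}^{t}$. Here the key observation is that $f$ carries $u_{\Phi}$ onto $u_{d}$: unwinding~\eqref{e2.16} and the similarity relation one checks that $\<y_1, y_2> \in u_{\Phi}$ holds if and only if $\<f(y_1), f(y_2)> \in u_{d}$, so $f$ is an isomorphism of the relational structures $(\Phi(X^{2}), u_{\Phi})$ and $(d(W^{2}), u_{d})$ and hence also carries $u_{\Phi}^{t}$ onto $u_{d}^{t}$. By the strong triangle inequality, $\<r_1, r_2> \in u_{d}$ forces $r_1 \leqslant r_2$ (cf.\ Example~\ref{ex3.20}), so $u_{d} \subseteq {\leqslant}$ and therefore $u_{d}^{t} \subseteq {\leqslant}$, which is antisymmetric; transporting this back through $f$ yields the antisymmetry of $u_{\Phi}^{t}$.

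The substantive direction is $(iii) \Rightarrow (i)$, where I would build the desired pseudoultrametric explicitly, with $g$ the identity. Put $Y = \Phi(X^{2})$. Since $u_{\Phi}^{t}$ is transitive and antisymmetric, its reflexive closure $\preccurlyeq \, := u_{\Phi}^{t} \cup \Delta_{Y}$ is a partial order on $Y$. I would first show that $a_0$ is the least element of $(Y, \preccurlyeq)$: for any $y \in Y$, writing $y = \Phi(p, q)$ and taking the triple $\<p, q, p>$ in~\eqref{e2.16} — so that $y_1 = \Phi(p, p) = a_0$ (here $\Phi(p,p) = a_0$ because $\Phi^{-1}(a_0)$ is reflexive by Remark~\ref{r2.3}) and $y_2 = \Phi(p, q) = \Phi(q, p) = y$ — gives $\<a_0, y> \in u_{\Phi} \subseteq {\preccurlyeq}$. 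Next, by Szpilrajn's Lemma~\ref{l3.3} I would extend $\preccurlyeq$ to a linear order $\preccurlyeq^{*}$ on the countable set $Y$, and by Cantor's Lemma~\ref{l3.2} fix a strictly order-preserving embedding $\phi \colon (Y, \preccurlyeq^{*}) \to (\QQ^{+}, \leqslant)$. Because $a_0$ is $\preccurlyeq$-least it is $\preccurlyeq^{*}$-least, so $\phi(a_0) = \min \phi(Y)$, and the translated map $f(y) := \phi(y) - \phi(a_0)$ is an injection of $Y$ into $\QQ^{+}$ with $f(a_0) = 0$ that is strictly increasing for $\preccurlyeq^{*}$, hence isotone for $u_{\Phi}$. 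Setting $d := f \circ \Phi$ then yields a pseudoultrametric in the sense of Definition~\ref{ch2:d2}: symmetry is inherited from $\Phi$; $d(x,x) = f(a_0) = 0$; and for any triple the permutation condition produces $y_2 = \Phi(x_{i_1}, x_{i_2}) = \Phi(x_{i_2}, x_{i_3})$ and $y_1 = \Phi(x_{i_1}, x_{i_3})$ with $\<y_1, y_2> \in u_{\Phi}$, whence $f(y_1) \leqslant f(y_2)$, i.e.\ among the three values $d(x_{i_1}, x_{i_2})$, $d(x_{i_2}, x_{i_3})$, $d(x_{i_1}, x_{i_3})$ the maximum is attained at least twice — which is precisely the strong triangle inequality. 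Since $f$ is a bijection onto $d(X^{2}) \subseteq \QQ^{+}$ and $d(x,y) = f(\Phi(x,y))$, the mapping $\Phi$ is combinatorially similar to $d$, giving $(i)$.

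The main obstacle is the $(iii) \Rightarrow (i)$ construction, and specifically the need to produce a single $f$ that is simultaneously injective and compatible with all the (possibly intertwined) ultrametric constraints recorded in $u_{\Phi}$. The antisymmetry of $u_{\Phi}^{t}$ is exactly the hypothesis preventing these constraints from forcing a cycle $y_1 \prec y_2 \prec \dots \prec y_1$ that no injective isotone $f$ could realize; it is what makes $\preccurlyeq$ a genuine partial order and thereby opens the door to the Szpilrajn--Cantor argument. A secondary point demanding care is the verification that isotonicity with respect to $u_{\Phi}$, combined with the permutation condition, really delivers the strong triangle inequality in all three cyclic orientations of each triple, not merely in the one singled out by the permutation.
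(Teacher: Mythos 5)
Your proof is correct and follows essentially the same route as the paper's: the cycle \((i)\Rightarrow(ii)\Rightarrow(iii)\Rightarrow(i)\), with Lemma~\ref{l3.1} supplying the isosceles condition, the strong triangle inequality ruling out cycles in \(u_{\Phi}^{t}\), and Szpilrajn's and Cantor's lemmas producing the rational-valued pseudoultrametric via \(d = f\circ\Phi\) with the identity as combinatorial similarity. Your two local variations --- transporting \(u_{\Phi}\) isomorphically onto \(u_{d}\subseteq{\leqslant}\) rather than chasing inequalities along a chain in \(u_{\Phi}^{t}\), and extending the order on all of \(Y\) and then translating by \(-\phi(a_0)\) rather than first deleting \(a_0\) --- are only reorganizations of the same argument.
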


\begin{proof}
\((i) \Rightarrow (ii)\). This is trivially valid.

\((ii) \Rightarrow (iii)\). Suppose \(\Phi\) is combinatorially similar to a pseudoultrametric. Then \(\Phi\) also is combinatorially similar to a pseudometric. Consequently, by Theorem~\ref{ch2:p7}, \(\Phi\) is symmetric and there is \(a_0 \in \Phi(X^{2})\) such that \(\Phi\) is \(a_0\)-coherent. If \(\<x_1, x_2, x_3>\) is an arbitrary triple of points of \(X\), then, by Lemma~\ref{l3.1}, there is a permutation 
\[
\begin{pmatrix}
x_1 & x_2 & x_3\\
x_{i_1} & x_{i_2} & x_{i_3}
\end{pmatrix}
\]
such that \(\Phi(x_{i_1}, x_{i_2}) = \Phi(x_{i_2}, x_{i_3})\). To complete the proof of validity of \((ii) \Rightarrow (iii)\) it suffices to show that the transitive closure \(u_{\Phi}^{t}\) of the binary relation 
\[
u_{\Phi} \subseteq Y^{2}, \quad Y = \Phi(X^{2}),
\]
is antisymmetric. Suppose contrary that there are distinct \(y_1\), \(y_2 \in Y\) such that \(\<y_1, y_2> \in u_{\Phi}^{t}\) and \(\<y_2, y_1> \in u_{\Phi}^{t}\). The definition of the transitive closure (see \eqref{e3.2}) and the definition of the composition of binary relations imply that there are a positive integer \(n_1\) and some points
\[
y_{1}^{*},\ y_{2}^{*},\ \ldots,\ y_{n_1+1}^{*} \in Y 
\]
with
\begin{equation}\label{t3.7:e2}
y_{1}^{*} = y_1 \quad \text{and} \quad y_{n_1+1}^{*} = y_2 \quad \text{and} \quad \<y_{i}^{*}, y_{i+1}^{*}> \in u_{\Phi}
\end{equation}
for \(i = 1\), \(\ldots\), \(n_1\). Since \(\Phi\) is combinatorially similar to a pseudoultrametric \(d \colon Z^2 \to \RR^{+}\), there are bijections
\[
g \colon Z \to Y \text{ and } f \colon \Phi(X^{2}) \to d(Z^{2})
\]
satisfying
\[
d(z_1, z_2) = f(\Phi(g(z_1), g(z_2)))
\]
for all \(z_1\), \(z_2 \in Z\). Consequently,
\[
d(g^{-1}(x_1), g^{-1}(x_2)) = f(\Phi(x_1, x_2))
\]
holds for all \(x_1\), \(x_2 \in X\). As in Example~\ref{ex3.4}, the last equality, \eqref{e2.16}, \eqref{t3.7:e2}, and the strong triangle inequality imply
\[
f(y_1) = f(y_{1}^{*}) \geqslant f(y_{2}^{*}) \geqslant \ldots \geqslant f(y_{n_1+1}^{*}) = f(y_{2}).
\]
Thus, the inequality \(f(y_1) \geqslant f(y_{2})\) holds. Similarly, we can obtain the inequality \(f(y_2) \geqslant f(y_{1})\). 

Consequently, the equality \(f(y_1) = f(y_{2})\) holds, that contradicts the bijectivity of \(f\).

\((iii) \Rightarrow (i)\). Suppose \(\Phi\) satisfies condition \((iii)\). Let us define a binary relation \({\preccurlyeq}\) on \(Y = \Phi(X^{2})\) as
\begin{equation}\label{t3.7:e3}
{\preccurlyeq} := u_{\Phi}^{t} \cup \Delta_{Y},
\end{equation}
where \(\Delta_{Y} = \{\<y, y> \colon y \in Y\}\). We claim that \({\preccurlyeq}\) is a partial order on \(Y\). Indeed, \eqref{t3.7:e3} implies that \({\preccurlyeq}\) is reflexive. By condition~\((iii)\), the transitive closure \(u_{\Phi}^{t}\) is antisymmetric. From this and \eqref{t3.7:e3} it follows that \({\preccurlyeq}\) is also antisymmetric. Moreover, using the transitivity of \(u_{\Phi}^{t}\) we obtain
\begin{align*}
(u_{\Phi}^{t} \cup \Delta_{Y})^2 &= (u_{\Phi}^{t} \circ u_{\Phi}^{t}) \cup (u_{\Phi}^{t} \circ \Delta_{Y}) \cup (\Delta_{Y} \circ u_{\Phi}^{t}) \cup (\Delta_{Y} \circ \Delta_{Y}) \\
& \subseteq u_{\Phi}^{t} \cup \Delta_{Y}.
\end{align*}
Consequently, \({\preccurlyeq}\) is transitive. Thus, \({\preccurlyeq}\) is a partial order as required. 

By condition~\((iii)\), \(\Phi\) is \(a_0\)-coherent. We will show that \(a_0\) is the smallest element of the poset \((Y, {\preccurlyeq})\). 

Let \(y_1\) be an arbitrary point of \(Y\). Then there are \(x_1\), \(x_2 \in X\) such that \(y_1 = \Phi(x_1, x_2)\). The mapping \(\Phi\) is symmetric. Thus, 
\begin{equation}\label{t3.7:e4}
\Phi(x_1, x_2) = \Phi(x_2, x_1)
\end{equation}
holds. Since \(\Phi\) is \(a_0\)-coherent, we have 
\begin{equation}\label{t3.7:e5}
\Phi(x_1, x_1) = a_0.
\end{equation}
Using~\eqref{t3.7:e4}, \eqref{t3.7:e5} and the definition of \(u_{\Phi}\) we obtain \(\<a_0, y_1> \in u_{\Phi}\) for every \(y_1 \in Y\), as required. 

Write \(\preccurlyeq_{0}\) for the intersection \({\preccurlyeq}\) with the set \(Y_0^{2}\), where 
\[
Y_0 = \{y \in Y \colon y \neq a_0\}.
\]
Then \(\preccurlyeq_{0}\) is a partial order on the set \(Y_0\). By Lemma~\ref{l3.3}, there is a linear order \(\preccurlyeq^{*}\) on \(Y_0\) such that 
\[
{\preccurlyeq_{0}} \subseteq {\preccurlyeq^{*}}.
\]
The inequality \(|Y| \leqslant \aleph_{0}\) implies \(|Y_0| \leqslant \aleph_{0}\). Using Lemma~\ref{l3.2} we can find an injective mapping \(f^{*} \colon Y \to \mathbb{Q}^{+}\) such that \(f^{*}(a_0) = 0\) and
\[
(y_1 \preccurlyeq^{*} y_2) \Leftrightarrow (f^{*}(y_1) \leqslant f^{*}(y_2))
\]
for all \(y_1\), \(y_2 \in Y\). Then the function \(d \colon X^{2} \to \RR^{+}\),
\[
d(x_1, x_2) = f^{*}(\Phi(x_1, x_2)), \quad x_1, x_2 \in X,
\]
is a pseudoultrametric on \(X\) and \(d(X^{2}) \subseteq \QQ^{+}\) holds. Since the function \(f^{*}\) is injective, the identical mapping \(X \xrightarrow{\operatorname{id}} X\) is a combinatorial similarity.
\end{proof}

Using Theorem~\ref{t3.7} and Corollary~\ref{c3.2} we also obtain.

\begin{corollary}\label{c3.8}
Let \(X\) be a nonempty set. The following conditions are equivalent for every mapping \(\Phi\) with \(\dom \Phi = X^{2}\) and \(|\Phi(X^{2})| \leqslant \aleph_{0}\).
\begin{enumerate}
\item[\((i)\)] \(\Phi\) is combinatorially similar to an ultrametric \(d \colon X^{2} \to \RR^{+}\) satisfying the inclusion \(d(X^{2}) \subseteq \QQ^{+}\).
\item[\((ii)\)] \(\Phi\) is combinatorially similar to an ultrametric.
\item[\((iii)\)] \(\Phi\) is symmetric, and the transitive closure \(u_{\Phi}^{t}\) of the binary relation \(u_{\Phi}\) is antisymmetric, and the equality
\[
\Phi^{-1}(a_0) = \Delta_{X}
\]
holds for some \(a_0 \in \Phi(X^{2})\), and, for every triple \(\<x_1, x_2, x_3>\) of points of \(X\), there is a permutation
\[
\begin{pmatrix}
x_1 & x_2 & x_3\\
x_{i_1} & x_{i_2} & x_{i_3}
\end{pmatrix}
\]
such that \(\Phi(x_{i_1}, x_{i_2}) = \Phi(x_{i_2}, x_{i_3})\).
\end{enumerate}
\end{corollary}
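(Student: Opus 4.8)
The plan is to obtain the three equivalences by layering the pseudoultrametric characterization of Theorem~\ref{t3.7} onto the metric characterization of Corollary~\ref{c3.2}. The guiding principle is that the single axiom separating an ultrametric from a pseudoultrametric, namely positivity ($d(x,y)=0 \Leftrightarrow x=y$), corresponds on the combinatorial side exactly to strengthening ``$\Phi$ is $a_0$-coherent'' into ``$\Phi^{-1}(a_0)=\Delta_{X}$''. The implication $(i)\Rightarrow(ii)$ is immediate, since an ultrametric with range in $\QQ^{+}$ is in particular an ultrametric.

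For $(ii)\Rightarrow(iii)$ I would use that an ultrametric is simultaneously a pseudoultrametric and a metric. Being combinatorially similar to a pseudoultrametric, $\Phi$ inherits from implication $(ii)\Rightarrow(iii)$ of Theorem~\ref{t3.7} three of the four clauses of $(iii)$: symmetry, antisymmetry of $u_{\Phi}^{t}$, and the isosceles permutation for every triple. Being also combinatorially similar to a metric, $\Phi$ satisfies, by Corollary~\ref{c3.2}, the equality $\Phi^{-1}(a_0)=\Delta_{X}$ for some $a_0\in\Phi(X^{2})$, which is the last clause of $(iii)$. (If $d(x,y)=f(\Phi(g(x),g(y)))$ for the ultrametric $d$, one takes $a_0=f^{-1}(0)$: since $g$ is a bijection and $d$ is positive, $\Phi(u,v)=a_0$ forces $u=v$.)

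For $(iii)\Rightarrow(i)$ I would first remark that $\Phi^{-1}(a_0)=\Delta_{X}$ already makes $\Phi$ $a_0$-coherent, because strong consistency with the diagonal is vacuous: $\langle x_1,x_2\rangle,\langle x_3,x_4\rangle\in\Delta_{X}$ force $x_1=x_2$ and $x_3=x_4$, so $\Phi(x_1,x_3)=\Phi(x_2,x_4)$ trivially. Hence $\Phi$ meets condition $(iii)$ of Theorem~\ref{t3.7}, and I would run that theorem's construction verbatim: it yields an injective $f^{*}\colon\Phi(X^{2})\to\QQ^{+}$ with $f^{*}(a_0)=0$ for which $d(x,y):=f^{*}(\Phi(x,y))$ is a pseudoultrametric with range in $\QQ^{+}$ and the identity is a combinatorial similarity. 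It then remains only to promote $d$ to an ultrametric by checking positivity: injectivity of $f^{*}$ and $f^{*}(a_0)=0$ give $d(x,y)=0 \Leftrightarrow \Phi(x,y)=a_0 \Leftrightarrow \langle x,y\rangle\in\Phi^{-1}(a_0)=\Delta_{X} \Leftrightarrow x=y$.

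The argument is essentially bookkeeping and I expect no genuinely hard step. The only places demanding care---and where I would concentrate attention---are matching the distinguished value $a_0$ across the two cited results in $(ii)\Rightarrow(iii)$, and observing in $(iii)\Rightarrow(i)$ that the pseudoultrametric already built in Theorem~\ref{t3.7} is automatically an ultrametric under the stronger hypothesis, so that no new construction is needed.
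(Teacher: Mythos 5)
Your proof is correct and takes essentially the same approach as the paper, which gives no detailed argument and simply remarks that Corollary~\ref{c3.8} is obtained by combining Theorem~\ref{t3.7} and Corollary~\ref{c3.2}. Your elaboration---identifying the distinguished value as \(a_0 = f^{-1}(0)\) in \((ii)\Rightarrow(iii)\), observing that \(\Phi^{-1}(a_0)=\Delta_{X}\) makes \(a_0\)-coherence vacuous, and extracting positivity of \(d\) from the injectivity of \(f^{*}\) together with \(f^{*}(a_0)=0\)---correctly supplies exactly the bookkeeping the paper leaves to the reader.
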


\begin{example}\label{ex3.9}
A four-point metric space \((X, d)\) is called a \emph{pseudolinear quadruple} (see~\cite{Blu1953CP} for instance) if, for a suitable enumeration of points of \(X\), we have
\begin{multline}\label{ex3.9:e1}
d(x_1, x_2) = d(x_3, x_4) = s, \quad d(x_2, x_3) = d(x_4, x_1) = t, \\
d(x_2, x_4) = d(x_3, x_1) = s + t,
\end{multline}
with some positive reals \(s\) and \(t\). For a pseudolinear quadruple \((X, d)\), Corollary~\ref{c3.8} implies that the metric \(d \colon X^{2} \to \RR^{+}\) is combinatorially similar to an ultrametric if and only if \((X, d)\) is ``equilateral'', i.e., \eqref{ex3.9:e1} holds with \(s= t\) (see Figure~\ref{fig2}). 
\begin{figure}[htb]
\begin{tikzpicture}[scale=1]
\draw (0,0) circle (1cm);
\draw [fill=black] (-1,0) circle (2pt) node [label=left:\(x_1\)] {};
\draw [fill=black] (0,-1) circle (2pt) node [label=below:\(x_2\)] {};
\draw [fill=black] (1,0) circle (2pt) node [label=right:\(x_3\)] {};
\draw [fill=black] (0,1) circle (2pt) node [label=above:\(x_4\)] {};
\end{tikzpicture}
\caption{Each equilateral, pseudolinear quadruple is (up to similarity) a subspace \(\{x_1, x_2, x_3, x_4\}\) of the unit circle endowed with the shortest path metric.}
\label{fig2}
\end{figure}
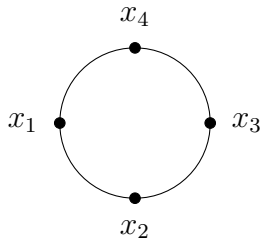
\end{example}

\begin{remark}\label{r3.10}
The pseudolinear quadruples appeared for the first time in the paper of Menger~\cite{Men1928MA}. According to Menger, the pseudolinear quadruples are characterized as the metric spaces which are not isometric to any subset of \(\RR\), but such that every triple of whose points embeds isometrically into \(\RR\). There is also an elementary proof of this fact \cite{DD2009UMZ}. It is interesting to note that the equilateral, pseudolinear quadruples are the ``most non-Ptolemaic'' metric spaces~\cite{DP2011SMJ}.
\end{remark}

For what follows we need a specification of the concept of ultrametric distances introduced above in Definition~\ref{d1.3}.

\begin{definition}\label{d3.11}
Let \((Q, \preccurlyeq_Q)\) be a poset with a smallest element \(q_0\) and let \(X\) be a nonempty set. A mapping \(d \colon X^2 \to Q\) is a \(\preccurlyeq_Q\)-\emph{pseudo\-ultra\-metric} if \(d\) is symmetric and \(d(x, x) = q_0\) holds for every \(x \in X\) and, in addition, for every triple \(\<x_1, x_2, x_3>\) of points of \(X\), there is a permutation
\[
\begin{pmatrix}
x_1 & x_2 & x_3\\
x_{i_1} & x_{i_2} & x_{i_3}
\end{pmatrix}
\]
such that
\begin{equation}\label{d3.11:e1}
d(x_{i_1}, x_{i_3}) \preccurlyeq_Q d(x_{i_1}, x_{i_2}) \quad \text{and} \quad  d(x_{i_1}, x_{i_2}) = d(x_{i_2}, x_{i_3}).
\end{equation}
For \(\preccurlyeq_{Q}\)-pseudoultrametric \(d\), satisfying \(d(x, y) = q_0\) if and only if \(x = y\), we say that \(d\) is a \(\preccurlyeq_{Q}\)-\emph{ultrametric}.
\end{definition}

If there is no ambiguity in the choice of the order \(\preccurlyeq_{Q}\) we write ``\(d\) is a \(Q\)-pseudoultrametric'' instead of ``\(d\) is a \(\preccurlyeq_Q\)-pseudoultrametric''.

\begin{remark}\label{r3.13}
It is easy to prove that every ultrametric is a \(\leqslant\)-ultra\-metric for \((\RR^{+}, \leqslant)\). Moreover, every \({\preccurlyeq}_{Q}\)-ultrametric is an ultrametric distance with the same \((Q, {\preccurlyeq}_Q)\) but not conversely (see, in particular, Example~\ref{ex3.26} at the end of the present section). For all totally ordered sets \(Q\), the ultrametric distances coincide with \(Q\)-ultrametrics, and with generalized ultrametrics defined by Priess-Crampe \cite{Pri1990RiM}.
\end{remark}

The following proposition is an extension of Proposition~\ref{c2.2} for the case of arbitrary \(Q\)-pseudoultrametric.

\begin{proposition}\label{p3.12}
Let \(X\) be a nonempty set and \((Q, \preccurlyeq_Q)\) be a poset with the smallest element \(q_0\) and let \(d \colon X^2 \to Q\) be a \(Q\)-pseudo\-ultra\-metric on \(X\). Then \(d^{-1}(q_0)\) is an equivalence relation on \(X\) and the mapping \(d\) is \(q_0\)-coherent.
\end{proposition}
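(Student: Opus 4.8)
The plan is to reduce everything to a single substitution lemma: if $d(x_1, x_2) = q_0$, then $d(x_1, w) = d(x_2, w)$ for every $w \in X$. Reflexivity of $R := d^{-1}(q_0)$ is immediate from $d(x, x) = q_0$, and symmetry of $R$ follows at once from the symmetry of $d$; so the real content — both the transitivity of $R$ and the strong consistency expressing $q_0$-coherence — will follow from this lemma.

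To prove the lemma I would apply Definition~\ref{d3.11} to the triple $\langle x_1, x_2, w\rangle$. Writing $p = d(x_1, w)$ and $r = d(x_2, w)$, the definition supplies a permutation, equivalently a choice of apex among $x_1$, $x_2$, $w$, for which the two sides meeting at the apex are equal and the opposite side is $\preccurlyeq_Q$ their common value. I would check the three apex choices in turn. If the apex is $w$, the two sides meeting at $w$ are exactly $p$ and $r$, so $p = r$ directly. If the apex is $x_1$, then $q_0 = d(x_1, x_2) = d(x_1, w) = p$, while the opposite side satisfies $r \preccurlyeq_Q p = q_0$; since $q_0$ is smallest we also have $q_0 \preccurlyeq_Q r$, so antisymmetry forces $r = q_0 = p$. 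The apex $x_2$ case is symmetric. In every case $p = r$, which is the lemma.

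With the lemma in hand the rest is short. For transitivity of $R$, if $d(x, y) = q_0$ and $d(y, z) = q_0$, I would apply the lemma to the pair $x_1 = x$, $x_2 = y$ with $w = z$ to obtain $d(x, z) = d(y, z) = q_0$; this gives transitivity and completes the proof that $R$ is an equivalence relation. For $q_0$-coherence I must verify the strong consistency implication~\eqref{e2.5} with $\Phi = d$ and fiber $R$: assuming $d(x_1, x_2) = q_0$ and $d(x_3, x_4) = q_0$, I would apply the lemma once to the pair $\langle x_1, x_2\rangle$ with $w = x_3$, giving $d(x_1, x_3) = d(x_2, x_3)$, and once to the pair $\langle x_3, x_4\rangle$ with $w = x_2$, giving $d(x_3, x_2) = d(x_4, x_2)$; chaining these with the symmetry of $d$ yields $d(x_1, x_3) = d(x_2, x_4)$, as required.

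The main obstacle is the apex-$x_1$ and apex-$x_2$ branches of the substitution lemma, where the conclusion $r = q_0$ is not handed to us as an equality but only as the inequality $r \preccurlyeq_Q q_0$; the point is that this inequality has to be upgraded to an equality by invoking both that $q_0$ is the smallest element of $(Q, \preccurlyeq_Q)$ and the antisymmetry of the partial order. This is precisely the place where the poset structure, as opposed to a mere preorder, is essential.
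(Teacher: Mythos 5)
Your proof is correct, but it is organized genuinely differently from the paper's. The paper first proves transitivity of \(d^{-1}(q_0)\) by a case analysis on the isosceles condition \eqref{d3.11:e1}, and then obtains \(q_0\)-coherence by invoking Corollary~\ref{c2.5}: since \(d\) is symmetric, strong consistency reduces to the one-sided composition identity \(d^{-1}(q_1) = d^{-1}(q_1) \circ d^{-1}(q_0)\), which the paper verifies by contradiction --- if it failed, the triple \(\<x_1, x_2, x_3>\) would carry three pairwise distinct values \(q_0\), \(q_1\), \(q_2 = d(x_1, x_3)\), which is impossible for a \(Q\)-pseudoultrametric. You instead isolate a single substitution lemma (\(d(x_1, x_2) = q_0\) implies \(d(x_1, w) = d(x_2, w)\) for every \(w\)), prove it by the three-apex case analysis, and then read off both transitivity and the two-sided strong consistency implication \eqref{e2.5} directly, applying the lemma twice and chaining with the symmetry of \(d\). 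Both arguments ultimately rest on the same two ingredients --- the isosceles condition applied to a triangle having a \(q_0\)-side, and the upgrade of \(r \preccurlyeq_Q q_0\) to \(r = q_0\) via the smallest element together with antisymmetry --- but your decomposition buys self-containedness (no appeal to the relational composition machinery of Corollary~\ref{c2.5}) and a direct rather than contradiction-based verification; your lemma is also the natural poset analogue of the metric fact that points at distance zero are indistinguishable, so it is a reusable statement in its own right. The paper's route, in exchange, keeps every verification inside three-point configurations and reuses the formalism of Section~2, which is the framework the rest of the paper is built on.
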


\begin{proof}
It follows directly from Definition~\ref{d3.11} that \(d^{-1}(q_0)\) is reflexive. To prove that \(d^{-1}(q_0)\) is symmetric it suffices to note that the mapping \(d \colon X^{2} \to Q\) is symmetric, because, for each mapping \(\Phi\) with
\[
\dom \Phi = X^{2},
\]
\(\Phi\) is symmetric if and only if \(\Phi^{-1}(b)\) is a symmetric binary relation for every \(b \in \Phi(X^{2})\). Thus, \(d^{-1}(q_0)\) is an equivalence relation if and only if \(d^{-1}(q_0)\) is transitive.

Let \(\<x_1, x_2>\) and \(\<x_2, x_3>\) belong to \(X^{2}\) and let 
\begin{equation}\label{p3.12:e1}
d(x_1, x_2) = d(x_2, x_3) = q_0.
\end{equation}
We claim that \(d(x_1, x_3) = q_0\) holds. Indeed, by Definition~\ref{d3.11}, there is a permutation
\[
\begin{pmatrix}
x_1 & x_2 & x_3\\
x_{i_1} & x_{i_2} & x_{i_3}
\end{pmatrix}
\]
such that~\eqref{d3.11:e1} holds. From~\eqref{p3.12:e1} and \eqref{d3.11:e1} it follows that 
\begin{equation}\label{p3.12:e2}
d(x_{i_1}, x_{i_2}) = d(x_{i_2}, x_{i_3}) = q_0.
\end{equation}
Using \eqref{d3.11:e1} again we see that \eqref{p3.12:e2} implies
\begin{equation}\label{p3.12:e3}
d(x_{i_1}, x_{i_3}) \preccurlyeq_Q q_0.
\end{equation}
Since \(q_0\) is the smallest element of \((Q, \preccurlyeq_Q)\), inequality \eqref{p3.12:e3} implies 
\begin{equation}\label{p3.12:e4}
d(x_{i_1}, x_{i_3}) = q_0.
\end{equation}
The equality \(d(x_1, x_3) = q_0\) follows from \eqref{p3.12:e4} and \eqref{p3.12:e2}. Thus, \(d^{-1}(q_0)\) is transitive.

Now we need to prove that \(d\) is \(q_0\)-coherent. The mapping \(d\) is symmetric. Hence, by Corollary~\ref{c2.5}, it suffices to show that 
\begin{equation}\label{p3.12:e5}
d^{-1}(q) = d^{-1}(q_1) \circ d^{-1}(q_0)
\end{equation}
for every \(q_1 \in d(X^{2})\). Let \(q_1 \in d(X^{2})\). We have
\[
d^{-1}(q_1) \subseteq d^{-1}(q_1) \circ d^{-1}(q_0),
\]
because \(d^{-1}(q_0)\) is reflexive. The converse inclusion
\begin{equation}\label{p3.12:e6}
d^{-1}(q_1) \supseteq d^{-1}(q_1) \circ d^{-1}(q_0)
\end{equation}
holds if and only if, for all \(x_1\), \(x_2\), \(x_3 \in X\), we have
\begin{equation}\label{p3.12:e7}
\<x_1, x_3> \in d^{-1}(q_1)
\end{equation}
whenever \(\<x_1, x_2> \in d^{-1}(q_1)\) and \(\<x_2, x_3> \in d^{-1}(q_0)\). If \(q_1 = q_0\), then \eqref{p3.12:e6} holds, since \(d^{-1}(q_0)\) is an equivalence relation. Suppose 
\[
q_1 \neq q_0.
\]
Write \(q_2 := d(x_1, x_3)\). If \(q_2 = q_1\), then \eqref{p3.12:e7} follows from \(\<x_1, x_3> \in d^{-1}(q_2)\). Consequently, if \eqref{p3.12:e7} is false, then we have 
\begin{equation}\label{p3.12:e8}
q_2 \neq q_1 \neq q_0.
\end{equation}
The equality \(q_2 = q_0\) implies
\begin{equation}\label{p3.12:e9}
\<x_1, x_3> \in d^{-1}(q_0),
\end{equation}
because \(d^{-1}(q_0)\) is transitive. From \eqref{p3.12:e9} and \eqref{p3.12:e7} follows \(q_0 = q_1\), contrary to \eqref{p3.12:e8}. Thus, \(q_0\), \(q_1\) and \(q_2\) are pairwise distinct, that contradicts~\eqref{d3.11:e1}.
\end{proof}

\begin{corollary}\label{c3.17}
Let \(X\) be a nonempty set and \((Q, \preccurlyeq_Q)\) be a poset and let \(d \colon X^{2} \to Q\) be a \(Q\)-pseudoultrametric (\(Q\)-ultrametric) on \(X\). Then the following statements are valid.
\begin{enumerate}
\item [\((i)\)] If \(|d(X^2)| \leqslant 2^{\aleph_{0}}\) holds, then \(d\) is combinatorially similar to an usual pseudometric (metric).
\item [\((ii)\)] If \(|d(X^2)| \leqslant \aleph_{0}\) holds, then \(d\) is combinatorially similar to an usual pseudoultrametric (ultrametric).
\end{enumerate}
\end{corollary}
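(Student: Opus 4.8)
The plan is to reduce both parts to the combinatorial criteria already proved: Theorem~\ref{ch2:p7} and Corollary~\ref{c3.2} for part~(i), and Theorem~\ref{t3.7} together with Corollary~\ref{c3.8} for part~(ii). Note that the cardinality hypotheses of those results, \(|\Phi(X^2)|\leqslant 2^{\aleph_0}\) and \(|\Phi(X^2)|\leqslant\aleph_0\), are exactly the assumptions imposed in (i) and (ii) respectively. A \(Q\)-pseudoultrametric \(d\) is symmetric by Definition~\ref{d3.11}, and Proposition~\ref{p3.12} already supplies the needed \(q_0\)-coherence; hence most hypotheses of these criteria are immediate, and the only substantive point is the antisymmetry of the transitive closure \(u_d^t\) demanded in part~(ii).

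For part~(i) I would simply verify the hypotheses. In the pseudometric case \(d\) is symmetric, \(|d(X^2)|\leqslant 2^{\aleph_0}\) by assumption, and \(d\) is \(q_0\)-coherent by Proposition~\ref{p3.12}, so Theorem~\ref{ch2:p7} yields combinatorial similarity to a pseudometric. In the metric case \(d\) is a \(Q\)-ultrametric, so \(d(x,y)=q_0\) exactly when \(x=y\), that is \(d^{-1}(q_0)=\Delta_X\), and Corollary~\ref{c3.2} applies with \(a_0=q_0\).

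For part~(ii) the extra ingredient beyond the verifications of part~(i) is the antisymmetry of \(u_d^t\). The key step is the claim that \(u_d\) is contained in the order \(\preccurlyeq_Q\) restricted to \(Y=d(X^2)\): whenever \(\langle y_1,y_2\rangle\in u_d\), i.e.\ \(y_1=d(x_1,x_3)\) and \(y_2=d(x_1,x_2)=d(x_2,x_3)\) for some \(x_1,x_2,x_3\in X\) (see the definition \eqref{e2.16}), one has \(y_1\preccurlyeq_Q y_2\). To see this I would apply the permutation condition of Definition~\ref{d3.11} to the triple \(\langle x_1,x_2,x_3\rangle\): if \(y_1=y_2\) the claim is trivial, and otherwise the two equal legs of the isosceles configuration can only be the pair of sides equal to \(y_2\), with apex \(x_2\), forcing the base \(d(x_1,x_3)=y_1\preccurlyeq_Q y_2\); the degenerate triples with repeated points give \(y_1=q_0\) and are immediate. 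Once \(u_d\subseteq{\preccurlyeq_Q}|_{Y}\) is established, transitivity of the partial order \(\preccurlyeq_Q|_{Y}\) gives \(u_d^t\subseteq{\preccurlyeq_Q}|_{Y}\), and its antisymmetry passes to \(u_d^t\). With symmetry, \(q_0\)-coherence, and the permutation property in hand, condition~(iii) of Theorem~\ref{t3.7} holds and yields combinatorial similarity to a pseudoultrametric; replacing \(q_0\)-coherence by \(d^{-1}(q_0)=\Delta_X\) in the \(Q\)-ultrametric case and invoking Corollary~\ref{c3.8} gives similarity to an ultrametric.

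The main obstacle I anticipate is precisely the case analysis behind \(u_d\subseteq{\preccurlyeq_Q}|_{Y}\): one must check that among the three vertices of the triple only \(x_2\) can play the role of apex when \(y_1\neq y_2\), so that the permutation guaranteed by Definition~\ref{d3.11} is forced to compare \(y_1\) against \(y_2\) in the correct direction, and one must separately dispose of the triples in which \(x_1\), \(x_2\), \(x_3\) are not pairwise distinct.
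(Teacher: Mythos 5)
Your proposal is correct and follows essentially the same route as the paper: both parts reduce to the earlier combinatorial criteria (Theorem~\ref{ch2:p7} and Corollary~\ref{c3.2} for~\((i)\), Theorem~\ref{t3.7} and Corollary~\ref{c3.8} for~\((ii)\)), with symmetry and the permutation condition read off from Definition~\ref{d3.11} and \(q_0\)-coherence supplied by Proposition~\ref{p3.12}. The one point the paper leaves implicit --- antisymmetry of \(u_d^{t}\), dismissed with ``using Definition~\ref{d3.11} we can show that condition \((iii)\) of Theorem~\ref{t3.7} is valid'' --- is exactly the inclusion \(u_d \subseteq {\preccurlyeq}_Q\) restricted to \(d(X^2)\) that you verify via the apex case analysis, and this is the same observation the paper itself records in the proof of Lemma~\ref{l3.18}, so your filled-in detail is sound and consistent with the author's intent.
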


\begin{proof}
Suppose first that \(d\) is a \(Q\)-pseudoultrametric. 

\((i)\). If \(|d(X^2)| \leqslant 2^{\aleph_{0}}\) holds, then Definition~\ref{d3.11} and Proposition~\ref{p3.12} imply condition \((ii)\) of Theorem~\ref{ch2:p7}. Thus, \((i)\) is valid by Theorem~\ref{ch2:p7}.

\((ii)\). Analogously, using Definition~\ref{d3.11} we can show that condition \((iii)\) of Theorem~\ref{t3.7} is valid for \(\Phi = d\). Thus, \((ii)\) follows from Theorem~\ref{t3.7}.

The case when \(d\) is a \(Q\)-ultrametric can be considered similarly.
\end{proof}

The next theorem is a partial generalization of Theorem~\ref{t3.7}.

\begin{theorem}\label{t3.15}
Let \(X\) be a nonempty set and let \(\Phi\) be a mapping with \(\dom \Phi = X^{2}\). Then the following conditions are equivalent.
\begin{enumerate}
\item[\((i)\)] There is a totally ordered set \(Q\) such that \(\Phi\) is combinatorially similar to a \(Q\)-pseudoultrametric.
\item[\((ii)\)] There is a poset \(Q\) such that \(\Phi\) is combinatorially similar to a \(Q\)-pseudoultrametric.
\item[\((iii)\)] The mapping \(\Phi\) is symmetric, and the transitive closure \(u_{\Phi}^{t}\) of the binary relation \(u_{\Phi}\) is antisymmetric, and there is \(a_0 \in \Phi(X^{2})\) for which \(\Phi\) is \(a_0\)-coherent, and, for every triple \(\<x_1, x_2, x_3>\) of points of \(X\), there is a permutation
\[
\begin{pmatrix}
x_1 & x_2 & x_3\\
x_{i_1} & x_{i_2} & x_{i_3}
\end{pmatrix}
\]
such that \(\Phi(x_{i_1}, x_{i_2}) = \Phi(x_{i_2}, x_{i_3})\).
\item[\((iv)\)] There is \(b_0 \in \Phi(X^{2})\) such that \(\Phi(x,x) = b_0\) holds for every \(x \in X\), and the binary relation 
\begin{equation}\label{t3.15:e1}
{\preccurlyeq}_{\Phi} := u_{\Phi}^{t} \cup \Delta_{\Phi(X^{2})}
\end{equation}
is a partial order on \(\Phi(X^{2})\), and \(b_0\) is the smallest element of \((\Phi(X^{2}), {\preccurlyeq}_{\Phi})\), and \(\Phi\) is a \( {\preccurlyeq}_{\Phi}\)-pseudo\-ultra\-metric on \(X\).
\end{enumerate}
\end{theorem}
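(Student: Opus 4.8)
The plan is to run the cycle \((i) \Rightarrow (ii) \Rightarrow (iii) \Rightarrow (iv) \Rightarrow (i)\). The first implication \((i) \Rightarrow (ii)\) is immediate, since every totally ordered set is in particular a poset, so a \(Q\)-pseudoultrametric over a chain is a \(Q\)-pseudoultrametric over a poset.

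For \((ii) \Rightarrow (iii)\) I would start from bijections \(f \colon \Phi(X^{2}) \to d(Z^{2})\) and \(g \colon Z \to X\) realizing the combinatorial similarity of \(\Phi\) with a \(Q\)-pseudoultrametric \(d \colon Z^{2} \to Q\), so that \(d(g^{-1}(x_1), g^{-1}(x_2)) = f(\Phi(x_1, x_2))\). Since \(f\) is a bijection it preserves and reflects all equalities among \(\Phi\)-values, so symmetry of \(\Phi\) and the isosceles permutation property transfer at once from Definition~\ref{d3.11} for \(d\); the \(a_0\)-coherence of \(\Phi\) with \(a_0 = f^{-1}(q_0)\) follows from the \(q_0\)-coherence of \(d\) (Proposition~\ref{p3.12}) together with the invariance in Proposition~\ref{p2.4}, applied to the similarity in the direction from \(d\) to \(\Phi\). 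The one genuinely new point, and the step I expect to be the main obstacle, is the antisymmetry of \(u_{\Phi}^{t}\). The strategy is to establish the auxiliary inclusion \(u_{d} \subseteq {\preccurlyeq}_{Q}\) valid for every \(Q\)-pseudoultrametric: if \(\<q_1, q_2> \in u_{d}\) is witnessed by \(z_1, z_2, z_3\) with \(q_1 = d(z_1, z_3)\) and \(q_2 = d(z_1, z_2) = d(z_2, z_3)\), then when \(q_1 \neq q_2\) the only pair of equal sides of the triangle meets at \(z_2\), so the permutation supplying~\eqref{d3.11:e1} is forced to take \(z_2\) as its middle vertex and therefore yields \(q_1 = d(z_1, z_3) \preccurlyeq_Q d(z_1, z_2) = q_2\); the case \(q_1 = q_2\) is trivial by reflexivity. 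Hence \(u_{d} \subseteq {\preccurlyeq}_{Q}\), so \(u_{d}^{t} \subseteq {\preccurlyeq}_{Q}\) and \(u_{d}^{t}\) inherits antisymmetry from the partial order \({\preccurlyeq}_{Q}\). A direct unwinding of~\eqref{e2.16} shows \(\<y_1, y_2> \in u_{\Phi}\) if and only if \(\<f(y_1), f(y_2)> \in u_{d}\), so \(f\) carries \(u_{\Phi}^{t}\) isomorphically onto \(u_{d}^{t}\) and antisymmetry descends to \(u_{\Phi}^{t}\).

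For \((iii) \Rightarrow (iv)\) I would reuse the construction from the proof of Theorem~\ref{t3.7}. Setting \({\preccurlyeq}_{\Phi} := u_{\Phi}^{t} \cup \Delta_{\Phi(X^{2})}\), reflexivity comes from the diagonal, antisymmetry from that of \(u_{\Phi}^{t}\), and transitivity from \(u_{\Phi}^{t} \circ u_{\Phi}^{t} \subseteq u_{\Phi}^{t}\), exactly as computed there. Coherence forces \(\Phi^{-1}(a_0)\) to be reflexive, so \(\Phi(x,x) = a_0\) for every \(x\) and we take \(b_0 = a_0\); the membership \(\<a_0, y_1> \in u_{\Phi}\) for every \(y_1 = \Phi(x_1, x_2)\), obtained from the triple \(\<x_1, x_2, x_1>\) together with symmetry, shows that \(a_0\) is the smallest element of \((\Phi(X^{2}), {\preccurlyeq}_{\Phi})\). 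Finally, to see that \(\Phi\) is a \({\preccurlyeq}_{\Phi}\)-pseudoultrametric, observe that the permutation supplied by \((iii)\) already furnishes the equality in~\eqref{d3.11:e1}, while putting \(y_1 = \Phi(x_{i_1}, x_{i_3})\) and \(y_2 = \Phi(x_{i_1}, x_{i_2}) = \Phi(x_{i_2}, x_{i_3})\) places \(\<y_1, y_2> \in u_{\Phi} \subseteq {\preccurlyeq}_{\Phi}\), which is precisely the required inequality \(\Phi(x_{i_1}, x_{i_3}) \preccurlyeq_{\Phi} \Phi(x_{i_1}, x_{i_2})\).

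For \((iv) \Rightarrow (i)\) the idea is to linearize. By Lemma~\ref{l3.3} there is a linear order \({\preccurlyeq}^{*}\) on \(\Phi(X^{2})\) with \({\preccurlyeq}_{\Phi} \subseteq {\preccurlyeq}^{*}\); since \(b_0 \preccurlyeq_{\Phi} y\) for all \(y\), we still have \(b_0 \preccurlyeq^{*} y\), so \(b_0\) remains the smallest element. Because \({\preccurlyeq}_{\Phi} \subseteq {\preccurlyeq}^{*}\), every inequality of the form \(\Phi(x_{i_1}, x_{i_3}) \preccurlyeq_{\Phi} \Phi(x_{i_1}, x_{i_2})\) persists for \({\preccurlyeq}^{*}\), so the very same permutations witness that \(\Phi\) is a \({\preccurlyeq}^{*}\)-pseudoultrametric for the totally ordered set \(Q = (\Phi(X^{2}), {\preccurlyeq}^{*})\). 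Being itself a \(Q\)-pseudoultrametric, \(\Phi\) is combinatorially similar to itself via the identity maps, which yields \((i)\) and closes the cycle. I would note, in contrast with Theorem~\ref{t3.7}, that no countability hypothesis on \(\Phi(X^{2})\) is needed here precisely because the \(Q\)-valued formulation allows us to take \(Q = \Phi(X^{2})\) with an abstract chain order, rather than embedding into \(\QQ^{+}\).
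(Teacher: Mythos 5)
Your proposal is correct and takes essentially the same route as the paper: the same implication cycle \((i)\Rightarrow(ii)\Rightarrow(iii)\Rightarrow(iv)\Rightarrow(i)\), with symmetry, coherence (via Propositions~\ref{p3.12} and~\ref{p2.4}) and the isosceles property transferred through the similarity bijections in \((ii)\Rightarrow(iii)\), and Szpilrajn's theorem (Lemma~\ref{l3.3}) closing the cycle in \((iv)\Rightarrow(i)\). Your antisymmetry argument --- proving \(u_{d} \subseteq {\preccurlyeq}_{Q}\) by the forced-middle-vertex observation and then trapping \(u_{d}^{t}\) inside the partial order --- is only a cleaner packaging of the chain-based contradiction that the paper imports from the proof of Theorem~\ref{t3.7}, and that same inclusion \(u_{d} \subseteq {\preccurlyeq}_{Q}\) appears in the paper's proof of Lemma~\ref{l3.18}.
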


\begin{proof}
The implication \((i) \Rightarrow (ii)\) is trivially valid. The validity of \((ii) \Rightarrow (iii)\) can be verified by repetition of the first part of the proof of Theorem~\ref{t3.7} with the replacement of the word ``Theorem~\ref{ch2:p7}'' by word ``Proposition~\ref{p3.12}''. It should be noted that Lemma~\ref{l3.1} remains valid if \(\Phi\) is combinatorially similar to an arbitrary \(Q\)-pseudoultrametric.

\((iii) \Rightarrow (iv)\). Let \((iii)\) hold. Then \(u_{\Phi}^{t}\) is antisymmetric and transitive. Consequently, the relation \({\preccurlyeq}_{\Phi}\) is reflexive, antisymmetric and transitive, i.e., \({\preccurlyeq}_{\Phi}\) is a partial order on \(\Phi(X^{2})\). Since \(\Phi\) is \(a_0\)-coherent, the equality \(\Phi(x,x) = a_0\) holds for every \(x \in X\).

The point \(a_0\) is the smallest element of \((\Phi(X^{2}), {\preccurlyeq}_{\Phi})\) if and only if the inequality
\begin{equation}\label{t3.15:e2}
a_0 \preccurlyeq_{\Phi} \Phi(x, y)
\end{equation}
holds for all \(x\), \(y \in X\). To prove \eqref{t3.15:e2} we consider the triple \(\<y, x, y>\) and note that \(\Phi(x, y) = \Phi(y,x)\). Consequently, \(\<\Phi(x,x), \Phi(x,y)>\) belongs to \(u_{\Phi}\). Now \eqref{t3.15:e2} follows from \eqref{t3.15:e1}.

By condition \((iii)\), \(\Phi(x,x) = a_0\) holds for every \(x \in X\) and, for every triple \(\<x_1, x_2, x_3>\) of points of \(X\), there is a permutation
\[
\begin{pmatrix}
x_1 & x_2 & x_3\\
x_{i_1} & x_{i_2} & x_{i_3}
\end{pmatrix}
\]
such that \(\Phi(x_{i_1}, x_{i_2}) = \Phi(x_{i_2}, x_{i_3})\). The mapping \(\Phi\) is symmetric.  Hence, \(\Phi\) is a \({\preccurlyeq}_{\Phi}\)-pseudoultrametric on \(X\) as required.

\((iv) \Rightarrow (i)\). Let \((iv)\) hold. Then \(\Phi\) is a \({\preccurlyeq}_{\Phi}\)-pseudoultrametric. By Lemma~\ref{l3.3} (Szpilrajn) the partial order \({\preccurlyeq}_{\Phi}\) can be extended to a linear order \({\preccurlyeq}\) on \(\Phi(X^{2})\). It is easy to see that the smallest element \(a_0\) of \((\Phi(X^{2}), {\preccurlyeq}_{\Phi})\) is also the smallest element of \((\Phi(X^{2}), {\preccurlyeq})\). Thus, \(\Phi\) is also a \({\preccurlyeq}\)-pseudoultrametric. Condition \((i)\) follows.
\end{proof}

\begin{corollary}\label{c3.19}
Let \(X\) be a nonempty set and let \(\Phi\) be a mapping with \(\dom \Phi = X^{2}\). Then the following conditions are equivalent.
\begin{enumerate}
\item[\((i)\)] There is a totally ordered set \(Q\) such that \(\Phi\) is combinatorially similar to a \(Q\)-ultra\-metric.
\item[\((ii)\)] There is a poset \(Q\) such that \(\Phi\) is combinatorially similar to a \(Q\)-ultra\-metric.
\item[\((iii)\)] The mapping \(\Phi\) is symmetric, and the transitive closure \(u_{\Phi}^{t}\) of the binary relation \(u_{\Phi}\) is antisymmetric, and there is \(a_0 \in \Phi(X^{2})\) for which \(\Phi^{-1}(a_0) = \Delta_{X}\) holds, and, for every triple \(\<x_1, x_2, x_3>\) of points of \(X\), there is a permutation
\[
\begin{pmatrix}
x_1 & x_2 & x_3\\
x_{i_1} & x_{i_2} & x_{i_3}
\end{pmatrix}
\]
such that \(\Phi(x_{i_1}, x_{i_2}) = \Phi(x_{i_2}, x_{i_3})\).
\item[\((iv)\)] There is \(b_0 \in \Phi(X^{2})\) such that \(\Phi^{-1}(b_0) = \Delta_{X}\) holds, and the binary relation 
\[
{\preccurlyeq}_{\Phi} := u_{\Phi}^{t} \cup \Delta_{\Phi(X^{2})}
\]
is a partial order on \(\Phi(X^{2})\), and \(b_0\) is the smallest element of \((\Phi(X^{2}), {\preccurlyeq}_{\Phi})\), and \(\Phi\) is a \( {\preccurlyeq}_{\Phi}\)-ultra\-metric on \(X\).
\end{enumerate}
\end{corollary}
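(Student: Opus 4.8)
The plan is to run exactly the four-implication cycle $(i)\Rightarrow(ii)\Rightarrow(iii)\Rightarrow(iv)\Rightarrow(i)$ used in Theorem~\ref{t3.15}, replacing everywhere the hypothesis of $a_0$-coherence by the stronger demand $\Phi^{-1}(a_0)=\Delta_X$, and exploiting the single structural fact that a $Q$-ultrametric is precisely a $Q$-pseudoultrametric $d$ whose fiber $d^{-1}(q_0)$ over the smallest element equals the diagonal. I would first record a preliminary observation: the condition $\Phi^{-1}(a_0)=\Delta_X$ already forces $\Phi$ to be $a_0$-coherent, since strong consistency with $\Delta_X$ is vacuous --- the memberships $\<x_1,x_2>,\<x_3,x_4>\in\Delta_X$ force $x_1=x_2$ and $x_3=x_4$, so implication~\eqref{e2.5} collapses to a tautology. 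Hence every condition of Corollary~\ref{c3.19} entails the corresponding condition of Theorem~\ref{t3.15}, and the latter is available throughout.

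The implication $(i)\Rightarrow(ii)$ is immediate. For $(ii)\Rightarrow(iii)$ I would invoke $(ii)\Rightarrow(iii)$ of Theorem~\ref{t3.15} (a $Q$-ultrametric being in particular a $Q$-pseudoultrametric) to obtain symmetry of $\Phi$, antisymmetry of $u_{\Phi}^{t}$, $a_0$-coherence for some $a_0$, and the isosceles-permutation condition. It then remains only to upgrade $a_0$-coherence to $\Phi^{-1}(a_0)=\Delta_X$, and this is the one genuinely new step. Writing the similarity as $d(x,y)=f(\Phi(g(x),g(y)))$ with bijections $f\colon\Phi(X^{2})\to d(Z^{2})$ and $g\colon Z\to X$, and setting $a_0=f^{-1}(q_0)$ where $q_0$ is the smallest element of $Q$, I would use that $d$ is a $Q$-ultrametric, so $d^{-1}(q_0)=\Delta_Z$. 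For arbitrary $u,v\in X$ pick $x,y\in Z$ with $u=g(x)$, $v=g(y)$; then the chain $\Phi(u,v)=a_0\Leftrightarrow d(x,y)=q_0\Leftrightarrow x=y\Leftrightarrow u=v$, valid by injectivity of $f$ and $g$, yields $\Phi^{-1}(a_0)=\Delta_X$.

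For $(iii)\Rightarrow(iv)$ I would apply $(iii)\Rightarrow(iv)$ of Theorem~\ref{t3.15} to conclude that $\preccurlyeq_{\Phi}$ is a partial order on $\Phi(X^{2})$ with smallest element $a_0$ and that $\Phi$ is a $\preccurlyeq_{\Phi}$-pseudoultrametric; putting $b_0=a_0$, the extra hypothesis $\Phi^{-1}(a_0)=\Delta_X$ is exactly what promotes $\Phi$ from a $\preccurlyeq_{\Phi}$-pseudoultrametric to a $\preccurlyeq_{\Phi}$-ultrametric by Definition~\ref{d3.11}. Finally, for $(iv)\Rightarrow(i)$ I would extend $\preccurlyeq_{\Phi}$ to a linear order $\preccurlyeq$ on $\Phi(X^{2})$ by Lemma~\ref{l3.3}, note as in Theorem~\ref{t3.15} that $b_0$ remains smallest and that the inclusion $\preccurlyeq_{\Phi}\subseteq\preccurlyeq$ preserves the inequality in~\eqref{d3.11:e1}, so $\Phi$ is a $\preccurlyeq$-pseudoultrametric; since $\Phi^{-1}(b_0)=\Delta_X$ is retained, $\Phi$ is in fact a $\preccurlyeq$-ultrametric on $X$ for the totally ordered set $(\Phi(X^{2}),\preccurlyeq)$, and the identity mapping witnesses the required combinatorial similarity.

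The only step demanding real work is the $\Delta_X$-upgrade inside $(ii)\Rightarrow(iii)$; everything else is bookkeeping against Theorem~\ref{t3.15}. The main thing to be careful about is ensuring that the single distinguished value $a_0$ serving simultaneously as the coherence point, the smallest element of $\preccurlyeq_{\Phi}$, and the unique preimage of the diagonal is consistently the same element of $\Phi(X^{2})$ at each stage of the cycle.
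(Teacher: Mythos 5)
Your proof is correct and follows exactly the route the paper intends: Corollary~\ref{c3.19} is stated in the paper without proof, as a direct analogue of Theorem~\ref{t3.15}, and your argument supplies the omitted details in the natural way --- running the same four-implication cycle, using that \(\Phi^{-1}(a_0)=\Delta_{X}\) trivially implies \(a_0\)-coherence, performing the \(\Delta_{X}\)-upgrade in \((ii)\Rightarrow(iii)\) via the bijections \(f\) and \(g\), and noting that the diagonal fiber condition is what promotes a \({\preccurlyeq}_{\Phi}\)-pseudoultrametric to a \({\preccurlyeq}_{\Phi}\)-ultrametric under Definition~\ref{d3.11}. The consistency of the distinguished value \(a_0\) across the stages, which you flag as a point of care, is in fact automatic, since any coherence point and any value with diagonal fiber must equal the common value \(\Phi(x,x)\).
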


The next corollary follows from Corollary~\ref{c3.8} and Corollary~\ref{c3.19}.

\begin{corollary}\label{c3.20}
Let \((Q, {\preccurlyeq}_{Q})\) be a poset with a smallest element, let \(X\) be a nonempty set and let \(d \colon X^{2} \to Q\) be an ultrametric distance in the sense of Priess-Crampe and Ribenboim.  If the inequality \(|Q| \leqslant \aleph_{0}\) holds, then the following conditions are equivalent.
\begin{enumerate}
\item [\((i)\)] The mapping \(d\) is a \(Q\)-ultra\-metric.
\item [\((ii)\)] There is an usual ultrametric \(\rho \colon X^{2} \to \RR^{+}\) such that \(d\) and \(\rho\) are combinatorially similar.
\end{enumerate}
\end{corollary}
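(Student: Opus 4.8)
The plan is to build the whole argument on one observation: condition $(iii)$ of Corollary~\ref{c3.8} and condition $(iii)$ of Corollary~\ref{c3.19} are word-for-word the same statement about a mapping $\Phi$; I will call this common statement $(*)$. Since $d(X^2) \subseteq Q$ and $|Q| \leqslant \aleph_0$, we have $|d(X^2)| \leqslant \aleph_0$, so both corollaries are applicable to $\Phi = d$. By Corollary~\ref{c3.8}, $(*)$ is equivalent to ``$d$ is combinatorially similar to a usual ultrametric''; by Corollary~\ref{c3.19}, $(*)$ is equivalent to ``$d$ is combinatorially similar to a $Q'$-ultrametric for some poset $Q'$''. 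The corollary then becomes a matter of tying these two equivalences to the given poset $(Q, \preccurlyeq_Q)$.

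For $(i) \Rightarrow (ii)$ I would argue as follows. If $d$ is a $Q$-ultrametric, then $d$ is combinatorially similar to a $Q$-ultrametric via the identity similarity, so condition $(ii)$ of Corollary~\ref{c3.19} holds. Hence $(*)$ holds, and Corollary~\ref{c3.8} delivers a usual ultrametric $\rho \colon X^2 \to \RR^+$ combinatorially similar to $d$, which is exactly $(ii)$.

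For $(ii) \Rightarrow (i)$ I would first extract $(*)$ from $(ii)$ via Corollary~\ref{c3.8}. This gives that $d$ is symmetric, that $d^{-1}(a_0) = \Delta_X$ for some $a_0 \in d(X^2)$, and that for every triple $\langle x_1, x_2, x_3\rangle$ of points of $X$ there is a permutation with $d(x_{i_1}, x_{i_2}) = d(x_{i_2}, x_{i_3})$. Before checking the axioms of Definition~\ref{d3.11} I would note that $a_0 = q_0$: since $d$ is an ultrametric distance, Definition~\ref{d1.3}$(i)$ yields $d^{-1}(q_0) = \Delta_X$, and as $d$ is a function the fibers $d^{-1}(a_0)$ and $d^{-1}(q_0)$ can coincide only if $a_0 = q_0$. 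Symmetry, the equality $d(x,x) = q_0$, and the condition $d(x,y) = q_0 \Leftrightarrow x = y$ are then immediate from Definition~\ref{d1.3}.

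The hard part will be upgrading the ``two sides equal'' statement supplied by $(*)$ to the genuine $Q$-ultrametric inequality~\eqref{d3.11:e1}, which additionally demands that the third side be $\preccurlyeq_Q$-below the equal pair; note that for a general poset this is a strictly stronger requirement than $(*)$. This is exactly where the Priess-Crampe--Ribenboim axiom Definition~\ref{d1.3}$(iii)$ is needed. Fixing the permutation from $(*)$ and writing $\gamma := d(x_{i_1}, x_{i_2}) = d(x_{i_2}, x_{i_3})$, both $d(x_{i_1}, x_{i_2}) \preccurlyeq_Q \gamma$ and $d(x_{i_2}, x_{i_3}) \preccurlyeq_Q \gamma$ hold trivially, so Definition~\ref{d1.3}$(iii)$ forces $d(x_{i_1}, x_{i_3}) \preccurlyeq_Q \gamma = d(x_{i_1}, x_{i_2})$. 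This is precisely~\eqref{d3.11:e1}, and together with the facts recorded above it shows that $d$ is a $Q$-ultrametric, i.e.\ $(i)$.
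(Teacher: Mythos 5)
Your proposal is correct and takes essentially the same route as the paper, whose entire proof is the remark that the corollary follows from Corollary~\ref{c3.8} and Corollary~\ref{c3.19}; your argument is a faithful, detailed execution of exactly that derivation. The step you single out as the crux --- applying condition (iii) of Definition~\ref{d1.3} to upgrade the equal-sides permutation supplied by Corollary~\ref{c3.8} to inequality~\eqref{d3.11:e1} for the \emph{given} order \({\preccurlyeq}_{Q}\), rather than only for the intrinsic order \(u_{d}^{t} \cup \Delta_{d(X^{2})}\) that Corollary~\ref{c3.19} yields --- is precisely the detail the paper leaves implicit, and you handle it correctly.
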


The following proposition guarantees, for a given \(Q\)-pseudo\-ultra\-metric \(d\), the presence of the weakest (on \(Q\)) partial order at which \(d\) remains \(Q\)-pseudo\-ultra\-metric.

\begin{proposition}\label{p3.17}
Let \(X\) be a nonempty set, \((Q, {\preccurlyeq}_{Q})\) be a poset and let \(d \colon X^{2} \to Q\) be a \({\preccurlyeq}_{Q}\)-pseudoultrametric. Then there is a unique partial order \({\preccurlyeq}_{Q}^{0}\) on \(Q\) such that \(d\) is a \({\preccurlyeq}_{Q}^{0}\)-pseudoultrametric and the inclusion
\[
{\preccurlyeq}_{Q}^{0} \subseteq {\preccurlyeq}
\]
holds whenever \({\preccurlyeq}\) is a partial order on \(Q\) for which \(d\) is a \({\preccurlyeq}\)-pseudo\-ultrametric.
\end{proposition}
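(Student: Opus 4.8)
The plan is to exhibit the minimal order explicitly and then verify it has all the required properties. Write $Y := d(X^{2})$ and let $q_0 = d(x,x)$ denote the common value of $d$ on the diagonal, which is the smallest element of $(Q, {\preccurlyeq}_Q)$. I would propose as candidate
\[
{\preccurlyeq}_{Q}^{0} := u_{d}^{t} \cup \Delta_{Q} \cup (\{q_0\} \times Q).
\]
The crux is the following necessity claim: if ${\preccurlyeq}$ is \emph{any} partial order on $Q$ for which $d$ is a ${\preccurlyeq}$-pseudoultrametric, then $u_d \subseteq {\preccurlyeq}$. To see this, take \(\<y_1, y_2> \in u_d\), so that \(y_1 = d(x_1, x_3)\) and \(y_2 = d(x_1, x_2) = d(x_2, x_3)\) for suitable points. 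If \(y_1 = y_2\) the membership is trivial. If \(y_1 \neq y_2\), then among the three pairwise distances of the triple \(\<x_1, x_2, x_3>\) the only coinciding pair is the pair of edges meeting at \(x_2\); hence \(x_2\) is the unique vertex that can serve as the middle vertex with equal legs in Definition~\ref{d3.11}. Since $d$ is a ${\preccurlyeq}$-pseudoultrametric, some permutation must realize equal legs together with \(\text{base} \preccurlyeq \text{legs}\), and this forced permutation yields precisely \(y_1 \preccurlyeq y_2\).

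Given the necessity claim, I would invoke Theorem~\ref{t3.15}: since $d$ is a ${\preccurlyeq}_Q$-pseudoultrametric it is, via the identity map, combinatorially similar to a $Q$-pseudoultrametric, so condition \((ii)\) of Theorem~\ref{t3.15} holds and hence so does condition \((iv)\). This tells me that \({\preccurlyeq}_{\Phi} := u_{d}^{t} \cup \Delta_{Y}\) is a partial order on $Y$ with smallest element $q_0$ for which $d$ is a ${\preccurlyeq}_{\Phi}$-pseudoultrametric; in particular \(\{q_0\} \times Y \subseteq {\preccurlyeq}_{\Phi}\), so the restriction of \({\preccurlyeq}_{Q}^{0}\) to $Y$ coincides with \({\preccurlyeq}_{\Phi}\). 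I would then check directly that \({\preccurlyeq}_{Q}^{0}\) is a partial order on $Q$: reflexivity is immediate from \(\Delta_Q\); transitivity is a short case analysis using transitivity of \(u_d^t\) and the fact that a pair \(\<q_0, b>\) composes with anything into \(\<q_0, c>\); antisymmetry uses antisymmetry of \(u_d^t\) together with the fact that $q_0$ is \({\preccurlyeq}_{\Phi}\)-smallest, so nothing other than $q_0$ can lie \({\preccurlyeq}_{Q}^{0}\)-below $q_0$. Since \(q_0 \preccurlyeq_{Q}^{0} q\) for every $q$, the element $q_0$ is the smallest of \((Q, {\preccurlyeq}_{Q}^{0})\); and because all distances of $d$ lie in $Y$, where \({\preccurlyeq}_{Q}^{0}\) agrees with \({\preccurlyeq}_{\Phi}\), the isosceles condition of Definition~\ref{d3.11} for $d$ relative to \({\preccurlyeq}_{Q}^{0}\) reduces to the one relative to \({\preccurlyeq}_{\Phi}\), which holds. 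Thus $d$ is a \({\preccurlyeq}_{Q}^{0}\)-pseudoultrametric.

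For minimality, let ${\preccurlyeq}$ be any partial order on $Q$ making $d$ a ${\preccurlyeq}$-pseudoultrametric. Then each \(d(x,x)\) must equal the smallest element of \((Q, {\preccurlyeq})\), forcing that element to be $q_0$ and hence \(\{q_0\} \times Q \subseteq {\preccurlyeq}\); moreover \(\Delta_Q \subseteq {\preccurlyeq}\) by reflexivity and \(u_d \subseteq {\preccurlyeq}\) by the necessity claim, whence \(u_d^t \subseteq {\preccurlyeq}\) by transitivity. Taking the union gives \({\preccurlyeq}_{Q}^{0} \subseteq {\preccurlyeq}\). Uniqueness is then automatic, since two orders each enjoying the stated minimality property are mutually contained and therefore equal. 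I expect the main obstacle to be the necessity claim \(u_d \subseteq {\preccurlyeq}\): the essential point is that once the three pairwise distances of a triple are not all equal, the strong-triangle requirement is pinned to a single admissible apex, so a membership \(\<y_1, y_2> \in u_d\) cannot be ``satisfied by a different permutation'' and genuinely forces \(y_1 \preccurlyeq y_2\); the remaining verifications are routine relation-algebra bookkeeping.
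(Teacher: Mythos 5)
Your proposal is correct, but it takes a genuinely different route from the paper's proof. The paper constructs \({\preccurlyeq}_{Q}^{0}\) top-down: it is defined as the intersection of the family of all partial orders on \(Q\) for which \(d\) is a pseudoultrametric (a family containing \({\preccurlyeq}_{Q}\), hence nonempty). With that definition, being a partial order and the minimality inclusion are automatic, and the only substantive step is checking the isosceles condition \eqref{d3.11:e1} for the intersection; the paper does this by exactly your ``forced apex'' observation---when \(d(x_1,x_2)=d(x_2,x_3)\neq d(x_1,x_3)\), every admissible order must realize \eqref{d3.11:e1} through the unique apex \(x_2\), so \(d(x_1,x_3)\preccurlyeq_i d(x_1,x_2)\) holds for every admissible \({\preccurlyeq}_i\). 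You build the order bottom-up instead, via the explicit formula \(u_{d}^{t} \cup \Delta_{Q} \cup (\{q_0\} \times Q)\), and the same forced-apex argument reappears as your necessity claim \(u_d \subseteq {\preccurlyeq}\). What your route buys is an explicit description of \({\preccurlyeq}_{Q}^{0}\) with no surjectivity hypothesis: the paper obtains such a formula only in Lemma~\ref{l3.18}, assuming \(d(X^{2}) = Q\), and Remark~\ref{r3.18} shows that \(u_d^t \cup \Delta_Q\) alone fails otherwise; your extra term \(\{q_0\} \times Q\) is precisely the correction that makes the formula valid in general. The price is that the poset axioms are no longer free: you need Theorem~\ref{t3.15} for antisymmetry of \(u_{d}^{t}\), and your transitivity case analysis omits one case---\(\<a,q_0> \in u_d^t\) composed with \(\<q_0,c> \in \{q_0\}\times Q\)---which is settled by the fact you already state in the antisymmetry step (nothing other than \(q_0\) lies \({\preccurlyeq}_{Q}^{0}\)-below \(q_0\), since \(q_0\) is the smallest element of \((d(X^{2}), {\preccurlyeq}_{\Phi})\) and \({\preccurlyeq}_{\Phi}\) is antisymmetric); so this is an omission of bookkeeping, not a genuine gap.
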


\begin{proof}
The uniqueness of \({\preccurlyeq}_{Q}^{0}\) satisfying the desirable conditions is clear. For the proof of existence of \({\preccurlyeq}_{Q}^{0}\), let \(\mathcal{F} = \{{\preccurlyeq}_i \colon i \in I\}\) be the family of all partial orders \({\preccurlyeq}_i\) on \(Q\) for which \(d\) is a \({\preccurlyeq}_i\)-pseudoultrametric. The family \(\mathcal{F}\) is non-void  because \({\preccurlyeq}_{Q} \in \mathcal{F}\). Let us define a binary relation \({\preccurlyeq}_{Q}^{0}\) as the intersection of all \({\preccurlyeq}_i\), i.e., for \(p\), \(q \in Q\),
\[
(\<p,q> \in {\preccurlyeq}_{Q}^{0}) \Leftrightarrow (p \preccurlyeq_i q \text{ holds for every } i \in I).
\]
Then \({\preccurlyeq}_{Q}^{0}\) is a partial order on \(Q\). Since \(d\) is a \({\preccurlyeq}_{Q}\)-pseudoultrametric, the poset \((Q, {\preccurlyeq}_{Q})\) has a smallest element \(q_0\) by definition. It is easy to prove that \(q_0\) is the common smallest element of all posets \((Q, {\preccurlyeq}_i)\), \(i \in I\).

Indeed, since \(d\) is a \({\preccurlyeq}_{Q}\)-pseudoultrametric, we have \(d(x, x) = q_0\). In addition, since, for arbitrary \(i^{*} \in I\), the mapping \(d\) is a \({\preccurlyeq}_{i^*}\)-pseudo\-ultrametric, we also have 
\[
d(x, x) = q_0^{*},
\]
where \(q_0^{*}\) is the smallest element of \((Q, {\preccurlyeq}_{i^*})\). That implies \(q_0^{*} = q_0\). 

Consequently, \(q_0\) is the smallest element of \((Q, {\preccurlyeq}_{Q}^{0})\). 

Hence, to prove that \(d\) is a \({\preccurlyeq}_{Q}^{0}\)-pseudoultrametric it suffices to show that for every triple \(\<x_1, x_2, x_3>\) of points of \(X\) there is a permutation
\[
\begin{pmatrix}
x_1 & x_2 & x_3\\
x_{i_1} & x_{i_2} & x_{i_3}
\end{pmatrix}
\]
such that
\begin{equation}\label{p3.17:e0}
d(x_{i_1}, x_{i_3}) \preccurlyeq_{Q}^{0} d(x_{i_1}, x_{i_2}) \text{ and } d(x_{i_1}, x_{i_2}) = d(x_{i_2}, x_{i_3}).
\end{equation}
Condition~\eqref{p3.17:e0} evidently holds if 
\begin{equation}\label{p3.17:e1}
d(x_1, x_2) = d(x_2, x_3) = d(x_3, x_1).
\end{equation}
If \eqref{p3.17:e1} does not hold, then we may set, for definiteness, that
\begin{equation}\label{p3.17:e2}
d(x_1, x_2) = d(x_2, x_3) \neq d(x_1, x_3).
\end{equation}
(The case when \(d(x_1, x_2)\), \(d(x_2, x_3)\) and \(d(x_1, x_3)\) are pairwise distinct is impossible because \(d\) is a \({\preccurlyeq}_Q\)-pseudoultrametric.) Using \eqref{p3.17:e2} and \eqref{d3.11:e1} we obtain
\begin{equation}\label{p3.17:e3}
d(x_1, x_3) \preccurlyeq_i d(x_1, x_2) \text{ and } d(x_1, x_2) = d(x_2, x_3)
\end{equation}
for every \(i \in I\), that, together with the equality
\[
{\preccurlyeq}_{Q}^{0} = \bigcap_{i \in I} {\preccurlyeq}_{i},
\]
implies
\[
d(x_1, x_3) \preccurlyeq_{Q}^{0} d(x_1, x_2) \text{ and } d(x_1, x_2) = d(x_2, x_3). \qedhere
\]
\end{proof}

\begin{lemma}\label{l3.18}
Let \(X\) be a nonempty set, \((Q, {\preccurlyeq}_{Q})\) be a poset and let \(d \colon X^{2} \to Q\) be a \({\preccurlyeq}_{Q}\)-pseudo\-ultrametric with \(d(X^{2}) = Q\). Then the equality
\begin{equation}\label{l3.18:e1}
{\preccurlyeq}_{Q}^{0} = (u_d^t \cup \Delta_{Q})
\end{equation}
holds, where \(\Delta_{Q} := \{\<q,q> \colon q \in Q\}\).
\end{lemma}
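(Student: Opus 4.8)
The plan is to prove Lemma~\ref{l3.18} by combining Proposition~\ref{p3.17} with the characterization of the order \({\preccurlyeq}_{\Phi}\) coming from Theorem~\ref{t3.15}. Since \(d\) is a \({\preccurlyeq}_{Q}\)-pseudoultrametric with \(d(X^{2}) = Q\), Proposition~\ref{p3.17} guarantees the existence of a unique smallest partial order \({\preccurlyeq}_{Q}^{0}\) making \(d\) a pseudoultrametric. On the other hand, by Proposition~\ref{p3.12} the mapping \(d\) is \(q_0\)-coherent, where \(q_0\) is the smallest element of \((Q, {\preccurlyeq}_{Q})\); hence \(d\) satisfies condition \((iii)\) of Theorem~\ref{t3.15} with \(a_0 = q_0\), and therefore also condition \((iv)\). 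In particular the relation \(u_d^t \cup \Delta_{\Phi(X^{2})} = u_d^t \cup \Delta_{Q}\) is itself a partial order on \(Q\) for which \(d\) is a \((u_d^t \cup \Delta_{Q})\)-pseudoultrametric.

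First I would verify that \(u_d^t \cup \Delta_{Q}\) belongs to the family \(\mathcal{F}\) of Proposition~\ref{p3.17}; this is exactly the content of the implication \((iii) \Rightarrow (iv)\) of Theorem~\ref{t3.15} applied to \(\Phi = d\), once we note that the hypotheses of \((iii)\) hold for \(d\). Consequently, by the minimality of \({\preccurlyeq}_{Q}^{0}\), we obtain the inclusion
\[
{\preccurlyeq}_{Q}^{0} \subseteq (u_d^t \cup \Delta_{Q}).
\]
It remains to establish the reverse inclusion, which is the heart of the argument.

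For the reverse inclusion, the plan is to show directly that \(u_d \subseteq {\preccurlyeq}_{Q}^{0}\), and then to use transitivity and reflexivity of \({\preccurlyeq}_{Q}^{0}\). Indeed, suppose \(\<p, q> \in u_d\). By the definition of \(u_d\) there are \(x_1\), \(x_2\), \(x_3 \in X\) with \(p = d(x_1, x_3)\) and \(q = d(x_1, x_2) = d(x_2, x_3)\). Since \(d\) is a \({\preccurlyeq}_{Q}^{0}\)-pseudoultrametric (by Proposition~\ref{p3.17}), applying condition~\eqref{d3.11:e1} to the triple \(\<x_1, x_2, x_3>\) forces, after checking the possible permutations, the inequality \(d(x_1, x_3) \preccurlyeq_{Q}^{0} d(x_1, x_2)\), that is, \(p \preccurlyeq_{Q}^{0} q\); the two equal values \(d(x_1,x_2) = d(x_2,x_3)\) pin down which leg of the isosceles triple is the ``large'' one and thereby which comparison the pseudoultrametric inequality yields. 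Thus \(u_d \subseteq {\preccurlyeq}_{Q}^{0}\). Since \({\preccurlyeq}_{Q}^{0}\) is a partial order, it is transitive and reflexive, so it contains the transitive closure \(u_d^t\) and the diagonal \(\Delta_{Q}\); hence \((u_d^t \cup \Delta_{Q}) \subseteq {\preccurlyeq}_{Q}^{0}\), which is the desired reverse inclusion. Combining the two inclusions yields~\eqref{l3.18:e1}.

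The main obstacle I anticipate is the careful bookkeeping in the step \(u_d \subseteq {\preccurlyeq}_{Q}^{0}\): the definition of \(u_d\) does not by itself specify which of the three distances in the triple \(\<x_1, x_2, x_3>\) plays the role of \(d(x_{i_1}, x_{i_3})\) in~\eqref{d3.11:e1}, so one must match the configuration \(d(x_1,x_2) = d(x_2,x_3)\) against the permutation guaranteed by the \({\preccurlyeq}_{Q}^{0}\)-pseudoultrametric property and confirm that it genuinely produces \(d(x_1, x_3) \preccurlyeq_{Q}^{0} d(x_1, x_2)\) rather than a vacuous or reversed comparison. The equality \(d(X^{2}) = Q\) is used to ensure that every element of \(Q\) arises as a value of \(d\), so that the relation \(u_d\) lives on all of \(Q\) and the comparison in \({\preccurlyeq}_{Q}^{0}\) is fully captured by~\eqref{d3.11:e1}.
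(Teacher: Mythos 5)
There is a genuine gap in the first half of your argument. To invoke the implication \((iii) \Rightarrow (iv)\) of Theorem~\ref{t3.15} you must verify all four clauses of condition \((iii)\), and one of them is that the transitive closure \(u_d^t\) is \emph{antisymmetric}. You derive condition \((iii)\) from symmetry, the isosceles property of Definition~\ref{d3.11}, and the \(q_0\)-coherence supplied by Proposition~\ref{p3.12}; but antisymmetry of \(u_d^t\) does not follow from these three properties. The metric \(\rho\) of Example~\ref{ex3.4} is symmetric, \(0\)-coherent, and all of its triangles are isosceles, yet both \(\langle \pi, \pi/2\rangle\) and \(\langle \pi/2, \pi\rangle\) belong to \(u_\rho\), so \(u_\rho^t\) is not antisymmetric. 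Thus the step ``hence \(d\) satisfies condition \((iii)\)'' is a non sequitur as written: it silently uses the full hypothesis that \(d\) is a \({\preccurlyeq}_Q\)-pseudoultrametric, and extracting antisymmetry from that hypothesis requires an argument. Two short repairs are available. Either observe that \(d\) is combinatorially similar to itself (via identity mappings), so condition \((ii)\) of Theorem~\ref{t3.15} holds outright and \((iv)\) follows; or run your own bookkeeping argument first against \({\preccurlyeq}_Q\) rather than \({\preccurlyeq}_Q^0\): it yields \(u_d \subseteq {\preccurlyeq}_Q\), hence \(u_d^t \subseteq {\preccurlyeq}_Q\) by transitivity of \({\preccurlyeq}_Q\), and antisymmetry of \(u_d^t\) is then inherited from that of \({\preccurlyeq}_Q\). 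The latter is essentially what the paper does when it says to argue ``as in the first part'' of the proof of Theorem~\ref{t3.7} with \({\preccurlyeq}_Q\) in place of \(\leqslant\).

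Apart from this, your proof is correct and structurally the same as the paper's: the paper likewise shows that \(u_d^t \cup \Delta_Q\) is a partial order with respect to which \(d\) is a pseudoultrametric, so that the minimality clause of Proposition~\ref{p3.17} gives \({\preccurlyeq}_Q^0 \subseteq u_d^t \cup \Delta_Q\), and it obtains the reverse inclusion from \(u_d \subseteq {\preccurlyeq}\) for every order \({\preccurlyeq}\) making \(d\) a pseudoultrametric. Your variant of the latter step, verifying \(u_d \subseteq {\preccurlyeq}_Q^0\) directly, is legitimate because Proposition~\ref{p3.17} asserts that \(d\) is a \({\preccurlyeq}_Q^0\)-pseudoultrametric; and your permutation analysis (when \(d(x_1,x_2) = d(x_2,x_3)\), the only permutations admissible in \eqref{d3.11:e1} have \(d(x_1,x_3)\) as the base, forcing \(d(x_1,x_3) \preccurlyeq_Q^0 d(x_1,x_2)\)) is exactly the point on which both halves of the proof, including the missing antisymmetry step, rest. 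You are also right that \(d(X^2) = Q\) is what makes \(q_0\) the smallest element of \((Q, u_d^t \cup \Delta_Q)\); compare Remark~\ref{r3.18}.
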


\begin{proof}
As in the second part of the proof of Theorem~\ref{t3.7} we see that \(u_{d}^{t} \cup \Delta_{Q}\) is reflexive and transitive. Using \({\preccurlyeq}_{Q}\) instead of \(\leqslant\) and arguing as in the first part of that proof we obtain the antisymmetry of \(u_{d}^{t} \cup \Delta_{Q}\). Consequently, \(u_{d}^{t} \cup \Delta_{Q}\) is a partial order on \(Q\).

Let \({\preccurlyeq}\) be an arbitrary partial order on \(Q\) for which \(d\) is a \({\preccurlyeq}\)-pseudo\-ultrametric. Then, using Definition~\ref{d3.11} and the definition of \(u_d\), we see that
\[
u_{d} \subseteq {\preccurlyeq}.
\]
The last inclusion implies 
\[
(u_{d}^{t} \cup \Delta_{Q}) \subseteq ({\preccurlyeq}^{t} \cup \Delta_{Q}) = {\preccurlyeq}.
\]
Consequently, \({\preccurlyeq}_{Q}^{0} \supseteq (u_{d}^{t} \cup \Delta_{Q})\) holds.

From the definition of the relation \(u_{d}\), Definition~\ref{d3.11} and the fact that \(d\) is \({\preccurlyeq}_{Q}\)-pseudo\-ultrametric it follows that \(d\) is a \((u_{d}^{t} \cup \Delta_{Q})\)-pseudo\-ultra\-metric. Thus, equality~\eqref{l3.18:e1} holds.
\end{proof}

\begin{remark}\label{r3.18}
Equality~\eqref{l3.18:e1} does not hold if \(d(X^{2}) \neq Q\). Indeed, if \(q_1 \in Q \setminus d(X^{2})\), then we evidently have \(q_1 \notin u_d^t\), that implies
\[
\<q, q_1> \notin (u_d^t \cup \Delta_{Q})
\]
for every \(q \in Q \setminus \{q_1\}\). Consequently, the poset \((Q, u_d^t \cup \Delta_{Q})\) does not have any smallest element. The last statement contradicts \eqref{l3.18:e1}, because the smallest element \(q_0 \in d(X^{2})\) of \((Q, {\preccurlyeq}_Q)\) is also the smallest element of \((Q, {\preccurlyeq}_Q^{0})\).
\end{remark}

Results of the present section are based on the fact that, for all posets \((Q, {\preccurlyeq}_Q)\) and \((L, {\preccurlyeq}_L)\) with the smallest elements \(q_0 \in Q\) and \(l_0 \in L\), for every isotone injection \(f \colon Q \to L\) satisfying the condition \(f(q_0) = l_0\), and for each \(Q\)-pseudoultrametric \(d\), the mappings \(d\) and \(f \circ d\) are combinatorially similar. Moreover, in this case the transformation \(d \mapsto f \circ d\) converts the \(Q\)-pseudoultrametrics into \(L\)-pseudoultrametrics.

\begin{proposition}\label{p3.23}
Let \((Q, {\preccurlyeq}_Q)\) and \((L, {\preccurlyeq}_L)\) be posets with the smallest elements \(q_0 \in Q\) and \(l_0 \in L\). The following conditions are equivalent for every mapping \(f \colon Q \to L\).
\begin{enumerate}
\item [\((i)\)] \(f \circ d\) is a \(L\)-pseudoultrametric whenever \(d\) is a \(Q\)-pseudo\-ultra\-metric.
\item [\((ii)\)] \(f \circ d\) is a \(L\)-pseudoultrametric whenever \(d\) is a \(Q\)-ultra\-metric.
\item [\((iii)\)] \(f\) is isotone and \(f(q_0) = l_0\) holds.
\end{enumerate}
\end{proposition}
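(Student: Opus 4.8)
The plan is to prove the three conditions equivalent through the cycle $(i) \Rightarrow (ii) \Rightarrow (iii) \Rightarrow (i)$, in which the first and the last implications are routine and the middle one carries all the weight. The implication $(i) \Rightarrow (ii)$ is immediate: by Definition~\ref{d3.11} every $Q$-ultrametric is in particular a $Q$-pseudoultrametric, so the hypothesis of $(i)$ specializes at once to $(ii)$.

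For $(iii) \Rightarrow (i)$ I would transport the defining properties of an arbitrary $Q$-pseudoultrametric $d$ through $f$. Symmetry of $f \circ d$ is inherited from that of $d$; the chain $(f \circ d)(x,x) = f(d(x,x)) = f(q_0) = l_0$ uses $d(x,x) = q_0$ together with the hypothesis $f(q_0) = l_0$, and shows that the diagonal of $f \circ d$ lands on the smallest element of $L$; and for each triple the very permutation furnished by Definition~\ref{d3.11} for $d$ works verbatim for $f \circ d$, because applying the isotone map $f$ preserves both the inequality $d(x_{i_1}, x_{i_3}) \preccurlyeq_Q d(x_{i_1}, x_{i_2})$ and the equality $d(x_{i_1}, x_{i_2}) = d(x_{i_2}, x_{i_3})$ of~\eqref{d3.11:e1}. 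Hence $f \circ d$ is an $L$-pseudoultrametric.

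The main work is $(ii) \Rightarrow (iii)$. To extract $f(q_0) = l_0$ I would feed $(ii)$ the one-point $Q$-ultrametric $d(x,x) = q_0$ on a singleton; then $f \circ d$ being an $L$-pseudoultrametric forces $f(q_0) = (f \circ d)(x,x) = l_0$. To extract isotonicity, fix $p \preccurlyeq_Q q$. The cases $p = q$ and $p = q_0$ are trivial, the latter because $f(p) = l_0$ is already the least element of $L$, and these reductions simultaneously dispose of $q = q_0$ (if $q = q_0$ then antisymmetry gives $p = q_0 = q$). So I may assume $p$ and $q$ are distinct and both different from $q_0$. The key device is the three-point map on $X = \{x_1, x_2, x_3\}$ given by $d(x_1, x_2) = d(x_2, x_3) = q$ and $d(x_1, x_3) = p$, extended symmetrically with $q_0$ on the diagonal. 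I would verify it is a genuine $Q$-ultrametric: positivity holds since $p, q \neq q_0$, and the triple condition~\eqref{d3.11:e1} is witnessed by the identity permutation using exactly $p \preccurlyeq_Q q$ (for the degenerate triples the apex is placed at the repeated vertex, where $q_0$ sits as the smallest element).

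Applying $(ii)$ to this $d$, the mapping $f \circ d$ is an $L$-pseudoultrametric, so some permutation of $\langle x_1, x_2, x_3 \rangle$ realizes~\eqref{d3.11:e1} for the values $f(q), f(q), f(p)$. If $f(p) = f(q)$ the desired inequality holds by reflexivity; otherwise the two equal ``legs'' required by~\eqref{d3.11:e1} can only be the two $f(q)$-edges, which share the vertex $x_2$, so the ``base'' must be the $f(p)$-edge and the inequality part of~\eqref{d3.11:e1} reads precisely $f(p) \preccurlyeq_L f(q)$. The one delicate point of the whole argument is exactly this case analysis: checking that, when $f(p) \neq f(q)$, no permutation with apex at $x_1$ or $x_3$ can meet the ``equal legs'' demand, so the $L$-pseudoultrametric condition is forced to produce the comparison we want. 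Closing the cycle with $(iii) \Rightarrow (i)$ and $(i) \Rightarrow (ii)$ then completes the proof.
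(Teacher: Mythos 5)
Your proof is correct and takes essentially the same route as the paper: the same cycle \((i)\Rightarrow(ii)\Rightarrow(iii)\Rightarrow(i)\), with the identical three-point \(Q\)-ultrametric \(d(x_1,x_2)=d(x_2,x_3)=q\), \(d(x_1,x_3)=p\) carrying the implication \((ii)\Rightarrow(iii)\); in fact you are slightly more careful than the paper, which never separates off the case \(p=q_0\) (where its constructed \(d\) would violate positivity), whereas your remark that \(f(p)=l_0\) is already the least element of \(L\) disposes of it properly. One small slip in a parenthetical: when verifying your constructed map on degenerate triples such as \(\langle x_1,x_1,x_2\rangle\), the witnessing permutation must put the \emph{non-repeated} vertex in the middle position, so that the two equal legs both carry the value \(q\) and the base carries \(q_0\preccurlyeq_Q q\); placing the apex at the repeated vertex, as you wrote, gives legs \(q_0\) and \(q\neq q_0\), for which \eqref{d3.11:e1} fails, so that particular permutation is not the right witness even though the fact being checked is true.
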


\begin{proof}
\((i) \Rightarrow (ii)\). This is evidently valid.

\((ii) \Rightarrow (iii)\). Suppose statement \((ii)\) is valid. Then, for every \(Q\)-ultrametric space \((X, d)\) and for every \(x \in X\), the equalities
\[
f(q_0) = f(d(x, x)) = l_0
\]
hold. Let \(q_1\), \(q_2 \in Q\) such that \(q_1 \preccurlyeq_Q q_2\). We must prove the inequality
\begin{equation}\label{p3.23:e1}
f(q_1) \preccurlyeq_L f(q_2).
\end{equation}
This is trivial if \(f(q_1) = f(q_2)\). Suppose \(f(q_1) \neq f(q_2)\) and \(X = \{x_1, x_2, x_3\}\). Let us define \(d \colon X^{2} \to L\) as
\begin{equation}\label{p3.23:e2}
d(x_1, x_2) = d(x_2, x_3) = q_2, \quad \text{and} \quad d(x_1, x_3) = q_1,
\end{equation}
and \(d(x_1, x_1) = d(x_2, x_2) = d(x_3, x_3) = q_0\). Then \(d\) is a \(Q\)-ultrametric and \(f \circ d\) is a \(L\)-pseudo\-ultra\-metric. Inequality \eqref{p3.23:e1} follows from \(f(q_1) \neq f(q_2)\), \eqref{p3.23:e2} and \eqref{d3.11:e1}.

\((iii) \Rightarrow (i)\). The validity of this implication follows directly from the definition of isotone mappings and the definition of poset-valued pseudoultrametrics.
\end{proof}

\begin{corollary}\label{c3.24}
Let \((Q, {\preccurlyeq}_Q)\) and \((L, {\preccurlyeq}_L)\) be posets with the smallest elements \(q_0 \in Q\) and \(l_0 \in L\). Then the following conditions are equivalent for every mapping \(f \colon Q \to L\).
\begin{enumerate}
\item [\((i)\)] \(f \circ d\) is a \(L\)-ultrametric whenever \(d\) is a \(Q\)-ultrametric.
\item [\((ii)\)] \(f\) is isotone and the equivalence
\begin{equation}\label{c3.24:e1}
(f(q) = l_0) \Leftrightarrow (q = q_0)
\end{equation}
is valid for every \(q \in Q\).
\end{enumerate}
\end{corollary}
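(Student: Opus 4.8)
The plan is to deduce Corollary~\ref{c3.24} from Proposition~\ref{p3.23}, isolating the extra positivity requirement that distinguishes an ultrametric from a pseudoultrametric. Recall that, by Definition~\ref{d3.11}, a \(Q\)-ultrametric is precisely a \(Q\)-pseudoultrametric \(d\) satisfying \(d(x,y) = q_0 \Leftrightarrow x = y\), and similarly for \(L\). Thus for \(f \circ d\) to be an \(L\)-ultrametric it must be an \(L\)-pseudoultrametric and, in addition, satisfy \((f \circ d)(x,y) = l_0 \Leftrightarrow x = y\). I would treat these two aspects separately, letting Proposition~\ref{p3.23} carry the pseudoultrametric part and handling positivity by hand through the equivalence~\eqref{c3.24:e1}.

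For \((i) \Rightarrow (ii)\), I would first observe that every \(Q\)-ultrametric is a \(Q\)-pseudoultrametric and every \(L\)-ultrametric is an \(L\)-pseudoultrametric, so condition \((i)\) of the corollary implies condition \((ii)\) of Proposition~\ref{p3.23}. Hence \(f\) is isotone and \(f(q_0) = l_0\); this already yields the implication \((q = q_0) \Rightarrow (f(q) = l_0)\) of \eqref{c3.24:e1}. For the reverse implication I would argue by contradiction: suppose \(f(q) = l_0\) for some \(q \neq q_0\). Taking a two-point set \(X = \{x_1, x_2\}\) with \(x_1 \neq x_2\) and defining \(d(x_1, x_2) = d(x_2, x_1) = q\) and \(d(x_1, x_1) = d(x_2, x_2) = q_0\), one checks readily that \(d\) is a \(Q\)-ultrametric: every triple over two points is degenerate, so the permutation condition of Definition~\ref{d3.11} holds trivially, and positivity holds because \(q \neq q_0\). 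Then \((f \circ d)(x_1, x_2) = f(q) = l_0\) while \(x_1 \neq x_2\), so \(f \circ d\) fails positivity and is not an \(L\)-ultrametric, contradicting \((i)\).

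For \((ii) \Rightarrow (i)\), I would start from the fact that isotonicity of \(f\) together with \(f(q_0) = l_0\) (the case \(q = q_0\) of \eqref{c3.24:e1}) gives, via the implication \((iii) \Rightarrow (i)\) of Proposition~\ref{p3.23}, that \(f \circ d\) is an \(L\)-pseudoultrametric whenever \(d\) is a \(Q\)-pseudoultrametric, in particular whenever \(d\) is a \(Q\)-ultrametric. It then remains to verify positivity of \(f \circ d\). If \(x = y\), then \(d(x,y) = q_0\) and \(f(d(x,y)) = l_0\); conversely, if \(f(d(x,y)) = l_0\), then \eqref{c3.24:e1} forces \(d(x,y) = q_0\), whence \(x = y\) because \(d\) is a \(Q\)-ultrametric. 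Therefore \(f \circ d\) is an \(L\)-ultrametric.

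The only genuinely new content beyond Proposition~\ref{p3.23} is the reverse implication of \eqref{c3.24:e1}, and I expect the construction of the two-point \(Q\)-ultrametric witnessing the contradiction to be the main (though minor) obstacle; the degenerate case \(Q = \{q_0\}\) requires no separate attention, since then the nontrivial direction of \eqref{c3.24:e1} is vacuous.
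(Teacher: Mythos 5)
Your proposal is correct and follows essentially the same route as the paper: both directions lean on Proposition~\ref{p3.23} for the pseudoultrametric part, the implication \((i) \Rightarrow (ii)\) is settled by the same counterexample (the paper uses an ``equilateral'' \(Q\)-ultrametric on an arbitrary set with at least two points, of which your two-point space is the minimal instance), and your direct verification of positivity in \((ii) \Rightarrow (i)\) is just the contrapositive-free phrasing of the paper's argument by contradiction. No gaps; the degenerate-triple check in the two-point construction is the only detail you gloss over, and it is routine.
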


\begin{proof}
\((i) \Rightarrow (ii)\). Let \((i)\) hold. Then, by Proposition~\ref{p3.23}, \(f\) is isotone and \(f(q_0) = l_0\) holds. Thus, to prove \((ii)\) it suffices to show that \(f(q) = l_0\) implies \(q = q_0\). Suppose contrary that there is \(q_1 \in Q\) such that \(q_1 \neq q_0\) and \(f(q_1) = l_0\).

Let \(X\) be an arbitrary set with \(|X| \geqslant 2\). The function \(d \colon X^{2} \to Q\), defined as
\begin{equation}\label{c3.24:e2}
d(x, y) = \begin{cases}
q_0 & \text{if } x = y,\\
q_1 & \text{if } x \neq y,
\end{cases}
\end{equation}
is a \(Q\)-ultrametric on \(X\). The equalities \(f(q_0) = l_0\), \(f(q_1) = l_0\) and \eqref{c3.24:e2} imply \(f(d(x, y)) = l_0\) for all \(x\), \(y \in X\). Hence, \(f \circ d\) is not a \(L\)-ultra\-metric on \(X\), which contradicts condition~\((i)\).

\((ii) \Rightarrow (i)\). Suppose \((ii)\) holds, but there are a set \(X\) and a \(Q\)-ultra\-metric \(d \colon X^{2} \to Q\) such that \(f \circ d\) is not a \(L\)-ultrametric. Then we evidently have \(|X| \geqslant 2\). Moreover, Proposition~\ref{p3.23} implies that \(f \circ d\) is a \(L\)-pseudoultrametric. Consequently, there are \(x_1\), \(x_2 \in X\) such that \(x_1 \neq x_2\) and 
\begin{equation}\label{c3.24:e3}
f(d(x_1, x_2)) = l_0.
\end{equation}
Since \(d\) is a \(Q\)-ultrametric,
\begin{equation}\label{c3.24:e4}
d(x_1, x_2) \neq q_0
\end{equation}
holds. From \eqref{c3.24:e3} and \eqref{c3.24:e4} it follows that \eqref{c3.24:e1} is false with \(q = d(x_1, x_2)\), contrary to condition \((ii)\).
\end{proof}

The following example shows that we cannot replace statement \((i)\) of Corollary~\ref{c3.24} by the statement 
\begin{itemize}
\item \(f \circ d\) is an ultrametric distance w.r.t \((L, {\preccurlyeq}_L)\) whenever \(d\) is an ultrametric distance w.r.t \((Q, {\preccurlyeq}_Q)\)
\end{itemize}
leaving statement \((ii)\) unchanged.

\begin{example}\label{ex3.26}
Let \(P\) and \(Q\) be sets with \(|P| = |Q| \geqslant 4\) and let \({\preccurlyeq}_{P}\) be a linear order on \(P\) with a smallest element \(p_0\). Let us define a binary relation \({\preccurlyeq}_{Q}\) on \(Q\) by the rule:
\begin{equation}\label{ex3.26:e1}
(\<q_1, q_2> \in {\preccurlyeq}_{Q}) \Leftrightarrow (q_1 = q_2 \text{ or } q_1 = q_0).
\end{equation}
Then \({\preccurlyeq}_{Q}\) is a partial order on \(Q\) and, for a set \(X = \{x_1, x_2, x_3\}\), a mapping \(d \colon X^{2} \to Q\) is an ultrametric distance w.r.t. \((Q, {\preccurlyeq}_{Q})\) if and only if \(d\) is symmetric and 
\[
(d(x, y) = q_0) \Leftrightarrow (x = y)
\]
holds for all \(x\), \(y \in X\). Since \(|Q| \geqslant 4\) holds, there is an ultrametric distance \(d^{*} \colon X^{2} \to Q\) such that \(d^{*}(x_1, x_2)\), \(d^{*}(x_2, x_3)\), \(d^{*}(x_3, x_1)\) are pairwise distinct. It follows directly from \eqref{ex3.26:e1} and Definition~\ref{d3.5} that a function \(f \colon Q \to P\) is isotone if and only if \(f(q_0) = p_0\). Now, using the equality \(|P| = |Q|\) we can find an isotone bijection \(f^{*} \colon Q \to P\) such that
\[
(f^{*}(q) = p_0) \Leftrightarrow (q = q_0)
\]
is valid for every \(q \in Q\). Since \((P, {\preccurlyeq}_{P})\) is totally ordered, and \(f^{*}\) is bijective, and \(d^{*}(x_1, x_2)\), \(d^{*}(x_2, x_3)\), \(d^{*}(x_3, x_1)\) are pairwise distinct, we can find a permutation
\[
\begin{pmatrix}
x_1 & x_2 & x_3\\
x_{i_1} & x_{i_2} & x_{i_3}
\end{pmatrix}
\]
for which
\[
f^{*}(d^{*}(x_{i_1}, x_{i_2})) \prec_{P} f^{*}(d^{*}(x_{i_2}, x_{i_3})) \prec_{P} f^{*}(d^{*}(x_{i_1}, x_{i_3})).
\]
From Definition~\ref{d1.3} it follows that the mapping
\[
X^{2} \xrightarrow{d^{*}} Q \xrightarrow{f^{*}} P
\]
is not an ultrametric distance w.r.t. \((P, {\preccurlyeq}_{P})\).
\end{example}

\begin{remark}\label{r3.25}
For the case of standard ultrametrics and pseudoultrametrics Proposition~\ref{p3.23} and Corollary~\ref{c3.24} are known. In particular, Proposition~\ref{p3.23} is a generalization of Proposition~2.4 \cite{Dov2019v2} and, respectively, Corollary~\ref{c3.24} is a generalization of Theorem~9 \cite{PTAbAppAn2014}.
\end{remark}

\section{From weak similarities to combinatorial similarities and back}

Let us expand the notion of weak similarity to the case of poset-valued pseudoultrametrics.

\begin{definition}\label{d3.13}
Let \((Q_i, {\preccurlyeq}_{Q_i})\) be a poset, and \((X_i, d_{i})\) be a \(Q_i\)-pseudo\-ultra\-metric space, and let \(Y_i := d_{i}(X_i^2)\), \(i = 1\), \(2\). A bijection \(\Phi \colon X_1 \to X_2\) is a \emph{weak similarity} for \(d_1\) and \(d_2\) if there is an isomorphism \(f \colon Y_1 \to Y_2\) of the subposet \((Y_1, {\preccurlyeq}_{Y_1})\) of the poset \((Q_1, {\preccurlyeq}_{Q_1})\) and the subposet \((Y_2, {\preccurlyeq}_{Y_2})\) of the poset \((Q_2, {\preccurlyeq}_{Q_2})\) such that
\begin{equation}\label{d3.13:e1}
d_1(x, y) = f(d_2(\Phi(x), \Phi(y)))
\end{equation}
for all \(x\), \(y \in X_1\).
\end{definition}

\begin{remark}\label{r4.2}
For every totally ordered set \((P_1, \preccurlyeq_{P_1})\) and arbitrary poset \((P_2, \preccurlyeq_{P_2})\), every isotone bijection \(f \colon P_1 \to P_2\) is an isomorphism of \((P_1, \preccurlyeq_{P_1})\) and \((P_2, \preccurlyeq_{P_2})\). Thus, Definition~\ref{d2.9} and Definition~\ref{d3.13} are equivalent for the case when \((Q_1, \preccurlyeq_{Q_1})\) and \((Q_2, \preccurlyeq_{Q_2})\) coincide with \((\RR^{+}, \leqslant)\).
\end{remark}

The following is a generalization of Proposition~\ref{p2.11}.

\begin{proposition}\label{p3.16}
Let \((Q_i, {\preccurlyeq}_{Q_i})\) be a poset and \((X_i, d_{i})\) be a \(Q_i\)-pseudoultrametric space, \(i=1\), \(2\). Then every weak similarity for \(d_1\) and \(d_2\) is a combinatorial similarity for \(d_{1}\) and \(d_{2}\).
\end{proposition}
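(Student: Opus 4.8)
The plan is to show that the very data witnessing a weak similarity already witness a combinatorial similarity, so that no new object needs to be constructed; this is the poset-valued analogue of the argument behind Proposition~\ref{p2.11}, and like it the proof does not touch the pseudoultrametric axioms at all.

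First I would unpack Definition~\ref{d3.13}. A weak similarity \(\Phi \colon X_1 \to X_2\) for \(d_1\) and \(d_2\) comes equipped with an order isomorphism \(f\) between the subposet \((Y_1, {\preccurlyeq}_{Y_1})\) of \((Q_1, {\preccurlyeq}_{Q_1})\) and the subposet \((Y_2, {\preccurlyeq}_{Y_2})\) of \((Q_2, {\preccurlyeq}_{Q_2})\), where \(Y_i = d_i(X_i^2)\), such that \(d_1(x, y) = f(d_2(\Phi(x), \Phi(y)))\) holds for all \(x\), \(y \in X_1\). In particular \(\Phi\) is a bijection of \(X_1\) onto \(X_2\), while \(f\) (equivalently its inverse) is a bijection between the ranges \(d_1(X_1^2)\) and \(d_2(X_2^2)\).

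Next I would match this against Definition~\ref{d2.17}, taking the combinatorial similarity to be \(g := \Phi \colon X_1 \to X_2\) and the accompanying bijection of ranges to be \(f\). Since an order isomorphism is in particular a bijection of the underlying sets, \(f\) already satisfies the bijectivity requirement of Definition~\ref{d2.17}; its order-preserving property is simply surplus and plays no role. The identity \(d_1(x, y) = f(d_2(\Phi(x), \Phi(y)))\) is then exactly the commutativity equation~\eqref{d2.17:e1} required for \(\Phi\) to be a combinatorial similarity for \(d_1\) and \(d_2\), so the verification reduces to reading off that the equation and the two bijections coincide.

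The only point requiring care — and it is the sole, minor obstacle — is the bookkeeping of roles and directions: one must confirm that \(d_1\) and \(d_2\) sit on the correct sides of the defining equation and that the domain and codomain of the range-bijection match \(d_2(X_2^2)\) and \(d_1(X_1^2)\) in the way Definition~\ref{d2.17} prescribes. Because an order isomorphism and its inverse are both bijections of the underlying sets, whichever direction is demanded is available, and the order conditions are never needed. This makes precise the sense in which weak similarity is a strengthening of combinatorial similarity, and with the matching in hand the proposition follows at once.
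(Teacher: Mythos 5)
Your proposal is correct and takes essentially the same route as the paper: both proofs simply unwind Definition~\ref{d3.13} against Definition~\ref{d2.17}, observing that the bijections \(\Phi\) and \(f\) supplied by the weak similarity already satisfy equation~\eqref{d2.17:e1}, with the order-isomorphism property of \(f\) left unused (up to the same direction-of-\(f\) bookkeeping you flag). The paper's proof merely adds the logically unnecessary remark that \(f\) matches up the smallest elements of the two ranges, in agreement with Proposition~\ref{p3.12} and the second statement of Proposition~\ref{p2.4}.
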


\begin{proof}
The proposition can be directly driven from definitions. We just notice that if \(Y_1 := d_{1}(X_1^2)\) and \(Y_2 := d_{2}(X_2^2)\), and \(f \colon Y_1 \to Y_2\) is an isomorphism of the subposet \((Y_1, {\preccurlyeq}_{Y_1})\) of \((Q_1, {\preccurlyeq}_{Q_1})\) and the subposet \((Y_2, {\preccurlyeq}_{Y_2})\) of \((Q_2, {\preccurlyeq}_{Q_2})\), and~\eqref{d3.13:e1} holds for all \(x\), \(y \in X_1\), then we have \(q_2 = f(q_1)\), where \(q_i \in d_{i}(X_i^2)\) is the smallest element of \((Q_i, {\preccurlyeq}_{Q_i})\), \(i = 1\), \(2\), that agrees with Proposition~\ref{p3.12} and the second statement of Proposition~\ref{p2.4}.
\end{proof}

\begin{theorem}\label{t4.3}
Let \(X_i\) be a nonempty set and let \(\Phi_i\) be a mapping with \(\dom \Phi = X_i^{2}\), \(i = 1\), \(2\). Suppose
\begin{equation}\label{t4.3:e1}
{\preccurlyeq}_{1} := u_{\Phi_1}^{t} \cup \Delta_{\Phi_1(X_1^{2})} \quad \text{and} \quad {\preccurlyeq}_{2} := u_{\Phi_2}^{t} \cup \Delta_{\Phi_2(X_2^{2})}
\end{equation}
are partial orders on \(\Phi_1(X_1^{2})\) and, respectively, on \(\Phi_2(X_2^{2})\). If \(\Phi_i\) is a \({\preccurlyeq}_{i}\)-pseudoultrametric, \(i = 1\), \(2\), then the following conditions are equivalent for every mapping \(g \colon X_1 \to X_2\).
\begin{enumerate}
\item [\((i)\)] \(g\) is a weak similarity for \(\Phi_1\) and \(\Phi_2\).
\item [\((ii)\)] \(g\) is a combinatorial similarity for \(\Phi_1\) and \(\Phi_2\).
\end{enumerate}
\end{theorem}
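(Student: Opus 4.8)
The plan is to treat the two implications asymmetrically, since $(i)\Rightarrow(ii)$ is essentially free while $(ii)\Rightarrow(i)$ carries all the content. For $(i)\Rightarrow(ii)$ I would simply invoke Proposition~\ref{p3.16}: by hypothesis each $\preccurlyeq_i$ is a partial order on $\Phi_i(X_i^{2})$, and since $\Phi_i$ is a $\preccurlyeq_i$-pseudoultrametric the poset $(\Phi_i(X_i^{2}), \preccurlyeq_i)$ has a smallest element, so the framework of Proposition~\ref{p3.16} applies with $Q_i = \Phi_i(X_i^{2})$. Hence every weak similarity for $\Phi_1$ and $\Phi_2$ is automatically a combinatorial similarity for them.

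The heart of the proof is $(ii)\Rightarrow(i)$. Suppose $g \colon X_1 \to X_2$ is a combinatorial similarity. By Definition~\ref{d2.17} there is a bijection $f \colon \Phi_2(X_2^{2}) \to \Phi_1(X_1^{2})$ with $\Phi_1(x, y) = f(\Phi_2(g(x), g(y)))$ for all $x$, $y \in X_1$; comparing with Definition~\ref{d3.13}, to conclude that $g$ is a weak similarity it remains only to show that this \emph{same} $f$ is an order isomorphism between $(\Phi_2(X_2^{2}), \preccurlyeq_2)$ and $(\Phi_1(X_1^{2}), \preccurlyeq_1)$. Since both orders are built from the relations $u_{\Phi_i}$ by the recipe~\eqref{t4.3:e1}, the key step is to prove that $f$ transports $u_{\Phi_2}$ onto $u_{\Phi_1}$, i.e.\ that for all $a$, $b \in \Phi_2(X_2^{2})$ one has $\langle a, b\rangle \in u_{\Phi_2}$ if and only if $\langle f(a), f(b)\rangle \in u_{\Phi_1}$.

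To establish this I would unwind the definition~\eqref{e2.16} of $u_\Phi$. If $\langle a, b\rangle \in u_{\Phi_2}$, pick witnesses $z_1, z_2, z_3 \in X_2$ with $a = \Phi_2(z_1, z_3)$ and $b = \Phi_2(z_1, z_2) = \Phi_2(z_2, z_3)$, and set $w_i = g^{-1}(z_i)$, which is possible because $g$ is a bijection. Applying $f$ to the commuting relation then gives $f(a) = \Phi_1(w_1, w_3)$ and $f(b) = \Phi_1(w_1, w_2) = \Phi_1(w_2, w_3)$, so $\langle f(a), f(b)\rangle \in u_{\Phi_1}$ with witnesses $w_1, w_2, w_3$. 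The reverse implication is symmetric: applying $f^{-1}$ to the commuting relation shows that $g^{-1}$ is a combinatorial similarity for $\Phi_2$ and $\Phi_1$ with bijection $f^{-1}$, so the same argument yields $\langle c, d\rangle \in u_{\Phi_1}$ implies $\langle f^{-1}(c), f^{-1}(d)\rangle \in u_{\Phi_2}$. Once $f$ is known to transport $u_{\Phi_2}$ onto $u_{\Phi_1}$ in both directions, it transports every finite power $u_{\Phi_2}^{n}$ onto $u_{\Phi_1}^{n}$ by chaining witnesses, hence the transitive closures $u_{\Phi_2}^{t}$ onto $u_{\Phi_1}^{t}$ by~\eqref{e3.2}; and it trivially respects the diagonals. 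In view of~\eqref{t4.3:e1} this means exactly that $\langle a, b\rangle \in {\preccurlyeq_2}$ iff $\langle f(a), f(b)\rangle \in {\preccurlyeq_1}$, so $f$ is the required order isomorphism and $g$ is a weak similarity.

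The step I expect to require the most care is the transport of $u_{\Phi_2}$ onto $u_{\Phi_1}$, together with the verification that this transport is inherited by the transitive closure; the underlying point is that the relation $u_\Phi$, and therefore the canonical order $u_\Phi^{t} \cup \Delta$, is a purely combinatorial invariant of $\Phi$, so any bijection $f$ produced by a combinatorial similarity is forced to respect it. Apart from this, the only things demanding attention are the bookkeeping of the two directions of the equivalence (handled by passing to $g^{-1}$ and $f^{-1}$) and the witness-chasing itself, both of which are routine.
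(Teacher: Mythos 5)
Your proposal is correct and takes essentially the same route as the paper: \((i) \Rightarrow (ii)\) is dispatched by Proposition~\ref{p3.16}, and \((ii) \Rightarrow (i)\) is proved by showing that the bijection \(f\) furnished by the combinatorial similarity transports \(u_{\Phi_2}\) onto \(u_{\Phi_1}\) in both directions, hence (via the transitive closures and the diagonals) is an order isomorphism of \((\Phi_2(X_2^2), {\preccurlyeq}_2)\) and \((\Phi_1(X_1^2), {\preccurlyeq}_1)\), making \(g\) a weak similarity. The only difference is one of detail: the paper asserts that the transport condition~\eqref{t4.3:e3} ``follows directly from \eqref{t4.3:e2} and the definitions,'' whereas you carry out the witness-chasing and the induction on powers \(u_{\Phi}^{n}\) explicitly.
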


\begin{proof}
Suppose \(\Phi_i\) is a \({\preccurlyeq}_{i}\)-pseudoultrametric, \(i = 1\), \(2\). 

\((i) \Rightarrow (ii)\). This is valid by Proposition~\ref{p3.16}.

\((ii) \Rightarrow (i)\). Let \(g \colon X_1 \to X_2\) be a combinatorial similarity. We must prove that \(g\) is a weak similarity for \(\Phi_1\) and \(\Phi_2\). Since \(g\) is a combinatorial similarity, there is a bijection \(f \colon \Phi_2(X_2^2) \to \Phi_1(X_1^2)\) such that
\begin{equation}\label{t4.3:e2}
\Phi_1(x, y) = f(\Phi_2(g(x), g(y)))
\end{equation}
holds for all \(x\), \(y \in X_1\). In the correspondence with Definition~\ref{d3.13}, it suffices to show that \(f\) is an isomorphism of the posets \((\Phi_1(X_1^2), {\preccurlyeq}_1)\) and \((\Phi_2(X_2^2), {\preccurlyeq}_2)\). Using \eqref{t4.3:e1} we see that if
\begin{equation}\label{t4.3:e3}
\bigl(\<a, b> \in u_{\Phi_2}\bigr) \Leftrightarrow \bigl(\<f(a), f(b)> \in u_{\Phi_1}\bigr)
\end{equation}
is valid for all \(a\), \(b \in \Phi_2(X_2^2)\), then \(f\) is an isomorphism of these posets. Condition~\eqref{t4.3:e3} follows directly from \eqref{t4.3:e2} and the definitions of \(u_{\Phi_1}\) and \(u_{\Phi_2}\).
\end{proof}

\begin{corollary}\label{c4.3}
Let \(X\) and \(Y\) be nonempty sets and let \((Q, {\preccurlyeq}_{Q})\) and \((L, {\preccurlyeq}_{L})\) be posets. Suppose \(d_Q \colon X^2 \to Q\) and \(d_L \colon Y^2 \to L\) are a \(Q\)-pseudo\-ultrametric and a \(L\)-pseudoultrametric, respectively. If \(d_{Q}(X^{2}) = Q\), and \(d_{L}(Y^{2}) = L\), and \({\preccurlyeq}_{Q} = {\preccurlyeq}_{Q}^{0}\), and \({\preccurlyeq}_{L} = {\preccurlyeq}_{L}^{0}\), then the following conditions are equivalent for every mapping \(\Phi \colon X \to Y\).
\begin{enumerate}
\item [\((i)\)] \(\Phi\) is a weak similarity for \(d_Q\) and \(d_L\).
\item [\((ii)\)] \(\Phi\) is a combinatorial similarity for \(d_Q\) and \(d_L\).
\end{enumerate}
\end{corollary}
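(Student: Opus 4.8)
The plan is to reduce Corollary~\ref{c4.3} directly to Theorem~\ref{t4.3}, recognizing that the four standing hypotheses ($d_Q(X^2) = Q$, $d_L(Y^2) = L$, $\preccurlyeq_Q = \preccurlyeq_Q^0$, $\preccurlyeq_L = \preccurlyeq_L^0$) place us precisely in the setting of that theorem with $\Phi_1 = d_Q$, $\Phi_2 = d_L$, $X_1 = X$ and $X_2 = Y$. Essentially nothing new has to be proved; the work is to match the data of the corollary against the data of the theorem.

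First I would invoke Lemma~\ref{l3.18}. Since $d_Q$ is a $\preccurlyeq_Q$-pseudoultrametric with $d_Q(X^2) = Q$, that lemma gives $\preccurlyeq_Q^0 = u_{d_Q}^t \cup \Delta_Q$. Combining this with the hypothesis $\preccurlyeq_Q = \preccurlyeq_Q^0$ yields $\preccurlyeq_Q = u_{d_Q}^t \cup \Delta_{d_Q(X^2)}$, which is exactly the relation $\preccurlyeq_1$ of Theorem~\ref{t4.3} for $\Phi_1 = d_Q$. The same reasoning applied to $d_L$ gives $\preccurlyeq_L = u_{d_L}^t \cup \Delta_{d_L(Y^2)} = \preccurlyeq_2$. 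Because $(Q, \preccurlyeq_Q)$ and $(L, \preccurlyeq_L)$ are posets, the relations $\preccurlyeq_1$ and $\preccurlyeq_2$ are partial orders, and $d_Q$, $d_L$ are a $\preccurlyeq_1$-pseudoultrametric and a $\preccurlyeq_2$-pseudoultrametric, respectively. Thus both standing hypotheses of Theorem~\ref{t4.3} are met.

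Next I would check that the two notions of weak similarity in play actually coincide. In Definition~\ref{d3.13} the isomorphism $f$ is required between the subposets $(d_Q(X^2), \preccurlyeq_{d_Q(X^2)})$ and $(d_L(Y^2), \preccurlyeq_{d_L(Y^2)})$; since $d_Q(X^2) = Q$ and $d_L(Y^2) = L$, these subposets are the full posets $(Q, \preccurlyeq_Q)$ and $(L, \preccurlyeq_L)$, carrying exactly the orders $\preccurlyeq_1 = \preccurlyeq_Q$ and $\preccurlyeq_2 = \preccurlyeq_L$ used by Theorem~\ref{t4.3}. Hence ``\(\Phi\) is a weak similarity for $d_Q$ and $d_L$'' in the sense of the corollary is literally condition~$(i)$ of Theorem~\ref{t4.3} applied to $\Phi_1 = d_Q$, $\Phi_2 = d_L$. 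Combinatorial similarity (Definition~\ref{d2.17}) makes no reference to the orders at all, so condition~$(ii)$ matches verbatim. Setting $g := \Phi$ and reading off the equivalence from Theorem~\ref{t4.3} then completes the argument.

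The only step that requires genuine care — and where all four hypotheses are consumed — is the bookkeeping in the second paragraph: verifying that the orders $\preccurlyeq_Q$ and $\preccurlyeq_L$ named in the corollary are genuinely the induced orders $\preccurlyeq_1$ and $\preccurlyeq_2$ of Theorem~\ref{t4.3}. This identification rests squarely on Lemma~\ref{l3.18}, which needs both $d_Q(X^2) = Q$ (respectively $d_L(Y^2) = L$) and the coincidence with the weakest admissible order $\preccurlyeq_Q^0$ (respectively $\preccurlyeq_L^0$). Once this is in hand, the remainder is a direct translation of definitions.
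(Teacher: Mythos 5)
Your proposal is correct and follows exactly the route the paper intends: the corollary is stated without proof immediately after Theorem~\ref{t4.3}, and the intended derivation is precisely your reduction via Lemma~\ref{l3.18}, which under the hypotheses \(d_Q(X^2)=Q\), \(d_L(Y^2)=L\), \({\preccurlyeq}_Q={\preccurlyeq}_Q^0\) and \({\preccurlyeq}_L={\preccurlyeq}_L^0\) identifies \({\preccurlyeq}_Q\) and \({\preccurlyeq}_L\) with the induced orders \(u_{d_Q}^t\cup\Delta_{d_Q(X^2)}\) and \(u_{d_L}^t\cup\Delta_{d_L(Y^2)}\) of that theorem. Your additional check that the two notions of weak similarity coincide (because the ranges exhaust the posets, so the subposets of Definition~\ref{d3.13} are the full posets) is exactly the bookkeeping that makes the reduction legitimate.
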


In what follows we will use the next modification of Corollary~\ref{c4.3}.

\begin{lemma}\label{l4.7}
Let \((Q, {\preccurlyeq}_{Q})\) be a totally ordered set and let \(d \colon Q^{2} \to Q\) be a \(Q\)-pseudoultrametric such that \(d(Q^{2}) = Q\) and \({\preccurlyeq}_{Q}^{0} = {\preccurlyeq}_{Q}\). Then, for every poset \((L, {\preccurlyeq}_{L})\) having a smallest element and for each \(L\)-pseudoultrametric \(d_L \colon X^{2} \to L\) with \(d_L(X^2) = L\), the following statement holds. If \(d_L\) is combinatorially similar to \(d\), then the corresponding combinatorial similarity is a weak similarity for \(d\) and \(d_L\).
\end{lemma}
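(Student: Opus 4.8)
The plan is to show that the bijections already produced by the combinatorial similarity themselves witness a weak similarity, so that the entire problem collapses to checking one order-theoretic property. Writing $g \colon Q \to X$ for the given combinatorial similarity and invoking Definition~\ref{d2.17} (with $\Phi := d_L$, $\Psi := d$), I obtain a bijection $f \colon L \to Q$ such that
\[
d(x, y) = f(d_L(g(x), g(y)))
\]
for all $x$, $y \in Q$. Since $d_L(X^{2}) = L$ and $d(Q^{2}) = Q$, a comparison with Definition~\ref{d3.13} shows that $g$ is a weak similarity for $d$ and $d_L$ \emph{as soon as} $f$ is an order isomorphism between $(L, {\preccurlyeq}_L)$ and $(Q, {\preccurlyeq}_Q)$, because the displayed functional identity is precisely the one demanded there (and in Theorem~\ref{t4.3} the isomorphism is used in exactly this ``backward'' direction). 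Thus the whole proof reduces to establishing this order-isomorphism property of $f$.

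First I would establish, exactly as in the derivation of \eqref{t4.3:e3} inside the proof of Theorem~\ref{t4.3}, that $f$ transports $u_{d_L}$ onto $u_d$, i.e.
\[
(\langle a, b\rangle \in u_{d_L}) \Leftrightarrow (\langle f(a), f(b)\rangle \in u_d)
\]
for all $a$, $b \in L$; this is immediate from the displayed identity, the bijectivity of $g$, and the definitions of $u_{d_L}$ and $u_d$. As $f$ is a bijection it then carries each power $u_{d_L}^{\,n}$ onto $u_d^{\,n}$, hence the transitive closure $u_{d_L}^{\,t}$ onto $u_d^{\,t}$, and, respecting diagonals, it carries $u_{d_L}^{\,t} \cup \Delta_L$ onto $u_d^{\,t} \cup \Delta_Q$. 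Now Lemma~\ref{l3.18} applies on both sides (each range fills its target poset), giving ${\preccurlyeq}_L^{0} = u_{d_L}^{\,t} \cup \Delta_L$ and ${\preccurlyeq}_Q^{0} = u_d^{\,t} \cup \Delta_Q$; combined with the hypothesis ${\preccurlyeq}_Q^{0} = {\preccurlyeq}_Q$ this shows that $f$ is an order isomorphism of $(L, {\preccurlyeq}_L^{0})$ onto $(Q, {\preccurlyeq}_Q)$.

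At this stage $f$ is an isomorphism onto the \emph{totally ordered} set $(Q, {\preccurlyeq}_Q)$, so $(L, {\preccurlyeq}_L^{0})$ is itself totally ordered. This is the decisive point, and it is exactly what lets us dispense with the extra hypothesis ${\preccurlyeq}_L = {\preccurlyeq}_L^{0}$ of Corollary~\ref{c4.3}: by Proposition~\ref{p3.17} we always have the inclusion ${\preccurlyeq}_L^{0} \subseteq {\preccurlyeq}_L$, and a linear order admits no proper extension to a partial order on the same set, since any additional comparability would violate antisymmetry. Hence the inclusion forces ${\preccurlyeq}_L = {\preccurlyeq}_L^{0}$, and therefore $f$ is an order isomorphism of $(L, {\preccurlyeq}_L)$ onto $(Q, {\preccurlyeq}_Q)$. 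Together with the displayed identity this proves that $g$ is a weak similarity for $d$ and $d_L$; alternatively, once ${\preccurlyeq}_L = {\preccurlyeq}_L^{0}$ is in hand one may simply quote Corollary~\ref{c4.3}.

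I expect the main obstacle to be conceptual rather than computational: recognizing that the proof need only verify the order-isomorphism property of the range bijection $f$, and then isolating the one genuinely new ingredient, namely the maximality-of-linear-orders argument that upgrades ${\preccurlyeq}_L^{0}$ to ${\preccurlyeq}_L$ by exploiting the totality of $(Q, {\preccurlyeq}_Q)$. Everything else is a transcription of the relation-transport calculation already performed for Theorem~\ref{t4.3}, followed by routine appeals to Lemma~\ref{l3.18} and Proposition~\ref{p3.17}.
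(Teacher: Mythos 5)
Your proof is correct, and its engine is the same as the paper's: both arguments transport the relation \(u\) across the commutative diagram supplied by the combinatorial similarity, and both then invoke Lemma~\ref{l3.18} on each side together with the inclusion \({\preccurlyeq}_{L}^{0} \subseteq {\preccurlyeq}_{L}\) from Proposition~\ref{p3.17}. Where you diverge is the endgame. The paper never determines \({\preccurlyeq}_{L}^{0}\) exactly; it runs a one-directional chain of reductions: \(f\) is an isomorphism as soon as it is isotone (a bijective isotone map out of a totally ordered set is automatically an order isomorphism), \(f\) is isotone as soon as it maps \({\preccurlyeq}_{Q} = u_{d}^{t} \cup \Delta_{Q}\) into \({\preccurlyeq}_{L}^{0} \subseteq {\preccurlyeq}_{L}\), and this follows from the one-way transport of single \(u_{d}\)-steps into \(u_{d_L}\). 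You instead prove the two-way transport, conclude that \(f\) is an isomorphism of \((L, {\preccurlyeq}_{L}^{0})\) onto \((Q, {\preccurlyeq}_{Q})\), and then add the one ingredient the paper does not use: a linear order is a maximal partial order on its underlying set, so the inclusion \({\preccurlyeq}_{L}^{0} \subseteq {\preccurlyeq}_{L}\) collapses to equality. Your route costs slightly more (bidirectional transport of \(u\)-relations and their powers, plus the maximality observation), but it buys more: it establishes the stronger conclusions that \((L, {\preccurlyeq}_{L})\) is itself totally ordered and coincides with \({\preccurlyeq}_{L}^{0}\), and it lets the lemma be quoted as a special case of Corollary~\ref{c4.3} rather than reproved from scratch, which is a cleaner structural decomposition. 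The paper's route is leaner, needing only one-directional implications throughout and no claim about \({\preccurlyeq}_{L}\) itself.
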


\begin{proof}
Let \((L, {\preccurlyeq}_{L})\) be a poset with a smallest element and let \(d_L\) be a pseudoultrametric on a set \(X\) with \(d_L(X^2) = L\). Suppose \(d\) and \(d_L\) are combinatorially similar. Then there are bijections 
\[
g \colon X \to Q \quad \text{and} \quad f \colon Q \to L
\]
such that the diagram
\begin{equation}\label{l4.7:e1}
\ctdiagram{
\ctv 0,25:{Q^{2}}
\ctv 100,25:{X^{2}}
\ctv 0,-25:{Q}
\ctv 100,-25:{L}
\ctet 100,25,0,25:{g\otimes g}
\ctet 0,-25,100,-25:{f}
\ctel 0,25,0,-25:{d}
\cter 100,25,100,-25:{d_L}
}
\end{equation}
is commutative. If \(f\) is an isomorphism of \((Q, {\preccurlyeq}_Q)\) and \((L, {\preccurlyeq}_L)\), then \(g\) is a weak similarity. Since \((Q, {\preccurlyeq}_Q)\) is totally ordered and \(f\) is bijective, to prove that \(f\) is an isomorphism it suffices to show that the implication
\begin{equation}\label{l4.7:e2}
(q_1 \preccurlyeq_Q q_2) \Rightarrow (f(q_1) \preccurlyeq_L f(q_2))
\end{equation}
is valid for all \(q_1\), \(q_2 \in Q\). The inclusion \({\preccurlyeq}_L^0 \subseteq {\preccurlyeq}_L\) (see Proposition~\ref{p3.17}) implies that \eqref{l4.7:e2} is valid if
\begin{equation}\label{l4.7:e3}
(q_1 \preccurlyeq_Q q_2) \Rightarrow (f(q_1) \preccurlyeq_L^0 f(q_2)).
\end{equation}
By Lemma~\ref{l3.18}, the equalities \(d(Q^2) = Q\) and \(d(X^2) = L\) imply
\begin{equation}\label{l4.7:e4}
{\preccurlyeq}_Q^0 = u_d^t \cup \Delta_{Q} \quad \text{and} \quad {\preccurlyeq}_L^0 = u_{d_L}^t \cup \Delta_{L}.
\end{equation}
Using \eqref{l4.7:e4} we see that \eqref{l4.7:e3} is valid whenever
\[
(\<q_1, q_2> \in u_d) \Rightarrow (\<f(q_1), f(q_2)> \in u_{d_L}),
\]
which follows directly from the commutativity of \eqref{l4.7:e1} and the definition of \(u_d\) and \(u_{d_L}\).
\end{proof}

\begin{proposition}\label{c4.4}
Let \((Q, {\preccurlyeq}_{Q})\) be a totally ordered set with a smallest element \(q_0\). Then there is a \({\preccurlyeq}_{Q}\)-ultrametric \(d \colon Q^2 \to Q\) such that 
\[
d(Q^2) = Q \quad \text{and} \quad {\preccurlyeq}_{Q}^{0} = {\preccurlyeq}_{Q}.
\]
\end{proposition}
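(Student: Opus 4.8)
The plan is to realize $d$ on the set $X = Q$ itself by the \emph{order maximum}. Since $(Q, {\preccurlyeq}_Q)$ is totally ordered, every pair $x$, $y \in Q$ has a ${\preccurlyeq}_Q$-largest element, which I denote by $x \vee y$, and I would set
\[
d(x, y) = \begin{cases} q_0, & \text{if } x = y, \\ x \vee y, & \text{if } x \neq y. \end{cases}
\]
This is the natural ultrametric carried by a chain, and I expect it to meet all three demands of the statement.

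First I would check that $d$ is a ${\preccurlyeq}_Q$-ultrametric. Symmetry is clear. For the positivity condition $d(x, y) = q_0 \Leftrightarrow x = y$, note that $x \vee y = q_0$ forces both $x {\preccurlyeq}_Q q_0$ and $y {\preccurlyeq}_Q q_0$, hence $x = y = q_0$; so $d$ takes the value $q_0$ only on the diagonal. For the strong triangle inequality, observe that among the three pairwise values of any triple the ${\preccurlyeq}_Q$-largest is attained by at least two of the three pairs (immediate from $d = \vee$ on a chain); as any two of the three pairs share a common point, placing that point in the middle position supplies a permutation realizing \eqref{d3.11:e1}. Thus $d$ is a ${\preccurlyeq}_Q$-ultrametric.

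Next I would establish the two remaining properties. Surjectivity $d(Q^2) = Q$ is immediate, since $d(q_0, q_0) = q_0$ and $d(q, q_0) = q$ for every $q \neq q_0$. Because $d(Q^2) = Q$, Lemma~\ref{l3.18} reduces the identity ${\preccurlyeq}_Q^0 = {\preccurlyeq}_Q$ to proving $u_d^t \cup \Delta_Q = {\preccurlyeq}_Q$. The inclusion $\subseteq$ is free: the defining triple in \eqref{e2.16} always satisfies $d(x_1, x_3) {\preccurlyeq}_Q d(x_1, x_2)$ by the strong triangle inequality just verified, so $u_d \subseteq {\preccurlyeq}_Q$, and transitivity and reflexivity of ${\preccurlyeq}_Q$ upgrade this to $u_d^t \cup \Delta_Q \subseteq {\preccurlyeq}_Q$. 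For the reverse inclusion I would, given $q_1 \prec q_2$, produce the single triple $x_1 = q_1$, $x_2 = q_2$, $x_3 = q_0$: a direct computation gives $d(x_1, x_3) = q_1$ and $d(x_1, x_2) = d(x_2, x_3) = q_2$, so $\langle q_1, q_2 \rangle \in u_d$ by \eqref{e2.16}; together with $\Delta_Q$ this yields ${\preccurlyeq}_Q \subseteq u_d \cup \Delta_Q \subseteq u_d^t \cup \Delta_Q$. The two inclusions give ${\preccurlyeq}_Q^0 = {\preccurlyeq}_Q$, completing the proof.

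The crux --- more a point to be careful about than a genuine obstacle --- is the equality ${\preccurlyeq}_Q^0 = {\preccurlyeq}_Q$, and specifically its reverse inclusion, where both hypotheses on $Q$ are genuinely used: total orderedness makes $x \vee y$ well defined so that $d$ is a single-valued ultrametric, while the smallest element $q_0$ is exactly the auxiliary vertex that lets one witness an arbitrary comparison $q_1 \prec q_2$ by a triple routed through $q_0$. Once such witnessing triples are available, Lemma~\ref{l3.18} handles all the remaining bookkeeping, so no appeal to Szpilrajn's theorem or any transfinite construction is needed here.
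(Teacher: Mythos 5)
Your proposal is correct and is essentially the paper's own proof: your $d(x,y) = x \vee y$ for $x \neq y$, $d(x,x) = q_0$, is exactly the mapping defined in \eqref{c4.4:e1}, and the identity ${\preccurlyeq}_Q^{0} = {\preccurlyeq}_Q$ is obtained in the same way, via Lemma~\ref{l3.18} together with the witness triple $\<q_1, q_2, q_0>$ routed through the smallest element. Your handling of the diagonal pairs (covering them by $\Delta_Q$ rather than by $u_d$ itself) is in fact slightly more careful than the paper's closing claim $u_d \supseteq {\preccurlyeq}_Q$, which literally fails for pairs $\<q,q>$ with $q \neq q_0$ but is harmless for the same reason you give.
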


\begin{proof}
Let us define a mapping \(d \colon Q^{2} \to Q\) by the rule:
\begin{equation}\label{c4.4:e1}
d(p, q) := \begin{cases}
q_0 & \text{if } p=q,\\
p & \text{if } q \prec_{Q} p,\\
q & \text{if } p \prec_{Q} q.
\end{cases}
\end{equation}
It is clear that \(d\) is symmetric and the equality \(d(p, q) = q_0\) holds if and only if \(p=q\).

Now let \(\<q_1, q_2, q_3>\) be a triple of points of \(Q\). Suppose these points are pairwise different. Since \((Q, {\preccurlyeq}_{Q})\) is totally ordered, there is a permutation 
\[
\begin{pmatrix}
q_1 & q_2 & q_3\\
q_{i_1} & q_{i_2} & q_{i_3}
\end{pmatrix}
\]
such that
\begin{equation}\label{c4.4:e3}
q_{i_1} \prec_{Q} q_{i_3} \prec_{Q} q_{i_2}.
\end{equation}
From \eqref{c4.4:e1} and \eqref{c4.4:e3} it follows that
\[
d(q_{i_1}, q_{i_3}) = q_{i_3} \prec_{Q} q_{i_2} = d(q_{i_1}, q_{i_2}) = d(q_{i_2}, q_{i_3}).
\]
Thus,
\begin{equation}\label{c4.4:e9}
d(q_{i_1}, q_{i_3}) \preccurlyeq d(q_{i_1}, q_{i_2}) = d(q_{i_2}, q_{i_3})
\end{equation}
holds. Analogously, if the number of different points in \(\<q_1, q_2, q_3>\) is two, we can find a permutation such that \(q_{i_1} = q_{i_3} \neq 	q_{i_2}\). Hence, 
\[
d(q_{i_1}, q_{i_3}) = q_{0} \prec_{Q} d(q_{i_1}, q_{i_2}) = d(q_{i_2}, q_{i_3}),
\]
that implies \eqref{c4.4:e9}. For the case when \(q_1 = q_2 = q_3\) holds, \eqref{c4.4:e9} is trivially valid for every permutation
\[
\begin{pmatrix}
q_1 & q_2 & q_3\\
q_{i_1} & q_{i_2} & q_{i_3}
\end{pmatrix}.
\]
Hence, \(d\) is a \({\preccurlyeq}_{Q}\)-ultrametric on \(Q\).

It follows from \eqref{c4.4:e1} that \(d(q_0, q) = q\) holds for every \(q \in Q\). Thus, we have
\begin{equation}\label{c4.4:e4}
d(Q^{2}) = Q.
\end{equation}
To complete the proof it suffices to show that
\begin{equation}\label{c4.4:e5}
{\preccurlyeq}_{Q}^{0} = {\preccurlyeq}_{Q}.
\end{equation}

By definition of \({\preccurlyeq}_{Q}^{0}\), equality \eqref{c4.4:e5} holds if 
\begin{equation}\label{c4.4:e6}
{\preccurlyeq}_{Q}^{0} \supseteq {\preccurlyeq}_{Q}.
\end{equation}
Lemma~\ref{l3.18} and \eqref{c4.4:e4} imply the equality \({\preccurlyeq}_{Q}^{0} = (u_d^t \cup \Delta_{Q})\). Consequently, \eqref{c4.4:e6} is valid if and only if
\begin{equation}\label{c4.4:e7}
(u_d^t \cup \Delta_{Q}) \supseteq {\preccurlyeq}_{Q}.
\end{equation}

Let \(q_1\) and \(q_2\) be some points of \(Q\) and let \(q_1 \preccurlyeq_{Q} q_2\). If there is \(q_3 \in Q\) such that 
\begin{equation}\label{c4.4:e8}
q_1 = d(q_1, q_3) \quad \text{and} \quad q_2 = d(q_1, q_2) = d(q_2, q_3),
\end{equation}
then \(\<q_1, q_2> \in u_d\) holds. If we set \(q_3\) equals to \(q_0\), the smallest element of \((Q, {\preccurlyeq}_{Q})\), then \eqref{c4.4:e8} follows from \(q_1 \preccurlyeq_{Q} q_2\) and \eqref{c4.4:e1}. Thus, the inclusion \(u_d \supseteq {\preccurlyeq}_{Q}\) holds, that implies \eqref{c4.4:e7}.
\end{proof}

\begin{remark}\label{r4.8}
If \(Q\) is finite, \(Q = \{0, 1, \ldots, n\}\), and \({\preccurlyeq}_Q = {\leqslant}\) hold, then the mapping \(d\) defined by \eqref{c4.4:e1} is an ultrametric on \(Q\) for which the ultrametric space \((Q, d)\) is ``as rigid as possible''. Some extremal properties of such spaces and related graph-theoretical characterizations were found in \cite{DPT(Howrigid)}.
\end{remark}

\begin{example}\label{ex4.6}
Let us denote by \(\RR_{0}\) the Cartesian product of \(\RR^{+}\) and the two-points set \(\{0, 1\}\), \(\RR_{0} := \RR^{+} \times \{0, 1\}\), and let \({\preccurlyeq}_{\RR_{0}}\) be the \emph{lexicographical} order on \(\RR_{0}\),
\begin{equation}\label{ex4.6:e1}
\bigl(\<a,b> \preccurlyeq_{\RR_{0}} \<c,d>\bigr) \Leftrightarrow \bigl((a < c) \text{ or } (a = c \text{ and } b = 0 \text{ and } d = 1)\bigr),
\end{equation}
where \(\leqslant\) is the standard order on \(\RR^{+}\). The poset \((\RR_{0}, {\preccurlyeq}_{\RR_{0}})\) is totally ordered. By Proposition~\ref{c4.4}, the mapping \(d \colon \RR_{0}^{2} \to \RR_{0}\), defined by formula~\eqref{c4.4:e1}, is a \({\preccurlyeq}_Q\)-ultrametric and
\begin{equation}\label{ex4.6:e2}
d(\RR_{0}^{2}) = \RR_{0} \quad \text{and} \quad {\preccurlyeq}_{\RR_{0}}^{0} = {\preccurlyeq}_{\RR_{0}}
\end{equation}
hold.

Suppose that there is an ultrametric space \((X, \rho)\) such that \(d\) and \(\rho\) are combinatorially similar. From the definition of combinatorial similarity it follows that there are bijections \(f \colon \rho(X^{2}) \to d(\RR_{0}^{2})\) and \(g \colon \RR_{0} \to X\) such that \(d(x, y) = f(\rho(g(x), g(y)))\) holds for all \(x\), \(y \in \RR_{0}\). Let us consider now the poset \((\rho(X^{2}), {\preccurlyeq}_{\rho})\), where
\begin{equation}\label{ex4.6:e3}
{\preccurlyeq}_{\rho} := u_{\rho}^{t} \cup \Delta_{\rho(X^{2})}.
\end{equation}
By Theorem~\ref{t3.15}, \(\rho\) is a \({\preccurlyeq}_{\rho}\)-ultrametric on \(X\). Moreover, using Lemma~\ref{l3.18} and Theorem~\ref{t4.3} we obtain that \(g \colon \RR_{0} \to X\) is a weak similarity for \(d\) and \(\rho\). Hence, \(f \colon \rho(X^{2}) \to \RR_{0}\) is an isomorphism of \((\RR_{0}, {\preccurlyeq}_{\RR_{0}})\) and \((\rho(X^{2}), {\preccurlyeq}_{\rho})\). Proposition~\ref{p3.17}, Lemma~\ref{l3.18} and \eqref{ex4.6:e3} imply
\begin{equation}\label{ex4.6:e4}
(q_1 \prec_{\RR_{0}} q_2) \Leftrightarrow (f^{-1}(q_1) < f^{-1}(q_2))
\end{equation}
for all \(q_1\), \(q_2 \in \RR_{0}\).

Let us consider now the points
\[
q_i^x := \<x, i> \quad \text{and} \quad q_i^y := \<y, i>, \quad i = 0, 1, \quad x, y \in \RR^{+}.
\]
It follows directly from \eqref{ex4.6:e1} that if \(x < y\), then
\[
q_0^x \prec_{\RR_{0}} q_1^x \prec_{\RR_{0}} q_0^y \prec_{\RR_{0}} q_1^y.
\]
Consequently, 
\begin{equation}\label{ex4.6:e5}
f^{-1}(q_0^x) < f^{-1}(q_1^x) < f^{-1}(q_0^y) < f^{-1}(q_1^y).
\end{equation}
Since \(\QQ^{+} = \RR^{+} \cap \QQ\) is a dense subset of \(\RR^{+}\), for every \(x \in \RR^{+}\) there is \(p^x \in \QQ^{+}\) such that
\begin{equation}\label{ex4.6:e6}
f^{-1}(q_1^x) < p^x < f^{-1}(q_2^x).
\end{equation}
From \eqref{ex4.6:e5} and \eqref{ex4.6:e6} it follows that the mapping
\[
\RR^{+} \ni x \mapsto p^x \in \QQ^{+}
\]
is injective, contrary to the equalities \(|\RR^{+}| = 2^{\aleph_{0}}\) and \(|\QQ^{+}| = \aleph_{0}\). Thus, there are no ultrametrics which are combinatorially similar to \(d\).
\end{example}

\begin{remark}\label{r4.9}
An interesting topological property of the poset \((\RR_{0}, {\preccurlyeq}_{\RR_{0}})\) was found by F.~S.Cater \cite{Cat1999/2000RAE}. We will return to it later in Theorem~\ref{t4.19}.
\end{remark}

Example~\ref{ex4.6} shows that, after replacing \(\aleph_{0}\) by \(2^{\aleph_{0}}\) and \(\QQ^{+}\) by \(\RR^{+}\), Theorem~\ref{t3.7} becomes false. In particular, we have the following proposition.

\begin{proposition}\label{p4.8}
Let \(X\) be a set with \(|X| = 2^{\aleph_{0}}\). Then there is a metric \(d^{*} \colon X^{2} \to \RR^{+}\) such that:
\begin{enumerate}
\item [\((i)\)] If \(\rho\) is an arbitrary ultrametric, then \(\rho\) and \(d^{*}\) are not combinatorially similar;
\item [\((ii)\)] For every \(X_1 \subseteq X\) with \(|X_1| \leqslant \aleph_{0}\), the restriction \(d^{*}|_{X_1^2}\) of \(d^{*}\) is combinatorially similar to an ultrametric.
\end{enumerate}
\end{proposition}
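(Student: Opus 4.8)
The plan is to realise $d^{*}$ as an ordinary metric combinatorially similar to the $\preccurlyeq_{\RR_{0}}$-ultrametric $d \colon \RR_{0}^{2} \to \RR_{0}$ of Example~\ref{ex4.6}, and then to invoke Corollary~\ref{c3.17} in its two regimes: the bound $2^{\aleph_{0}}$ supplies $d^{*}$ globally, while the bound $\aleph_{0}$ restores ultrametricity on countable restrictions. Recall from Example~\ref{ex4.6} that $d(\RR_{0}^{2}) = \RR_{0}$, that $|\RR_{0}| = 2^{\aleph_{0}}$, and that $d$ is combinatorially similar to no ultrametric. Throughout I use that combinatorial similarity is an equivalence relation: it is reflexive; if $\Phi$ is combinatorially similar to $\Psi$ via bijections $(f, g)$ then $\Psi$ is so to $\Phi$ via $(f^{-1}, g^{-1})$; and a composite of two similarities is again a similarity, obtained by composing the two pairs of bijections.

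To construct $d^{*}$, I apply Corollary~\ref{c3.17}\((i)\) to the $Q$-ultrametric $d$: since $|d(\RR_{0}^{2})| = |\RR_{0}| = 2^{\aleph_{0}}$, there is an ordinary metric $\widetilde{d}$ combinatorially similar to $d$. Its domain is in bijection with $\RR_{0}$, hence has cardinality $2^{\aleph_{0}} = |X|$; pulling $\widetilde{d}$ back along a bijection of $X$ onto that domain yields a metric $d^{*} \colon X^{2} \to \RR^{+}$ that is an isometric copy of $\widetilde{d}$ and therefore again combinatorially similar to $d$. For~\((i)\), suppose some ultrametric $\rho$ were combinatorially similar to $d^{*}$; by symmetry and transitivity $d$ would be combinatorially similar to $\rho$, contradicting the final assertion of Example~\ref{ex4.6}. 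Hence~\((i)\) holds.

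For~\((ii)\), fix a combinatorial similarity $(f, g)$ with $g \colon X \to \RR_{0}$ a bijection and $d^{*}(x, y) = f(d(g(x), g(y)))$ for all $x$, $y \in X$, and let $X_{1} \subseteq X$ with $|X_{1}| \leqslant \aleph_{0}$. Put $Q_{1} := g(X_{1})$, so $|Q_{1}| = |X_{1}| \leqslant \aleph_{0}$. Restricting the defining identity to $X_{1}^{2}$ and noting that $d^{*}(X_{1}^{2}) = f(d(Q_{1}^{2}))$ shows that $d^{*}|_{X_{1}^{2}}$ is combinatorially similar to $d|_{Q_{1}^{2}}$ via the restricted bijections $g|_{X_{1}} \colon X_{1} \to Q_{1}$ and $f|_{d(Q_{1}^{2})}$. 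The restriction $d|_{Q_{1}^{2}}$ is still a $\preccurlyeq_{\RR_{0}}$-ultrametric, since the conditions of Definition~\ref{d3.11} are quantified over points and triples and so descend to subsets, and $|d(Q_{1}^{2})| \leqslant |Q_{1}|^{2} \leqslant \aleph_{0}$. Corollary~\ref{c3.17}\((ii)\) then makes $d|_{Q_{1}^{2}}$ combinatorially similar to an ordinary ultrametric, whence transitivity gives the same for $d^{*}|_{X_{1}^{2}}$, proving~\((ii)\).

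The one step deserving care is the claim that a combinatorial similarity restricts to one of the restricted mappings: it requires that $f$ map $d(Q_{1}^{2})$ bijectively onto $d^{*}(X_{1}^{2})$. This is exactly the identity $d^{*}(X_{1}^{2}) = f(d(Q_{1}^{2}))$, which follows from the defining equation as $(x, y)$ ranges over $X_{1}^{2}$ and $(g(x), g(y))$ correspondingly ranges over $Q_{1}^{2}$; the remainder is routine bookkeeping with the equivalence-relation structure of combinatorial similarity.
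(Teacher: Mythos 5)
Your proof is correct and follows essentially the same route as the paper: transport the \({\preccurlyeq}_{\RR_{0}}\)-ultrametric \(d\) of Example~\ref{ex4.6} to \(X\), invoke Corollary~\ref{c3.17}\((i)\) to get the metric \(d^{*}\), derive \((i)\) from Example~\ref{ex4.6} via transitivity of combinatorial similarity, and derive \((ii)\) from Corollary~\ref{c3.17}\((ii)\). The only difference is cosmetic (you apply Corollary~\ref{c3.17}\((i)\) before transporting to \(X\) rather than after), and you usefully spell out the restriction argument for \((ii)\) that the paper leaves implicit.
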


\begin{proof}
Let \(d \colon \RR_{0}^{2} \to \RR_{0}\) be the \({\preccurlyeq}_{\RR_{0}}\)-ultrametric defined in Example~\ref{ex4.6}. The equalities
\begin{equation}\label{p4.8:e1}
|X| = 2^{\aleph_{0}} \quad \text{and} \quad 2^{\aleph_{0}}  = |\RR_{0}|
\end{equation}
imply the existence of a bijection \(g \colon X \to \RR_{0}\). Let \(d_1 \colon X^{2} \to \RR_{0}\) be a \({\preccurlyeq}_{\RR_{0}}\)-ultrametric defined as 
\[
d_1(x, y) = d(g(x), g(y)), \quad x, y \in X.
\]
From~\eqref{p4.8:e1} it follows that \(|d_1(X^{2})| \leqslant 2^{\aleph_{0}}\). Consequently, by statement \((i)\) of Corollary~\ref{c3.17}, there is an usual metric \(d_2\) such that \(d_1\) and \(d_2\) are combinatorially similar. It follows directly from the definition of combinatorial similarity that there is a metric \(d^{*} \colon X^{2} \to \RR^{+}\) which is combinatorially similar to \(d_2\). Thus, \(d^{*}\) and \(d\) are combinatorially similar. 

It is easy to prove that \(d^*\) satisfies conditions \((i)\) and \((ii)\). Indeed, condition \((ii)\) follows from statement \((ii)\) of Corollary~\ref{c3.17}. Furthermore, it was shown in Example~\ref{ex4.6} that there are no ultrametrics which are combinatorially similar to \(d \colon \RR_{0}^{2} \to \RR_{0}\). Consequently, \((i)\) also holds.
\end{proof}

Let \((Q, {\preccurlyeq}_Q)\) be a totally ordered set, and let \(A\), \(B\) be nonempty subsets of \(Q\). We write \(A \prec_{Q} B\) when \(a \prec_{Q} b\) holds for all \(a \in A\) and \(b \in B\). 

The sets \(A\) and \(B\) are \emph{neighboring} if \(A \prec_{Q} B\) or, respectively, \(B \prec_{Q} A\) and there is no \(q \in Q\) such that
\[
A \prec_{Q} \{q\} \quad \text{and} \quad \{q\} \prec_{Q} B
\]
or, respectively,
\[
B \prec_{Q} \{q\} \quad \text{and} \quad \{q\} \prec_{Q} A.
\]

\begin{definition}\label{d4.9}
A totally ordered set \(Q\) is a \(\eta_1\)-set if it has no neighboring subsets which both have a cardinality strictly less than \(\aleph_1\).
\end{definition}

Let \((Q, {\preccurlyeq}_Q)\) and \((L, {\preccurlyeq}_L)\) be posets. An injection \(f \colon Q \to L\) is an \emph{embedding} of \((Q, {\preccurlyeq}_Q)\) in \((L, {\preccurlyeq}_L)\) if
\[
\bigl(q_1 \preccurlyeq_Q q_2\bigr) \Leftrightarrow \bigl(f(q_1) \preccurlyeq_L f(q_2)\bigr)
\]
is valid for all \(q_1\), \(q_2 \in Q\).

A totally ordered set \(L\) is \(\aleph_1\)-\emph{universal} if every totally ordered set \(Q\) with \(|Q| \leqslant \aleph_1\) can be embedded into \(L\).

\begin{lemma}\label{l4.10}
Every \(\eta_1\)-set is \(\aleph_1\)-universal.
\end{lemma}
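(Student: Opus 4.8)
The plan is to fix an $\eta_1$-set $L$ and an arbitrary totally ordered set $Q$ with $|Q| \leqslant \aleph_1$, and to construct an order embedding $f \colon Q \to L$ by a transfinite (one-sided ``forth'') recursion. First I would choose a well-ordering of $Q$ of order type $\lambda \leqslant \omega_1$, writing $Q = \{q_\alpha \colon \alpha < \lambda\}$; this well-ordering serves only for bookkeeping and has nothing to do with the given total order $\preccurlyeq_Q$. The reason for the bound $\lambda \leqslant \omega_1$ is that every proper initial segment $\{q_\beta \colon \beta < \alpha\}$ is then countable, i.e.\ of cardinality strictly less than $\aleph_1$, which is precisely the bound appearing in Definition~\ref{d4.9}.

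At stage $\alpha$ I would assume that $f$ has already been defined, order preservingly, on the countable set $\{q_\beta \colon \beta < \alpha\}$, and I would place $q_\alpha$ as follows. Put
\[
A_\alpha = \{f(q_\beta) \colon \beta < \alpha,\ q_\beta \prec_Q q_\alpha\}, \qquad B_\alpha = \{f(q_\beta) \colon \beta < \alpha,\ q_\alpha \prec_Q q_\beta\}.
\]
Since $f$ is order preserving on the earlier points and $\preccurlyeq_Q$ is total, we get $A_\alpha \prec_L B_\alpha$, and both sets are countable. The heart of the construction is to produce $\ell \in L$ with $A_\alpha \prec_L \{\ell\}$ and $\{\ell\} \prec_L B_\alpha$, and then to set $f(q_\alpha) := \ell$; such a choice preserves the order against every earlier point, and the strictness of the separations guarantees that $f$ stays injective. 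When $A_\alpha$ and $B_\alpha$ are both nonempty this is immediate: they are of cardinality $< \aleph_1$, so by Definition~\ref{d4.9} they are \emph{not} neighboring, which is exactly the assertion that some $\ell \in L$ lies strictly between them.

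The step I expect to be the main obstacle is the one-sided bookkeeping: when $B_\alpha = \varnothing$ one must place $q_\alpha$ strictly above the countable set $A_\alpha$, when $A_\alpha = \varnothing$ strictly below the countable set $B_\alpha$, and at $\alpha = 0$ one needs only $L \neq \varnothing$. These cannot be read off directly from the neighboring condition as literally stated, so I would first record, as a preliminary, the standard fact that a nontrivial $\eta_1$-set has neither a greatest nor a least element and, more strongly, has uncountable cofinality and coinitiality, so that \emph{every} countable subset of $L$ admits a strict upper bound and a strict lower bound in $L$; under Definition~\ref{d4.9} this is the content of the neighboring condition read with one of the two sets empty, and it is where the argument needs care. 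Granting this preliminary, the boundary cases are resolved exactly as the generic case, the recursion runs through all $\alpha < \lambda$, and the resulting $f \colon Q \to L$ is an order embedding. The routine verifications — that $f$ is total, strictly order preserving, and injective — would then be collected at the end, completing the proof that $Q$ embeds into $L$ and hence that $L$ is $\aleph_1$-universal.
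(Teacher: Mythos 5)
The paper itself contains no proof of this lemma --- it simply cites Theorem~20 of \cite{Ada2018} --- and your transfinite ``forth'' recursion is exactly the standard argument behind that citation. Your generic step is correct: when \(A_\alpha\) and \(B_\alpha\) are both nonempty, they are sets of cardinality strictly less than \(\aleph_1\) with \(A_\alpha \prec_L B_\alpha\), so by Definition~\ref{d4.9} they are not neighboring, which is precisely the existence of \(\ell\) with \(A_\alpha \prec_L \{\ell\}\) and \(\{\ell\} \prec_L B_\alpha\); injectivity and order preservation follow as you say, and the bound \(\lambda \leqslant \omega_1\) on the order type does keep every initial segment of cardinality \(< \aleph_1\).

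The genuine gap is in the boundary cases, exactly where you flagged that care is needed. Your ``preliminary'' --- that a nontrivial \(\eta_1\)-set has uncountable cofinality and coinitiality, so every countable subset of \(L\) has strict upper and lower bounds --- is \emph{false} under Definition~\ref{d4.9} as written, because that definition declares only \emph{nonempty} pairs of sets to be (or not to be) neighboring; it therefore imposes no constraint whatsoever on cofinality or coinitiality, and the condition cannot be ``read with one of the two sets empty'' as you propose. Concretely, let \(H\) be an \(\eta_1\)-set in the classical Hausdorff sense (empty \(A\), \(B\) allowed, so \(H\) has uncountable cofinality and coinitiality). Then \(H\) with a greatest element adjoined still satisfies Definition~\ref{d4.9} but has a maximum, and the ordered sum \(\sum_{n \in \omega} H_n\) of countably many copies of \(H\) satisfies Definition~\ref{d4.9} but has a countable cofinal subset (one point from each block). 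In the second example your recursion can actually strand: embedding \(\omega + 1\), nothing in your instructions prevents the images of the first \(\omega\) points from being chosen one in each block, hence cofinal in \(L\), after which the last point cannot be placed. The repair that stays inside Definition~\ref{d4.9} is not a cofinality claim but a sentinel trick: first note that \(|L| \geqslant 2\) must be assumed (under the literal definition the empty set and singletons are vacuously \(\eta_1\)-sets yet not \(\aleph_1\)-universal), then fix \(a \prec_L b\) and run the entire recursion inside the open interval between \(a\) and \(b\), applying the \(\eta_1\)-condition to the nonempty pair \(A_\alpha\), \(\{b\}\) when \(B_\alpha = \varnothing\), to \(\{a\}\), \(B_\alpha\) when \(A_\alpha = \varnothing\), and to \(\{a\}\), \(\{b\}\) at stage \(0\). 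Every image then lies strictly between \(a\) and \(b\), all three boundary cases reduce to the generic one, and the rest of your argument goes through verbatim.
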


For the detailed proof of the lemma see, for example, Theorem~20 in~\cite{Ada2018}.

\begin{remark}\label{r4.11}
The above definition of \(\aleph_1\)-universal sets can be naturally extended to arbitrary infinite cardinal number \(\aleph\). The construction of \(\aleph\)-universal posets was studied by many mathematicians (see, for example, \cite{Joh1956PA, Hed1969JoA} and the references therein).
\end{remark}

In the proof of the following theorem we will use the Continuum Hypothesis.

\begin{theorem}\label{t4.11}
Let \(X\) be a nonempty set, let \(\Phi\) be a mapping with \(\dom \Phi = X^{2}\) and \(|\Phi(X^{2})| \leqslant 2^{\aleph_{0}}\), and let \((Q, {\preccurlyeq}_Q)\) be a \(\eta_1\)-set with a smallest element \(q_0\). Then the following conditions are equivalent.
\begin{enumerate}
\item[\((i)\)] \(\Phi\) is combinatorially similar to a \({\preccurlyeq}_Q\)-pseudoultrametric.
\item[\((ii)\)] The mapping \(\Phi\) is symmetric, and the transitive closure \(u_{\Phi}^{t}\) of the binary relation \(u_{\Phi}\) is antisymmetric, and \(\Phi\) is \(a_0\)-coherent for a point \(a_0 \in \Phi(X^{2})\), and, for every triple \(\<x_1, x_2, x_3>\) of points of \(X\), there is a permutation
\[
\begin{pmatrix}
x_1 & x_2 & x_3\\
x_{i_1} & x_{i_2} & x_{i_3}
\end{pmatrix}
\]
such that \(\Phi(x_{i_1}, x_{i_2}) = \Phi(x_{i_2}, x_{i_3})\).
\end{enumerate}
\end{theorem}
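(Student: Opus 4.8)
The plan is to deduce the equivalence from Theorem~\ref{t3.15} together with the universality of $\eta_1$-sets, the essential extra ingredient being the Continuum Hypothesis, which lets us replace the bound $2^{\aleph_0}$ by $\aleph_1$. The implication $(i) \Rightarrow (ii)$ is immediate: a ${\preccurlyeq}_Q$-pseudoultrametric is in particular a $Q$-pseudoultrametric for the poset $(Q, {\preccurlyeq}_Q)$, so condition $(ii)$ of Theorem~\ref{t3.15} holds, whence condition $(iii)$ of that theorem holds; but this is verbatim condition $(ii)$ of the present statement.

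For the substantial direction $(ii) \Rightarrow (i)$, assume $(ii)$. Since $(ii)$ coincides with condition $(iii)$ of Theorem~\ref{t3.15}, that theorem (through its condition $(iv)$) yields that ${\preccurlyeq}_{\Phi} := u_{\Phi}^{t} \cup \Delta_{\Phi(X^{2})}$ is a partial order on $Y := \Phi(X^{2})$ with smallest element $a_0$, and that $\Phi$ is a ${\preccurlyeq}_{\Phi}$-pseudoultrametric on $X$. I would then apply Szpilrajn's Lemma~\ref{l3.3} to extend ${\preccurlyeq}_{\Phi}$ to a linear order ${\preccurlyeq}^{*}$ on $Y$; as $a_0$ is already ${\preccurlyeq}_{\Phi}$-smallest, it remains ${\preccurlyeq}^{*}$-smallest. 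Thus $(Y, {\preccurlyeq}^{*})$ is a totally ordered set with smallest element $a_0$ and $|Y| \leqslant 2^{\aleph_{0}}$.

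The heart of the proof is to embed $(Y, {\preccurlyeq}^{*})$ into $(Q, {\preccurlyeq}_Q)$ so that $a_0 \mapsto q_0$. Assuming the Continuum Hypothesis, $2^{\aleph_0} = \aleph_1$, so $|Y \setminus \{a_0\}| \leqslant \aleph_1$. I would first check that $Q \setminus \{q_0\}$ is again a $\eta_1$-set: two neighboring subsets of $Q \setminus \{q_0\}$ are neighboring in $Q$, since any element of $Q$ strictly between them lies strictly above $q_0$ and hence already belongs to $Q \setminus \{q_0\}$. By Lemma~\ref{l4.10} the set $Q \setminus \{q_0\}$ is therefore $\aleph_1$-universal, so $(Y \setminus \{a_0\}, {\preccurlyeq}^{*})$ admits an order-embedding $f_0$ into it. Extending $f_0$ by $f(a_0) := q_0$ produces an order-embedding $f \colon (Y, {\preccurlyeq}^{*}) \to (Q, {\preccurlyeq}_Q)$ with $f(a_0) = q_0$, the two pieces fitting monotonically because $q_0$ is ${\preccurlyeq}_Q$-smallest and $f_0$ takes values in $Q \setminus \{q_0\}$. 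Since ${\preccurlyeq}_{\Phi} \subseteq {\preccurlyeq}^{*}$, this $f$ is in particular an isotone injection of $(Y, {\preccurlyeq}_{\Phi})$ into $(Q, {\preccurlyeq}_Q)$ with $f(a_0) = q_0$.

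It then remains to invoke Proposition~\ref{p3.23}: because $\Phi$ is a ${\preccurlyeq}_{\Phi}$-pseudoultrametric and $f$ is isotone with $f(a_0) = q_0$, the composite $f \circ \Phi$ is a ${\preccurlyeq}_Q$-pseudoultrametric, while the fact recorded just before Proposition~\ref{p3.23} shows that $\Phi$ and $f \circ \Phi$ are combinatorially similar (via $\operatorname{id}_X$ and $f|_{Y}$). This establishes $(i)$. I expect the main obstacle to be the embedding step, and within it the two delicate points: forcing the smallest element onto $q_0$, handled by deleting $q_0$ and verifying $Q \setminus \{q_0\}$ stays an $\eta_1$-set, and the indispensable use of the Continuum Hypothesis to bring $|Y| \leqslant 2^{\aleph_0}$ within reach of $\aleph_1$-universality. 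The degenerate case $|Y| = 1$ (where $\Phi$ is constant) is trivial and can be treated separately.
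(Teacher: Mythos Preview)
Your proof is correct and follows essentially the same route as the paper: both directions go through Theorem~\ref{t3.15}, Szpilrajn's extension, the Continuum Hypothesis, and the $\aleph_1$-universality of $\eta_1$-sets (Lemma~\ref{l4.10}), concluding with the composite $f\circ\Phi$. You actually supply more detail than the paper at the one place it glosses over, namely arranging the embedding so that $a_0\mapsto q_0$; the paper simply asserts this is ``easy to prove,'' whereas you verify that $Q\setminus\{q_0\}$ remains an $\eta_1$-set and embed $Y\setminus\{a_0\}$ there, which is a clean way to handle it.
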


\begin{proof}
The validity of \((i) \Rightarrow (ii)\) follows from Theorem~\ref{t3.15}. 

Suppose that \((ii)\) holds. Using Theorem~\ref{t3.15} we obtain that \(\Phi\) is a \({\preccurlyeq}_{\Phi}\)-pseudoultrametric for the partial order
\[
{\preccurlyeq}_{\Phi} := u_{\Phi}^{t} \cup \Delta_{\Phi(X^{2})}
\]
defined on \(\Phi(X^{2})\).

By Lemma~\ref{l3.3} (Szpilrajn), there is an linear order \({\preccurlyeq}_{1}\) on \(\Phi(X^{2})\) such that \({\preccurlyeq}_{\Phi} \subseteq {\preccurlyeq}_{1}\). Consequently, \(\Phi\) is also a \({\preccurlyeq}_{1}\)-pseudoultrametric. The inequality \(|\Phi(X^{2})| \leqslant 2^{\aleph_{0}}\) holds. The Continuum Hypothesis, \(2^{\aleph_{0}} = \aleph_1\), and the last inequality imply the inequality \(|\Phi(X^{2})| \leqslant \aleph_1\). By Lemma~\ref{l4.10}, the \(\eta_1\)-set \((Q, {\preccurlyeq}_Q)\) is \(\aleph_1\)-universal. It is easy to prove that there is an embedding \(f \colon \Phi(X^{2}) \to Q\) of \((\Phi(X^{2}), {\preccurlyeq}_{1})\) in \((Q, {\preccurlyeq}_{Q})\) such that \(f(a_0) = q_0\). Then the mapping 
\[
X^2 \xrightarrow{\Phi} \Phi(X^{2}) \xrightarrow{f} Q
\]
is a \({\preccurlyeq}_{Q}\)-pseudoultrametric and this mapping is combinatorially similar to \(\Phi\).
\end{proof}

The following definition can be found in \cite[pp.~57--58]{Kel1975S}.

\begin{definition}\label{d4.13}
Let \((Q, {\preccurlyeq}_{Q})\) be a totally ordered set with \(|Q| > 1\). A topology \(\tau\) with a subbase consisting of all sets of the form
\[
\{q \in Q \colon q \prec_Q a\} \quad \text{or} \quad \{q \in Q \colon a \prec_Q q\}
\]
for some \(a \in Q\) is the order topology on \(Q\). In this case we say that \(\tau\) is the \({\preccurlyeq}_{Q}\)-topology for short.
\end{definition}

Recall that a topological space is second countable if it has a countable or finite base.

\begin{lemma}\label{l4.14}
Let \((Q, {\preccurlyeq}_{Q})\) be a totally ordered set with \(|Q| > 1\). Then the following conditions are equivalent.
\begin{enumerate}
\item [\((i)\)] The \({\preccurlyeq}_{Q}\)-topology is second countable.
\item [\((ii)\)] The poset \((Q, {\preccurlyeq}_{Q})\) is isomorphic to a subposet of \((\RR^{+}, \leqslant)\).
\end{enumerate}
\end{lemma}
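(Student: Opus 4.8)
The plan is to prove the two implications separately, exploiting the interplay between the order topology and order-embeddability, and in both directions controlling the \emph{jumps} of $Q$ (pairs $a \prec_Q b$ with $\{z : a \prec_Q z \prec_Q b\} = \varnothing$).

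For \((ii) \Rightarrow (i)\) I would first note that the order topology is an invariant of the order-isomorphism type, so I may assume $Q = S \subseteq \RR^{+}$ with $\preccurlyeq_Q$ the usual order. The order topology $\tau_{ord}$ on $S$ is coarser than the subspace topology $\tau_{sub}$ from $\RR$, and $(S, \tau_{sub})$ is second countable; the tempting shortcut "a topology coarser than a second countable one is second countable" is false in general (the split interval is a separable linearly ordered space that is not second countable), so the countable base must be built by hand. The crucial observation is that $S \subseteq \RR$ has \emph{at most countably many jumps}, since distinct jumps give pairwise disjoint nonempty open real intervals. I would then take a countable $\tau_{sub}$-dense set $D_0 \subseteq S$ and put $E = D_0 \cup J$, where $J$ is the countable set of endpoints of all jumps, and check the approximation property: for every $s \in S$ and every $a \prec_Q s$ there is $e \in E$ with $a \preccurlyeq_Q e \prec_Q s$ (if $\{t : a \prec_Q t \prec_Q s\}$ meets $S$ pick a $D_0$-point of that nonempty $\tau_{sub}$-open set; otherwise $a \prec_Q s$ is a jump and $e = a \in J$ works), and symmetrically from above. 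The countable family of order-intervals and order-rays with endpoints in $E$, together with $S$ itself, then forms a base for $\tau_{ord}$.

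For \((i) \Rightarrow (ii)\) I would fix a countable base $\mathcal{B}$ and extract two pieces of data. First, a countable $\tau$-dense set $D$ arises by choosing a point from each nonempty member of $\mathcal{B}$. Second, second countability forces at most countably many jumps: for a jump $a \prec_Q b$ the ray $\{z : z \prec_Q b\} = \{z : z \preccurlyeq_Q a\}$ is open with maximum $a$, so some $B \in \mathcal{B}$ satisfies $a = \max B$, and since $a$ determines its jump the assignment $(a,b) \mapsto B$ is injective. Setting $D' = D \cup \{\text{jump endpoints}\}$, a countable set, I would verify the strong density property: for all $x \prec_Q y$ there is $d \in D'$ with $x \prec_Q d \preccurlyeq_Q y$ (if $\{z : x \prec_Q z \prec_Q y\}$ is nonempty it is open and meets $D$; otherwise $x \prec_Q y$ is a jump and its right endpoint $y$ lies in $D'$). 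Enumerating $D' = \{d_k : k \in \NN\}$ and defining $f(x) = \sum_{d_k \preccurlyeq_Q x} 2^{-k}$ yields a map $Q \to [0,1] \subseteq \RR^{+}$ that is strictly increasing (strong density guarantees, for $x \prec_Q y$, some $d_k$ with $x \prec_Q d_k \preccurlyeq_Q y$, which contributes to $f(y)$ but not to $f(x)$), hence an order isomorphism of $Q$ onto the subposet $f(Q)$ of $(\RR^{+}, \leqslant)$.

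The hard part will be the direction \((ii) \Rightarrow (i)\): the order topology on a subset of $\RR$ genuinely differs from the subspace topology (a point isolated in $\tau_{sub}$ need not be isolated in $\tau_{ord}$), so the coarser-topology argument is unavailable and the whole weight falls on recognizing that only countably many jumps occur and that a countable dense set augmented by jump endpoints already generates $\tau_{ord}$. In \((i) \Rightarrow (ii)\) the only delicate point is bounding the jumps, which the "maximum of a basic set" trick handles; as an alternative to the dyadic sum one could invoke Cantor's Lemma~\ref{l3.2} to embed the countable set $D'$ into $\QQ^{+}$ and then extend monotonically, but the explicit sum $\sum 2^{-k}$ avoids a separate extension step.
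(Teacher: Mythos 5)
Your proof is correct, but it takes a genuinely different route from the paper, because the paper offers no argument at all for Lemma~\ref{l4.14}: it simply declares the lemma to be ``a simple modification of Theorem~II'' of Cater~\cite{Cat1999/2000RAE} and moves on. You replace that citation with a self-contained, elementary proof in the classical Cantor style, and the mechanics check out. In \((i) \Rightarrow (ii)\), your count of the jumps is sound (the left endpoint \(a\) of a jump is the maximum of some basic set, and \(a\) determines the jump because the right endpoint is an immediate successor, so the jump-to-basic-set assignment is injective), the strong density of \(D' = D \cup \{\text{jump endpoints}\}\) is exactly what is needed, and the dyadic sum \(f(x) = \sum_{d_k \preccurlyeq_Q x} 2^{-k}\) is strictly increasing precisely because of it, giving an order isomorphism onto \(f(Q) \subseteq \RR^{+}\). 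In \((ii) \Rightarrow (i)\), the disjoint-intervals argument bounds the jumps of \(S \subseteq \RR^{+}\), and your approximation property (from below and from above) does yield that the rays and intervals with endpoints in \(E = D_0 \cup J\), together with \(S\), form a countable base; the edge cases (points that are maxima or minima of \(S\)) are covered since \(S\) itself is in the family. What each approach buys: the paper's citation is economical and defers to a published result, while your proof makes the lemma independent of the external reference and isolates the two combinatorial facts that actually drive the equivalence --- countability of jumps and existence of a countable weakly order-dense set; your remark that the explicit sum avoids the monotone-extension step that a detour through Lemma~\ref{l3.2} would require (that lemma applies only to countable orders) is also apt.

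One minor imprecision in an aside: the split interval does witness that a separable linearly ordered space need not be second countable, but it does not, as stated, exhibit a topology coarser than a second countable one that fails second countability (its order topology is not contained in the natural second countable product topology on \([0,1] \times \{0,1\}\)). The cautionary claim itself is true, and in any case your argument never invokes that shortcut --- you construct the countable base by hand --- so this does not affect correctness.
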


This lemma is a simple modification of Theorem~II from paper~\cite{Cat1999/2000RAE} of F.~S.~Cater.

\begin{theorem}\label{t4.15}
Let \((Q, {\preccurlyeq}_{Q})\) be a totally ordered set satisfying \(|Q| > 1\) and having the smallest element \(q_0\). Then the following conditions are equivalent.
\begin{enumerate}
\item [\((i)\)] The \({\preccurlyeq}_{Q}\)-topology is second countable.
\item [\((ii)\)] For every \({\preccurlyeq}_{Q}\)-pseudoultrametric \(d\) there is a pseudoultrametric \(\rho\) such that \(d\) and \(\rho\) are weakly similar.
\item [\((iii)\)] For every \({\preccurlyeq}_{Q}\)-pseudoultrametric \(d\) there is a pseudoultrametric \(\rho\) such that \(d\) and \(\rho\) are combinatorially similar.
\end{enumerate}
\end{theorem}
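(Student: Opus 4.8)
The plan is to prove the cyclic chain \((i) \Rightarrow (ii) \Rightarrow (iii) \Rightarrow (i)\), using Lemma~\ref{l4.14} throughout to convert ``second countable'' into ``isomorphic to a subposet of \((\RR^{+}, \leqslant)\)''.

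For \((i) \Rightarrow (ii)\), I would first invoke Lemma~\ref{l4.14} to obtain an order isomorphism \(\iota\) of \((Q, {\preccurlyeq}_{Q})\) onto a subposet of \((\RR^{+}, \leqslant)\); replacing \(\iota\) by \(q \mapsto \iota(q) - \iota(q_0)\) we may assume \(\iota(q_0) = 0\). Given an arbitrary \({\preccurlyeq}_{Q}\)-pseudoultrametric \(d \colon X^{2} \to Q\), set \(\rho := \iota \circ d\). Since \(\iota\) is isotone with \(\iota(q_0) = 0\), Proposition~\ref{p3.23} shows that \(\rho\) is a \(\leqslant\)-pseudoultrametric for \((\RR^{+}, \leqslant)\), that is, an ordinary pseudoultrametric (Remark~\ref{r3.13}). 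The restriction of \(\iota\) to \(d(X^{2})\) is an order isomorphism of the subposet \(d(X^{2})\) of \((Q, {\preccurlyeq}_{Q})\) onto the subposet \(\rho(X^{2})\) of \((\RR^{+}, \leqslant)\), and it intertwines the two distances via \(\rho = \iota \circ d\); hence \(\operatorname{id}_X\) is a weak similarity for \(d\) and \(\rho\) in the sense of Definition~\ref{d3.13}.

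The implication \((ii) \Rightarrow (iii)\) is immediate, since by Proposition~\ref{p3.16} every weak similarity is a combinatorial similarity.

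The core of the proof is \((iii) \Rightarrow (i)\), where I would test condition \((iii)\) on a single, maximally rigid distance. By Proposition~\ref{c4.4} there is a \({\preccurlyeq}_{Q}\)-ultrametric \(d \colon Q^{2} \to Q\) with \(d(Q^{2}) = Q\) and \({\preccurlyeq}_{Q}^{0} = {\preccurlyeq}_{Q}\). Condition \((iii)\) provides an ordinary pseudoultrametric \(\rho \colon X^{2} \to \RR^{+}\) combinatorially similar to \(d\). Let \(L := \rho(X^{2})\) carry the order induced from \((\RR^{+}, \leqslant)\); then \(\rho\) is an \(L\)-pseudoultrametric with \(\rho(X^{2}) = L\) and \(L\) has smallest element \(0\). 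The equalities \(d(Q^{2}) = Q\) and \({\preccurlyeq}_{Q}^{0} = {\preccurlyeq}_{Q}\) are exactly the hypotheses of Lemma~\ref{l4.7}, so that lemma forces the combinatorial similarity between \(d\) and \(\rho\) to be a \emph{weak} similarity. By Definition~\ref{d3.13} this yields an order isomorphism of \((Q, {\preccurlyeq}_{Q}) = (d(Q^{2}), {\preccurlyeq}_{Q})\) onto the subposet \((L, \leqslant)\) of \((\RR^{+}, \leqslant)\); thus \((Q, {\preccurlyeq}_{Q})\) is isomorphic to a subposet of \((\RR^{+}, \leqslant)\), and Lemma~\ref{l4.14} returns condition \((i)\). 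The one genuinely load-bearing step is the choice of the rigid \(d\) supplied by Proposition~\ref{c4.4}: it is precisely what makes Lemma~\ref{l4.7} applicable and thereby upgrades an a priori arbitrary combinatorial similarity into an order-preserving one, from which the embedding into \(\RR^{+}\) can be read off.
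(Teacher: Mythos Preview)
Your proof is correct and follows essentially the same route as the paper: the cyclic chain \((i)\Rightarrow(ii)\Rightarrow(iii)\Rightarrow(i)\) via Lemma~\ref{l4.14}, Proposition~\ref{p3.16}, Proposition~\ref{c4.4}, and Lemma~\ref{l4.7}. Your handling of \((i)\Rightarrow(ii)\) is in fact slightly cleaner: by applying Lemma~\ref{l4.14} directly to \(Q\) and shifting so that \(\iota(q_0)=0\), you avoid the paper's case split between \(|Q|=2\) and \(|Q|\geqslant 3\) (which the paper needs because it first removes \(q_0\) and embeds \(Q\setminus\{q_0\}\)).
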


\begin{proof}
It is easy to see that \((i)\), \((ii)\) and \((iii)\) are equivalent if \(|Q| = 2\). Suppose \(|Q| \geqslant 3\) holds.

\((i) \Rightarrow (ii)\). Let the \({\preccurlyeq}_{Q}\)-topology be second countable, let \(X\) be a nonempty set and let \(d \colon X^{2} \to Q\) be a \({\preccurlyeq}_{Q}\)-pseudoultrametric. Write \(Q_0 := Q \setminus \{q_0\}\) and \({\preccurlyeq}_{Q_0} := Q_0^2 \cap {\preccurlyeq}_{Q}\). The inequality \(|Q| \geqslant 3\) implies \(|Q_0| > 1\). The \({\preccurlyeq}_{Q_0}\)-topology coincides with the topology induced on \(Q_0\) by \({\preccurlyeq}_{Q}\)-topology. Consequently, the \({\preccurlyeq}_{Q_0}\)-topology is also second countable. Hence, by Lemma~\ref{l4.14}, there is an isomorphism \(f \colon Q_0 \to A_0\) of the posets \((Q_0, {\preccurlyeq}_{Q_0})\) and \((A_0, \leqslant)\), where \(A_0 \subseteq (0, \infty)\) and \(\leqslant\) is the standard order on \(\RR\). Write \(A := A_0 \cup \{0\}\). The function \(f^{*} \colon Q \to A\),
\[
f^{*}(q) = \begin{cases}
0 & \text{if } q = q_0,\\
f(q) & \text{if } q\neq q_0,
\end{cases}
\]
is an isomorphism of \((Q, {\preccurlyeq}_{Q})\) and \((A, \leqslant)\). Let \(\rho \colon X^{2} \to \RR^{+}\) be defined as
\[
\rho(x, y) = f^{*}(d(x, y)), \quad x, y \in X.
\]
Then \(\rho\) is a pseudoultrametric on \(X\) and the identical mapping \hbox{\(X \xrightarrow{\operatorname{id}} X\)} is a weak similarity for \(d\) and \(\rho\).

\((ii) \Rightarrow (iii)\). The validity of this implication follows from Proposition~\ref{p3.16}.

\((iii) \Rightarrow (i)\). Suppose condition \((iii)\) holds. By Proposition~\ref{c4.4}, there is a \({\preccurlyeq}_{Q}\)-ultrametric \(d \colon Q^2 \to Q\) satisfying the equalities \(d(Q^2) = Q\) and \({\preccurlyeq}_{Q}^{0} = {\preccurlyeq}_{Q}\).

Let \(\rho \colon X^{2} \to \RR^{+}\) be a pseudoultrametric such that \(\rho\) and \(d\) are combinatorially similar. Write \(L := \rho(X^2)\) and \({\preccurlyeq}_{L} := {\leqslant} \cap L^2\). Then the \(L\)-pseudoultrametric \(\rho_L \colon X^{2} \to L\),
\[
\rho_L(x, y) = \rho(x, y), \quad x, y \in X,
\]
is also combinatorially similar to \(d\). By Lemma~\ref{l4.7}, \(d\) and \(\rho_L\) are weakly similar. Using Definition~\ref{d3.13} we obtain that \((Q, {\preccurlyeq}_{Q})\) is isomorphic to the subposet \((L, {\preccurlyeq}_{L})\) of \((\RR^{+}, {\leqslant})\). Hence, by Lemma~\ref{l4.14} (Cater), the \({\preccurlyeq}_{Q}\)-topology is second countable.
\end{proof}

Recall that a topological space \((X, \tau)\) is said to be separable if there is a set \(A \subseteq X\) such that \(|A| \leqslant \aleph_{0}\) and \(A \cap U \neq \varnothing\) for every nonempty set \(U \in \tau\).

In what follows we denote by \((\RR_{0}, {\preccurlyeq}_{\RR_{0}})\) the totally ordered set constructed in Example~\ref{ex4.6}.

The next lemma is a part of Theorem~III \cite{Cat1999/2000RAE}.

\begin{lemma}[Cater]\label{l4.18}
Let \((Q, {\preccurlyeq}_{Q})\) be a totally ordered set with \(|Q| > 1\). Then the following conditions are equivalent.
\begin{enumerate}
\item [\((i)\)] The \({\preccurlyeq}_{Q}\)-topology is separable.
\item [\((ii)\)] The poset \((Q, {\preccurlyeq}_{Q})\) is isomorphic to a subposet of \((\RR_{0}, {\preccurlyeq}_{\RR_{0}})\).
\end{enumerate}
\end{lemma}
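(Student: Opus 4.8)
The plan is to prove the two implications separately, relying on the classical fact that $(\RR_0,{\preccurlyeq}_{\RR_0})$ is, in its order topology, an (unbounded) version of the Alexandroff double arrow space, together with Cantor's Lemma~\ref{l3.2}. The direction $(ii)\Rightarrow(i)$ will be a soft argument about hereditary separability, while $(i)\Rightarrow(ii)$ requires an explicit embedding built from a countable dense set.

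For $(ii)\Rightarrow(i)$ I would first record that the order topology of $\RR_0$ is hereditarily separable. Writing $\RR_0=\bigcup_{n\in\NN}\bigl([0,n]\times\{0,1\}\bigr)$, each summand is order-isomorphic to the double arrow over $[0,n]$, which is hereditarily separable; since a countable union of separable subspaces is separable, every subspace of $\RR_0$ is separable. Now suppose $(Q,{\preccurlyeq}_Q)$ is order-isomorphic to a subposet $Q'\subseteq\RR_0$. The order topology of $Q'$ (as an abstract totally ordered set, Definition~\ref{d4.13}) is \emph{coarser} than the topology $Q'$ inherits as a subspace of $\RR_0$, because each order-subbasic set $\{x\in Q'\colon x\prec_{\RR_0} a\}$ or $\{x\in Q'\colon a\prec_{\RR_0} x\}$ with $a\in Q'$ is the trace on $Q'$ of an open ray of $\RR_0$. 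A countable set dense in the finer subspace topology therefore stays dense in the coarser order topology, so $(Q',{\preccurlyeq}_Q)$, and hence $(Q,{\preccurlyeq}_Q)$, has separable order topology.

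For $(i)\Rightarrow(ii)$ I would fix a countable dense set $D\subseteq Q$ for the ${\preccurlyeq}_Q$-topology and use its density in combinatorial form: every nonempty open interval $\{x\colon a\prec_Q x\prec_Q b\}$ meets $D$. The crucial consequence is a multiplicity bound: any convex subset of $Q$ disjoint from $D$ contains at most two points, since three points $q_1\prec_Q q_2\prec_Q q_3$ would make $\{x\colon q_1\prec_Q x\prec_Q q_3\}$ a nonempty open set missing $D$. By Cantor's Lemma~\ref{l3.2} there is an order embedding $\varphi\colon D\to\QQ^{+}$. I would then define $e\colon Q\to\RR_0$ by sending each $q$ to the real coordinate determined by its Dedekind cut $\bigl(\{d\in D\colon d\prec_Q q\},\,\{d\in D\colon q\prec_Q d\}\bigr)$ in $D$, and use the second coordinate in $\{0,1\}$ to separate the at most two elements of $Q$ sharing a real coordinate, assigning the smaller the bit $0$ and the larger the bit $1$. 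Distinct cuts are separated by some point of $D$ and so receive strictly ordered real coordinates, while two points sharing a cut form an adjacent pair and land on $\<r,0> \prec_{\RR_0} \<r,1>$; thus $e$ is the desired order embedding.

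The main obstacle is precisely the bookkeeping in this last construction. One must define the real coordinate so that it is genuinely order-preserving at the boundary cases, namely a point of $D$ sitting immediately below a gap, a jump exactly one of whose endpoints lies in $D$, and the extreme cuts with empty lower or upper part; and one must check in each case that at most two elements of $Q$ collide, so that the two sheets $\{0,1\}$ of $\RR_0$ always suffice. Once this case analysis is in place, injectivity of $e$ and the equivalence $q_1\prec_Q q_2\Leftrightarrow e(q_1)\prec_{\RR_0} e(q_2)$ follow routinely from the density of $D$ and the monotonicity of $\varphi$.
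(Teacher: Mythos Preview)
The paper does not give its own proof of this lemma: it simply attributes the result to Cater and cites Theorem~III of \cite{Cat1999/2000RAE}. Your proposal therefore goes beyond what the paper does, supplying an actual argument where the paper is content with a reference.

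Your outline is sound. For \((ii)\Rightarrow(i)\) the key facts---hereditary separability of the split interval and the observation that the intrinsic order topology on a subset is coarser than the subspace topology---are correct and combine exactly as you say. For \((i)\Rightarrow(ii)\) the strategy of embedding a countable dense $D$ into $\QQ^{+}$ via Lemma~\ref{l3.2}, extending by Dedekind cuts, and using the second coordinate $\{0,1\}$ to resolve collisions is the natural one; your observation that at most two points of $Q$ can share a cut (otherwise the middle one would witness a nonempty open interval missing $D$) is exactly the combinatorial fact that makes the two sheets of $\RR_0$ suffice. You are right that the residual work is bookkeeping: one must fix a definite choice of real coordinate (e.g.\ $r(q)=\sup\varphi(L_q)$ with a convention for the extreme cuts), and then verify strict monotonicity in the boundary cases you list, particularly when one endpoint of a jump lies in $D$. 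None of these cases hides a genuine obstruction, so the sketch can be completed to a full proof.
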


\begin{theorem}\label{t4.19}
Let \((Q, {\preccurlyeq}_{Q})\) be a totally ordered set having a smallest element and satisfying the inequality \(|Q| > 1\). Then the following conditions are equivalent.
\begin{enumerate}
\item [\((i)\)] The \({\preccurlyeq}_{Q}\)-topology is separable.
\item [\((ii)\)] For every \({\preccurlyeq}_{Q}\)-pseudoultrametric \(d\) there is a \({\preccurlyeq}_{\RR_{0}}\)-pseudo\-ultra\-metric \(\rho\) such that \(d\) and \(\rho\) are weakly similar.
\item [\((iii)\)] For every \({\preccurlyeq}_{Q}\)-pseudoultrametric \(d\) there is a \({\preccurlyeq}_{\RR_{0}}\)-pseudo\-ultra\-metric \(\rho\) such that \(d\) and \(\rho\) are combinatorially similar.
\end{enumerate}
\end{theorem}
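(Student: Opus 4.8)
The plan is to follow the proof of Theorem~\ref{t4.15} almost verbatim, replacing the second-countability criterion of Lemma~\ref{l4.14} by the separability criterion of Lemma~\ref{l4.18} (Cater) and the totally ordered set \((\RR^{+}, \leqslant)\) by \((\RR_{0}, {\preccurlyeq}_{\RR_{0}})\). As in Theorem~\ref{t4.15}, the case \(|Q| = 2\) is immediate (the \({\preccurlyeq}_Q\)-topology is then finite, hence separable, and \((Q, {\preccurlyeq}_Q)\) embeds into \((\RR_0, {\preccurlyeq}_{\RR_0})\)), so I would assume \(|Q| \geqslant 3\). The implication \((ii) \Rightarrow (iii)\) is nothing but Proposition~\ref{p3.16}, since every weak similarity is a combinatorial similarity. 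Thus the real content lies in \((i) \Rightarrow (ii)\) and \((iii) \Rightarrow (i)\).

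For \((i) \Rightarrow (ii)\), I would assume the \({\preccurlyeq}_Q\)-topology separable and take an arbitrary \({\preccurlyeq}_Q\)-pseudoultrametric \(d \colon X^{2} \to Q\). By Lemma~\ref{l4.18} there is an isomorphism \(f\) of \((Q, {\preccurlyeq}_Q)\) onto a subposet \(B\) of \((\RR_0, {\preccurlyeq}_{\RR_0})\). The one point where the argument of Theorem~\ref{t4.15} does not transfer literally is that Definition~\ref{d3.11} forces \(\rho(x,x)\) to equal the global minimum of the target poset, so \(q_0\) must be sent to the minimum \(\<0,0>\) of \(\RR_0\). I would fix this by a one-point modification: set \(\tilde f(q_0) := \<0,0>\) and \(\tilde f(q) := f(q)\) for \(q \neq q_0\). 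Since \(\<0,0>\) is the global minimum of \(\RR_0\) and \(q_0\) is the minimum of \(Q\), one checks directly that \(\tilde f\) remains an order embedding of \((Q, {\preccurlyeq}_Q)\) into \((\RR_0, {\preccurlyeq}_{\RR_0})\) with \(\tilde f(q_0) = \<0,0>\) (injectivity uses that no \(q \neq q_0\) can satisfy \(f(q) = \<0,0>\), as \(f(q_0)\) is the least element of \(B\)). Then \(\tilde f\) is isotone and carries least element to least element, so by Proposition~\ref{p3.23} the mapping \(\rho := \tilde f \circ d\) is a \({\preccurlyeq}_{\RR_0}\)-pseudoultrametric; and since \(\tilde f\) is an isomorphism onto its image, its restriction to \(d(X^2)\) witnesses that the identity mapping \(X \xrightarrow{\operatorname{id}} X\) is a weak similarity for \(d\) and \(\rho\), exactly as in Theorem~\ref{t4.15}.

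For \((iii) \Rightarrow (i)\), I would repeat the corresponding part of Theorem~\ref{t4.15}. By Proposition~\ref{c4.4} there is a \({\preccurlyeq}_Q\)-ultrametric \(d \colon Q^2 \to Q\) with \(d(Q^2) = Q\) and \({\preccurlyeq}_Q^0 = {\preccurlyeq}_Q\). Applying \((iii)\) to this \(d\) yields a \({\preccurlyeq}_{\RR_0}\)-pseudoultrametric \(\rho \colon X^{2} \to \RR_0\) combinatorially similar to \(d\). Writing \(L := \rho(X^2)\) and \({\preccurlyeq}_L := {\preccurlyeq}_{\RR_0} \cap L^2\), the corestriction \(\rho_L \colon X^2 \to L\) is an \(L\)-pseudoultrametric with \(\rho_L(X^2) = L\), the poset \((L, {\preccurlyeq}_L)\) has smallest element \(\<0,0>\), and \(\rho_L\) is still combinatorially similar to \(d\). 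Lemma~\ref{l4.7} then upgrades this combinatorial similarity to a weak similarity for \(d\) and \(\rho_L\); by Definition~\ref{d3.13} this gives an order isomorphism between \((Q, {\preccurlyeq}_Q) = (d(Q^2), {\preccurlyeq}_Q)\) and the subposet \((L, {\preccurlyeq}_L)\) of \((\RR_0, {\preccurlyeq}_{\RR_0})\). Hence \((Q, {\preccurlyeq}_Q)\) is isomorphic to a subposet of \((\RR_0, {\preccurlyeq}_{\RR_0})\), and Lemma~\ref{l4.18} (Cater) returns the separability of the \({\preccurlyeq}_Q\)-topology.

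The only genuinely new obstacle, compared with Theorem~\ref{t4.15}, is the one highlighted in \((i) \Rightarrow (ii)\): in the \((\RR^{+}, \leqslant)\) case one removes \(q_0\), embeds the remainder into \((0, \infty)\), and re-adjoins \(0\), which works because \(0\) lies below all of \((0,\infty)\). For \(\RR_0\) the naive analogue is unavailable, since \((\RR_0, {\preccurlyeq}_{\RR_0})\) is not isomorphic to \(\RR_0\) with its minimum deleted (the latter has a least element possessing no immediate successor). The single-point correction \(f \mapsto \tilde f\) above circumvents this cleanly; alternatively, mimicking Theorem~\ref{t4.15} more closely, one could embed \(Q \setminus \{q_0\}\) into \(\RR_0\), compose with the order embedding \(\<a,b> \mapsto \<a+1,b>\) to land strictly above \(\<0,0>\), and then adjoin \(\<0,0>\) as the image of \(q_0\). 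Everything else is a direct transcription of the proof of Theorem~\ref{t4.15}.
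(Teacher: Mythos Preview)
Your proposal is correct and follows precisely the approach the paper indicates: the paper's own proof is the single sentence ``Using Lemma~\ref{l4.18} instead of Lemma~\ref{l4.14} we can prove this theorem similarly to Theorem~\ref{t4.15},'' and your write-up is exactly that transcription. The one point you flag---arranging that the smallest element \(q_0\) is sent to \(\<0,0>\)---is a genuine detail the paper glosses over, and both of your fixes (the one-point modification \(\tilde f\) and the shift \(\<a,b>\mapsto\<a+1,b>\)) are valid and in the spirit of the corresponding step in Theorem~\ref{t4.15}.
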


Using Lemma \ref{l4.18} instead of Lemma \ref{l4.14} we can prove this theorem similarly to Theorem~\ref{t4.15}.

The following theorem gives us some necessary and sufficient conditions under which a mapping is combinatorially similar to a pseudoultrametric, and it can be considered as a main result of the section.

\begin{theorem}\label{t4.20}
Let \(X\) be a nonempty set and let \(\Phi\) be a mapping with \(\dom \Phi = X^{2}\). Then the following conditions are equivalent.
\begin{enumerate}
\item [\((i)\)] \(\Phi\) is combinatorially similar to pseudoultrametric.
\item [\((ii)\)] There is \(b_0 \in \Phi(X^{2})\) such that \(\Phi(x,x) = b_0\) holds for every \(x \in X\), and the binary relation 
\begin{equation}\label{t4.20:e1}
{\preccurlyeq}_{\Phi} := u_{\Phi}^{t} \cup \Delta_{\Phi(X^{2})}
\end{equation}
is a partial order on \(\Phi(X^{2})\), and \(b_0\) is the smallest element of \((\Phi(X^{2}), {\preccurlyeq}_{\Phi})\), and \(\Phi\) is a \( {\preccurlyeq}_{\Phi}\)-pseudo\-ultra\-metric on \(X\), and there is a linear order \({\preccurlyeq}\) on \(\Phi(X^{2})\) such that
\begin{equation}\label{t4.20:e2}
{\preccurlyeq}_{\Phi} \subseteq {\preccurlyeq}
\end{equation}
holds, and \((\Phi(X^{2}), {\preccurlyeq})\) is isomorphic to a subposet of \((\RR^{+}, \leqslant)\).
\item [\((iii)\)] The mapping \(\Phi\) is symmetric, and there is \(a_0 \in \Phi(X^{2})\) for which \(\Phi\) is \(a_0\)-coherent, and, for every triple \(\<x_1, x_2, x_3>\) of points of \(X\), there is a permutation
\[
\begin{pmatrix}
x_1 & x_2 & x_3\\
x_{i_1} & x_{i_2} & x_{i_3}
\end{pmatrix}
\]
such that \(\Phi(x_{i_1}, x_{i_2}) = \Phi(x_{i_2}, x_{i_3})\), and there is a linear order \({\preccurlyeq}\) on \(\Phi(X^{2})\) such that \(a_0\) is the smallest element of \((\Phi(X^{2}), {\preccurlyeq})\) and \(u_{\Phi} \subseteq {\preccurlyeq}\) holds, and \((\Phi(X^{2}), {\preccurlyeq})\) is isomorphic to a subposet of \((\RR^{+}, \leqslant)\).
\end{enumerate}
\end{theorem}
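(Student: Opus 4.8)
The plan is to establish the cycle $(i)\Rightarrow(iii)\Rightarrow(ii)\Rightarrow(i)$. The combinatorial skeleton common to all three conditions (symmetry, $a_0$-coherence, the isosceles permutation property, and the partial order ${\preccurlyeq}_{\Phi}$) will be supplied by Theorem~\ref{t3.15}, while the genuinely new ingredient---the passage from an abstract poset-valued structure to an honest $\RR^{+}$-valued pseudoultrametric---will be handled by Proposition~\ref{p3.23} together with a translation that sends the least value to $0$.

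For $(i)\Rightarrow(iii)$, I first note that an ordinary pseudoultrametric is a $\leqslant$-pseudoultrametric over $(\RR^{+},\leqslant)$ by Remark~\ref{r3.13}, so $(i)$ makes $\Phi$ combinatorially similar to a $Q$-pseudoultrametric; the $(ii)\Rightarrow(iii)$ part of Theorem~\ref{t3.15} then yields symmetry, $a_0$-coherence and the permutation property. To produce the linear order I fix the bijections $f\colon\Phi(X^{2})\to d(Y^{2})$ and $g\colon Y\to X$ from the similarity and pull back the usual order: set $a\preccurlyeq b$ iff $f(a)\leqslant f(b)$. Then $f$ becomes an order isomorphism of $(\Phi(X^{2}),{\preccurlyeq})$ onto the subposet $(d(Y^{2}),\leqslant)$ of $(\RR^{+},\leqslant)$. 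Since $f(a_0)=d(g^{-1}(x),g^{-1}(x))=0$ by $a_0$-coherence, $a_0$ is the ${\preccurlyeq}$-least element; and the strong triangle inequality applied to a triple realizing $\<y_1,y_2>\in u_{\Phi}$ (exactly as in Example~\ref{ex3.4}) gives $f(y_1)\leqslant f(y_2)$, i.e.\ $u_{\Phi}\subseteq{\preccurlyeq}$.

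For $(iii)\Rightarrow(ii)$, the inclusion $u_{\Phi}\subseteq{\preccurlyeq}$ into a linear (hence transitive and antisymmetric) order forces $u_{\Phi}^{t}\subseteq{\preccurlyeq}$, whence $u_{\Phi}^{t}$ is antisymmetric and ${\preccurlyeq}_{\Phi}=u_{\Phi}^{t}\cup\Delta_{\Phi(X^{2})}$ is a partial order. All hypotheses of condition (iii) of Theorem~\ref{t3.15} are now met, so its condition (iv) gives $\Phi(x,x)=a_0=:b_0$, the partial-order property of ${\preccurlyeq}_{\Phi}$, its least element $b_0$, and that $\Phi$ is a ${\preccurlyeq}_{\Phi}$-pseudoultrametric. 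The same linear order ${\preccurlyeq}$ from $(iii)$ satisfies ${\preccurlyeq}_{\Phi}\subseteq{\preccurlyeq}$ (reflexivity together with $u_{\Phi}^{t}\subseteq{\preccurlyeq}$) and is isomorphic to a subposet of $\RR^{+}$, which is precisely $(ii)$.

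For $(ii)\Rightarrow(i)$, since ${\preccurlyeq}_{\Phi}\subseteq{\preccurlyeq}$ and $b_0$ is ${\preccurlyeq}_{\Phi}$-least, $b_0$ is also ${\preccurlyeq}$-least and $\Phi$ is a ${\preccurlyeq}$-pseudoultrametric. Let $\iota\colon(\Phi(X^{2}),{\preccurlyeq})\to(A,\leqslant)$ be the given isomorphism onto a subposet $A\subseteq\RR^{+}$. The main obstacle here is that $\iota(b_0)$, the least element of $A$, need not be $0$, whereas a pseudoultrametric must vanish on the diagonal; I resolve this by translating, putting $f(q):=\iota(q)-\iota(b_0)$. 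Then $f\colon\Phi(X^{2})\to\RR^{+}$ is an isotone injection with $f(b_0)=0$, so the implication $(iii)\Rightarrow(i)$ of Proposition~\ref{p3.23} shows that $f\circ\Phi$ is an $\RR^{+}$-pseudoultrametric, i.e.\ an ordinary pseudoultrametric. Finally $\operatorname{id}_{X}$ together with the bijection $f\colon\Phi(X^{2})\to(f\circ\Phi)(X^{2})$ realizes a combinatorial similarity between $\Phi$ and $f\circ\Phi$, completing the cycle.
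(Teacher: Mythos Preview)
Your proof is correct and follows a slightly different route from the paper. The paper argues $(i)\Rightarrow(ii)$, $(ii)\Rightarrow(i)$, and $(ii)\Leftrightarrow(iii)$; your cycle $(i)\Rightarrow(iii)\Rightarrow(ii)\Rightarrow(i)$ and your $(ii)\Rightarrow(i)$ step (translate by $\iota(b_0)$ and invoke Proposition~\ref{p3.23}) coincide with the paper's essentially verbatim. The real divergence is in extracting the linear order from condition~$(i)$: the paper first passes to the partial order ${\preccurlyeq}_{\rho}$ on the pseudoultrametric side, invokes Lemma~\ref{l3.18} and then Theorem~\ref{t4.3} to upgrade the combinatorial similarity to a weak similarity, and only then transports the order back to $\Phi(X^{2})$. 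You instead pull back $\leqslant$ directly through the bijection $f$ and verify $u_{\Phi}\subseteq{\preccurlyeq}$ by one application of the strong triangle inequality (as in Example~\ref{ex3.4}). Your argument is more self-contained and avoids the weak-similarity machinery of Section~4; the paper's route, on the other hand, illustrates that Theorem~\ref{t4.3} is doing the structural work and ties the result more explicitly into the weak-versus-combinatorial-similarity theme of the section.
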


\begin{proof}
\((i) \Rightarrow (ii)\). Let \((i)\) hold. Then using Theorem \ref{t3.15} we see that condition \((ii)\) is valid whenever there is a linear order \(\preccurlyeq\) on \(\Phi(X^{2})\) such that \eqref{t4.20:e2} holds and \((\Phi(X^{2}), {\preccurlyeq})\) is  isomorphic to a subposet of \((\RR^{+}, \leqslant)\). 

By condition~\((i)\), there are a set \(Y\) and a pseudoultrametric \(\rho \colon Y^{2} \to \RR^{+}\) such that \(\Phi\) and \(\rho\) are combinatorially similar. Write
\begin{equation}\label{t4.20:e3}
{\preccurlyeq}_{\rho} := u_{\rho}^{t} \cup \Delta_{\rho(Y^{2})}.
\end{equation}
From Lemma~\ref{l3.18} it follows that \(\rho\) is a \({\preccurlyeq}_{\rho}\)-pseudo\-ultra\-metric. Since \(\Phi\) and \(\rho\) are combinatorially similar, there exists a bijection \(g \colon X \to Y\) such that \(g\) is combinatorial similarity for \(\Phi\) and \(\rho\). Now using Theorem~\ref{t4.3}, and \eqref{t4.20:e1}, and \eqref{t4.20:e3} we see that \(g\) is a weak similarity for \(\Phi\) and \(\rho\). Consequently, there is an order isomorphism 
\[
f \colon \Phi(X^{2}) \to \rho(Y^{2})
\]
of posets \((\Phi(X^{2}), {\preccurlyeq}_{\Phi})\) and \((\rho(Y^{2}), {\preccurlyeq}_{\rho})\). By Proposition~\ref{p3.17} and Lemma~\ref{l3.18}, we obtain that
\[
(\gamma_1 \preccurlyeq_{\rho} \gamma_2) \Rightarrow (\gamma_1 \leqslant \gamma_2)
\]
is valid for all \(\gamma_1\), \(\gamma_2 \in \rho(Y^{2})\).

Let us define a binary relation \(\preccurlyeq\) by the rule:
\[
(\<g_1, g_2> \in {\preccurlyeq}) \Leftrightarrow (\<g_1, g_2> \in \Phi(X^{2}) \times \Phi(X^{2}) \text{ and } (f(g_1) \leqslant f(g_2)))
\]
Then \({\preccurlyeq}\) is a linear order satisfying all desirable conditions.

\((ii) \Rightarrow (i)\). Suppose \((ii)\) holds. Then \(\Phi\) is a \({\preccurlyeq}_{\Phi}\)-pseudo\-ultra\-metric on \(X\) and there is an injection \(f \colon \Phi(X^{2}) \to \RR^{+}\) such that
\[
(b_1 \preccurlyeq_{\Phi} b_2) \Rightarrow (f(b_1) \leqslant f(b_2))
\]
holds for all \(b_1\), \(b_2 \in \Phi(X^{2})\). Since \(b_0\) is the smallest element of the poset \((\Phi(X^{2}), {\preccurlyeq}_{\Phi})\), the function \(f^{*} \colon \Phi(X^{2}) \to \RR^{+}\) defined as
\[
f^{*}(b) = f(b) - f(b_0)
\]
is nonnegative and isotone, and satisfies the condition
\[
(f^{*}(b) = 0) \Leftrightarrow (b = b_0)
\]
for every \(b \in \Phi(X^{2})\). Proposition~\ref{p3.23} implies that \(f^{*} \circ \Phi\) is a pseudoultrametric on \(X\). From Definition~\ref{d2.17} it directly follows that \(\Phi\) and \(f^{*} \circ \Phi\) are combinatorially similar.

The validity of the equivalence \((ii) \Leftrightarrow (iii)\) follows from Theorem~\ref{t3.15}. We only note that \(u_{\Phi}^{t}\) is antisymmetric if and only if there is a partial order \({\preccurlyeq}'\) such that \({\preccurlyeq}' \supseteq u_{\Phi}\).
\end{proof}

The proof of the following corollary is similar to prove of Theorem~\ref{t4.20}.

\begin{corollary}\label{c4.22}
Let \(X\) be a nonempty set and let \(\Phi\) be a mapping with \(\dom \Phi = X^{2}\). Then the following conditions are equivalent.
\begin{enumerate}
\item [\((i)\)] \(\Phi\) is combinatorially similar to ultrametric.
\item [\((ii)\)] There is \(b_0 \in \Phi(X^{2})\) such that \(\Phi^{-1}(b_0) = \Delta_{X}\), and the binary relation 
\[
{\preccurlyeq}_{\Phi} := u_{\Phi}^{t} \cup \Delta_{\Phi(X^{2})}
\]
is a partial order on \(\Phi(X^{2})\), and \(b_0\) is the smallest element of \((\Phi(X^{2}), {\preccurlyeq}_{\Phi})\), and \(\Phi\) is a \( {\preccurlyeq}_{\Phi}\)-ultra\-metric on \(X\), and there is a linear order \({\preccurlyeq}\) on \(\Phi(X^{2})\) such that
\[
{\preccurlyeq}_{\Phi} \subseteq {\preccurlyeq}
\]
holds, and \((\Phi(X^{2}), {\preccurlyeq})\) is isomorphic to a subposet of \((\RR^{+}, \leqslant)\).
\item [\((iii)\)] The mapping \(\Phi\) is symmetric, and there is \(a_0 \in \Phi(X^{2})\) for which \(\Phi^{-1}(a_0) = \Delta_{X}\) holds, and, for every triple \(\<x_1, x_2, x_3>\) of points of \(X\), there is a permutation
\[
\begin{pmatrix}
x_1 & x_2 & x_3\\
x_{i_1} & x_{i_2} & x_{i_3}
\end{pmatrix}
\]
such that \(\Phi(x_{i_1}, x_{i_2}) = \Phi(x_{i_2}, x_{i_3})\), and there is a linear order \({\preccurlyeq}\) on \(\Phi(X^{2})\) such that \(a_0\) is the smallest element of \((\Phi(X^{2}), {\preccurlyeq})\) and \(u_{\Phi} \subseteq {\preccurlyeq}\) holds, and \((\Phi(X^{2}), {\preccurlyeq})\) is isomorphic to a subposet of \((\RR^{+}, \leqslant)\).
\end{enumerate}
\end{corollary}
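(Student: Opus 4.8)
The plan is to follow the proof of Theorem~\ref{t4.20} line by line, substituting the ultrametric counterparts of every tool used there: Corollary~\ref{c3.19} in place of Theorem~\ref{t3.15}, and Corollary~\ref{c3.24} in place of Proposition~\ref{p3.23}, while replacing each occurrence of the condition ``\(\Phi(x,x) = b_0\) for all \(x\)'' by the sharper condition \(\Phi^{-1}(b_0) = \Delta_X\). Since every ultrametric is in particular a pseudoultrametric, the portions of the argument that rely only on the pseudoultrametric (order-theoretic) structure can be transcribed verbatim.

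For \((i) \Rightarrow (ii)\), first I would invoke Corollary~\ref{c3.19}: from the combinatorial similarity of \(\Phi\) to an ultrametric (which, by Remark~\ref{r3.13}, is a \(\leqslant\)-ultrametric, so condition \((i)\) of that corollary holds) I read off that \(\Phi^{-1}(b_0) = \Delta_X\) for some \(b_0\), that \({\preccurlyeq}_\Phi = u_\Phi^t \cup \Delta_{\Phi(X^2)}\) is a partial order with smallest element \(b_0\), and that \(\Phi\) is a \({\preccurlyeq}_\Phi\)-ultrametric. It then remains to manufacture the linear order \({\preccurlyeq}\) embedding into \((\RR^{+}, \leqslant)\); here I would copy the corresponding step of Theorem~\ref{t4.20} unchanged. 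Writing \(\rho \colon Y^2 \to \RR^{+}\) for the ultrametric combinatorially similar to \(\Phi\) and \({\preccurlyeq}_\rho := u_\rho^t \cup \Delta_{\rho(Y^2)}\), Lemma~\ref{l3.18} makes \(\rho\) a \({\preccurlyeq}_\rho\)-pseudoultrametric, Theorem~\ref{t4.3} upgrades the combinatorial similarity \(g\) to a weak similarity, and the resulting order isomorphism \(f \colon \Phi(X^2) \to \rho(Y^2)\) together with the inclusion \({\preccurlyeq}_\rho \subseteq {\leqslant}\) (from Proposition~\ref{p3.17} and Lemma~\ref{l3.18}) lets me pull back the standard order, obtaining a linear \({\preccurlyeq}\) with \({\preccurlyeq}_\Phi \subseteq {\preccurlyeq}\) and \((\Phi(X^2), {\preccurlyeq})\) isomorphic to a subposet of \((\RR^{+}, \leqslant)\).

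For \((ii) \Rightarrow (i)\), the isomorphism of \((\Phi(X^2), {\preccurlyeq})\) onto a subposet of \((\RR^{+}, \leqslant)\) supplies an injection \(f \colon \Phi(X^2) \to \RR^{+}\) that is isotone for \({\preccurlyeq}_\Phi\). Since \(b_0\) is the smallest element, the shifted map \(f^{*}(b) := f(b) - f(b_0)\) is nonnegative and isotone, and the injectivity of \(f\) gives the equivalence \((f^{*}(b) = 0) \Leftrightarrow (b = b_0)\). This is exactly the hypothesis of Corollary~\ref{c3.24} with \(L = \RR^{+}\) and \(l_0 = 0\), so since \(\Phi\) is a \({\preccurlyeq}_\Phi\)-ultrametric, \(f^{*} \circ \Phi\) is a genuine ultrametric; as \(f^{*}\) is a bijection onto its image, Definition~\ref{d2.17} makes \(\Phi\) and \(f^{*} \circ \Phi\) combinatorially similar. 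Finally \((ii) \Leftrightarrow (iii)\) follows from Corollary~\ref{c3.19}, using the observation (already noted in Theorem~\ref{t4.20}) that \(u_\Phi^t\) is antisymmetric precisely when some partial order contains \(u_\Phi\).

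The only genuinely new point, and the one to watch, is the use of Corollary~\ref{c3.24} rather than Proposition~\ref{p3.23} in \((ii) \Rightarrow (i)\): Proposition~\ref{p3.23} would only yield a pseudoultrametric, so the positivity of \(f^{*} \circ \Phi\) must be secured separately. It is the equivalence \((f^{*}(b) = 0) \Leftrightarrow (b = b_0)\), combined with \(\Phi^{-1}(b_0) = \Delta_X\), that forces \((f^{*} \circ \Phi)(x,y) = 0\) to hold only when \(x = y\); verifying that this equivalence is available --- which rests on the injectivity of the embedding \(f\) --- is the crux that distinguishes the ultrametric case from the pseudoultrametric one.
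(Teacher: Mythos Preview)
Your proposal is correct and is precisely the approach the paper itself indicates: the paper's entire proof is the single sentence that the argument ``is similar to [the] prove of Theorem~\ref{t4.20},'' and your substitution of Corollary~\ref{c3.19} for Theorem~\ref{t3.15} and of Corollary~\ref{c3.24} for Proposition~\ref{p3.23}, together with tightening \(\Phi(x,x)=b_0\) to \(\Phi^{-1}(b_0)=\Delta_X\), is exactly how that adaptation must go. Your identification of the one nontrivial extra point---that the injectivity of \(f\) yields \((f^{*}(b)=0)\Leftrightarrow(b=b_0)\), which via Corollary~\ref{c3.24} and \(\Phi^{-1}(b_0)=\Delta_X\) upgrades the conclusion from pseudoultrametric to ultrametric---is spot on.
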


In connection with Theorem \ref{t4.20} and Corollary \ref{c4.22}, the following problem naturally arises.

\begin{problem}\label{pr4.21}
Describe (up to order-isomorphism) the partially ordered sets \((Q, {\preccurlyeq}_{Q})\) which admit extensions to totally ordered sets \((Q, {\preccurlyeq})\) such that \((Q, {\preccurlyeq})\) is order-isomorphic to a subposet of \((\RR^{+}, \leqslant)\).
\end{problem}

We do not discuss this problem in details but formulate the following conjecture.

\begin{conjecture}\label{con4.24}
The following conditions are equivalent.
\begin{enumerate}
\item [\((i)\)] A poset \((Q, {\preccurlyeq}_{Q})\) admits an extension to totally ordered set \((Q, {\preccurlyeq})\) such that \((Q, {\preccurlyeq})\) is order-isomorphic to a subposet of \((\RR^{+}, \leqslant)\).
\item [\((ii)\)] The inequality \(|Q| \leqslant 2^{\aleph_{0}}\) holds and every totally ordered subposet of \((Q, {\preccurlyeq}_{Q})\) can be embedded into \((\RR^{+}, \leqslant)\).
\end{enumerate}
\end{conjecture}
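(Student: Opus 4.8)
The plan is to separate the two implications, with the forward one being routine and the converse carrying the genuine content. For $(i) \Rightarrow (ii)$: suppose $\preccurlyeq$ is a linear extension of $\preccurlyeq_{Q}$ and $h \colon (Q, {\preccurlyeq}) \to (\RR^{+}, \leqslant)$ is an order isomorphism onto a subset. Injectivity of $h$ forces $|Q| \leqslant |\RR^{+}| = 2^{\aleph_{0}}$. Moreover, if $C \subseteq Q$ is a chain of $(Q, \preccurlyeq_{Q})$, then $\preccurlyeq_{Q}$ already restricts to a linear order on $C$, so it coincides with the restriction of $\preccurlyeq$ to $C$; hence $h|_{C}$ is an order embedding of $(C, \preccurlyeq_{Q})$ into $(\RR^{+}, \leqslant)$. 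This yields $(ii)$.

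For the converse I would first reformulate $(i)$ in a more workable form. Condition $(i)$ holds precisely when there is an \emph{injective} order-preserving map $\phi \colon (Q, \preccurlyeq_{Q}) \to (\RR^{+}, \leqslant)$, i.e. a map with $\phi(p) < \phi(q)$ whenever $p \prec_{Q} q$: given such a $\phi$, the linear order pulled back from $(\RR^{+}, \leqslant)$ extends $\preccurlyeq_{Q}$ and embeds into $(\RR^{+}, \leqslant)$, and conversely the embedding in $(i)$ is such a $\phi$. Combining Szpilrajn's Lemma~\ref{l3.3} with Cater's Lemma~\ref{l4.14}, condition $(i)$ is in turn equivalent to the existence of a linear extension of $\preccurlyeq_{Q}$ whose order topology is second countable. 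Thus the whole problem reduces to the following: under the hypotheses of $(ii)$, produce a second countable linear extension of $\preccurlyeq_{Q}$, equivalently an injective strictly isotone map $Q \to \RR^{+}$.

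To build such a $\phi$ I would attempt a transfinite recursion. Enumerate $Q = \{q_{\alpha} \colon \alpha < \kappa\}$ with $\kappa \leqslant 2^{\aleph_{0}}$, and at stage $\alpha$ choose a real value $\phi(q_{\alpha})$ lying strictly between the already-assigned values of the predecessors of $q_{\alpha}$ and those of its successors, distinct from all previously used values. The bound $|Q| \leqslant 2^{\aleph_{0}}$ guarantees there are always enough reals to preserve injectivity once room exists, and the chain hypothesis of $(ii)$ (together with Cantor's Lemma~\ref{l3.2} for the countable case) is what should control the order type of the values accumulated along any chain. The natural invariant to maintain is that at each stage the assigned values carry a countable, order-dense ``scaffold'' with reserved gaps, so that the growing linear order stays $\RR$-embeddable.

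The main obstacle is precisely that this recursion can get \emph{stuck}: the predecessors and successors of $q_{\alpha}$ placed so far may define a Dedekind cut of the reals realized by a single already-used value, leaving no admissible slot for $\phi(q_{\alpha})$. Since the lower set and upper set of $q_{\alpha}$ need not be chains, avoiding such pinning is not a local matter and cannot be arranged by naive greedy choices; it requires a global management of room that genuinely uses the hypothesis that \emph{every} chain embeds into $\RR$. I expect this to be the crux of the argument, and it is plausibly where set-theoretic phenomena enter: the gap between a poset admitting a strictly isotone map into $\QQ$ (a countable union of antichains) and one admitting such a map only into $\RR$ is closely tied to the distinction between special and Suslin-type trees, so a complete proof may well have to address, or impose additional set-theoretic hypotheses governing, objects of this kind. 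This is consistent with the statement being offered as a conjecture rather than a theorem.
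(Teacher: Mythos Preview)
There is nothing to compare your attempt against: in the paper this statement is explicitly a \emph{conjecture}, not a theorem, and no proof is offered. The author writes ``We do not discuss this problem in details but formulate the following conjecture'' and then ends the paper. So the relevant question is simply whether your proposal constitutes a proof on its own.

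Your argument for \((i) \Rightarrow (ii)\) is correct and complete. Your reformulation of \((i)\) as the existence of an injective strictly isotone map \(Q \to \RR^{+}\) is also correct and is the natural way to attack the converse.

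Your converse, however, is not a proof, and you say so yourself. The transfinite recursion you sketch is exactly the naive approach, and the obstacle you identify --- that at some stage the predecessors and successors already placed may pin \(q_{\alpha}\) to a single real with no room --- is genuine and not merely a technicality to be patched. Your closing remarks are well aimed: the question of when a poset admits a strictly isotone injection into \(\RR\) is tightly connected to specialness of trees, and a Suslin tree (which exists under \(\diamondsuit\)) satisfies \((ii)\) --- it has size \(\aleph_{1} \leqslant 2^{\aleph_{0}}\) and every chain is countable, hence embeds into \(\RR^{+}\) --- yet admits no strictly isotone map into \(\RR\), since for \(\omega_1\)-trees \(\RR\)-embeddability coincides with being special. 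So the conjecture as stated is not provable in \textsf{ZFC}; your instinct that set-theoretic hypotheses are unavoidable is correct, and indeed the statement appears to be consistently false. In short: the forward implication is fine, the converse is (at best) independent, and the paper does not claim otherwise.
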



\begin{thebibliography}{10}

\bibitem{Ada2018}
L.~Adams.
\newblock Universal totally ordered sets.
\newblock
  {https://www.whitman.edu/Docu\-ments/Academics/Mathematics/2018/Adams.pdf}.

\bibitem{And2009}
A.~Andrikopoulos.
\newblock Szpilrajn-type theorems in economics.
\newblock Munich Personal RePEc Archive, 2009.
\newblock {https://mpra.ub.uni-muenchen.de/id/eprint/14345}.

\bibitem{Ber2019SMJ}
V.~N. Berestovskii.
\newblock {On Urysohn's R-tree}.
\newblock {\em Siberian Mathematical Journal}, 60(1):10--19, 2019.

\bibitem{Bestvina2002}
M.~Bestvina.
\newblock {R-trees in topology, geometry and group theory}.
\newblock In R.~J. Daverman and R.~B. Sher, editors, {\em {Handbook of
  Geometric Topology}}, pages 55--91. Nort-Holland, Amsterdam, 2002.

\bibitem{BS2017}
J.~Beyrer and V.~Schroeder.
\newblock {Trees and ultrametric m\"{o}bius structures}.
\newblock {\em P-adic Numbers Ultrametr. Anal. Appl.}, 9(4):247--256, 2017.

\bibitem{Blu1953CP}
L.~M. Blumenthal.
\newblock {\em {Theory and Applications of Distance Geometry}}.
\newblock Clarendon Press, Oxford, 1953.

\bibitem{BP1982}
R.~Bonnet and M.~Pouzet.
\newblock {Linear Extensions of Ordered Sets}.
\newblock In I.~Rival, editor, {\em {Ordered Sets. NATO Advanced Study
  Institutes Series}}, volume~83 of {\em Series C: Mathematical and Physical
  Sciences}, pages 125--170, Dordrecht, 1982. Springer.

\bibitem{Carlsson2010}
G.~Carlsson and F.~M\'{e}moli.
\newblock {Characterization, stability and convergence of hierarchical
  clustering methods}.
\newblock {\em J.~Machine Learn. Res.}, 11(3/1):1425--1470, 2010.

\bibitem{Cat1999/2000RAE}
F.~S. Cater.
\newblock On order topologies and the real line.
\newblock {\em Real Analysis Exchange}, 25(2):771--780, 1999/2000.

\bibitem{CO2017TaAoC}
E.~Colebunders and K.~van Opdenbosch.
\newblock {Topological properties of non-Archimedean approach spaces}.
\newblock {\em Theory and Applications of Categories}, 32(41):1454--1484, 2017.

\bibitem{DLPS2008}
C.~Delhomm\'{e}, C.~Laflamme, M.~Pouzet, and N.~Sauer.
\newblock {Indivisible ultrametric spaces}.
\newblock {\em Topology and its Applications}, 155(14):1462--1478, 2008.

\bibitem{DLW}
E.~D. Demaine, G.~M. Landau, and O.~Weimann.
\newblock {On Cartesian Trees and Range Minimum Queries}.
\newblock In {\em {Proceedings of the 36-th International Colloquium, ICALP
  2009, Rhodes, Greece, July 5-12, 2009, Part I}}, volume 5555 of {\em Lecture
  notes in Computer Science}, pages 341--353. Springer-Berlin-Heidelberg, 2009.

\bibitem{DDP(P-adic)}
D.~Dordovskyi, O.~Dovgoshey, and E.~Petrov.
\newblock Diameter and diametrical pairs of points in ultrametric spaces.
\newblock {\em P-adic Numbers Ultrametr. Anal. Appl.}, 3(4):253--262, 2011.

\bibitem{DP2011SMJ}
A.~A. Dovgoshe\u{\i} and E.~A. Petrov.
\newblock Ptolemaic spaces.
\newblock {\em Siberian Mathematical Journal}, 52(2):222--229, 2011.

\bibitem{DD2009UMZ}
A.~A. Dovgoshey and D.~V. Dordovskyi.
\newblock Betweenness and isometric embeddings of metric spaces.
\newblock {\em Ukrain. Mat. Zh.}, 61(10):1319--1328, 2009.

\bibitem{Dov2019a}
O.~Dovgoshey.
\newblock Combinatorial characterization of pseudometrics.
\newblock {\em arXiv:1906.07411v1}, pages 1--32, 2019.

\bibitem{Dov2019PNUAA}
O.~Dovgoshey.
\newblock Finite ultrametric balls.
\newblock {\em P-adic Numbers Ultrametr. Anal. Appl.}, 11(3):177--191, 2019.

\bibitem{Dov2019v2}
O.~Dovgoshey.
\newblock On ultrametric-preserving functions.
\newblock {\em arXiv:1902.08747v2}, pages 1--9, 2019.

\bibitem{Dov2019IEJA}
O.~Dovgoshey.
\newblock {Semigroups generated by partitions}.
\newblock {\em Int. Electron. J. Algebra}, 26:145--190, 2019.

\bibitem{DD2010}
O.~Dovgoshey and D.~Dordovskyi.
\newblock {Ultrametricity and metric betweenness in tangent spaces to metric
  spaces}.
\newblock {\em P-adic Numbers Ultrametr. Anal. Appl.}, 2(2):100--113, 2010.

\bibitem{DM2008}
O.~Dovgoshey and O.~Martio.
\newblock Blow up of balls and coverings in metric spaces.
\newblock {\em Manuscripta Math.}, 127:89--120, 2008.

\bibitem{DM2009}
O.~Dovgoshey and O.~Martio.
\newblock {Products of metric spaces, covering numbers, packing numbers and
  characterizations of ultrametric spaces}.
\newblock {\em Rev. Roumaine Math. Pures. Appl.}, 54(5-6):423--439, 2009.

\bibitem{DP2013SM}
O.~Dovgoshey and E.~Petrov.
\newblock Subdominant pseudoultrametric on graphs.
\newblock {\em Sb. Math}, 204(8):1131--1151, 2013.
\newblock translation from Mat. Sb. \textbf{204} (2013), no.~8, 51--72.

\bibitem{DP2013AMH}
O.~Dovgoshey and E.~Petrov.
\newblock {Weak similarities of metric and semimetric spaces}.
\newblock {\em Acta Math. Hung.}, 141(4):301--319, 2013.

\bibitem{DP2018}
O.~Dovgoshey and E.~Petrov.
\newblock {From isomorphic rooted trees to isometric ultrametric spaces}.
\newblock {\em P-adic Numbers Ultrametr. Anal. Appl.}, 10(4):287--298, 2018.

\bibitem{DP2019}
O.~Dovgoshey and E.~Petrov.
\newblock {Properties and morphisms of finite ultrametric spaces and their
  representing trees}.
\newblock {\em P-adic Numbers Ultrametr. Anal. Appl.}, 11(1):1--20, 2019.

\bibitem{DPT2015}
O.~Dovgoshey, E.~Petrov, and H.-M. Teichert.
\newblock {On spaces extremal for the Gomory-Hu inequality}.
\newblock {\em P-adic Numbers Ultrametr. Anal. Appl.}, 7(2):133--142, 2015.

\bibitem{DPT(Howrigid)}
O.~Dovgoshey, E.~Petrov, and H.-M. Teichert.
\newblock {How rigid the finite ultrametric spaces can be?}
\newblock {\em Fixed Point Theory Appl.}, 19(2):1083--1102, 2017.

\bibitem{Fie}
M.~Fiedler.
\newblock {Ultrametric sets in Euclidean point spaces}.
\newblock {\em Electronic Journal of Linear Algebra}, 3:23--30, 1998.

\bibitem{GomoryHu(1961)}
R.~E. Gomory and T.~C. Hu.
\newblock Multi-terminal network flows.
\newblock {\em SIAM}, 9(4):551--570, 1961.

\bibitem{Groot1956}
J.~de Groot.
\newblock {Non-Archimedean metrics in topology}.
\newblock {\em Proc. A.M.S.}, 7(6):948--956, 1956.

\bibitem{GurVyal(2012)}
V.~Gurvich and M.~Vyalyi.
\newblock Characterizing (quasi-)ultrametric finite spaces in terms of
  (directed) graphs.
\newblock {\em Discrete Appl. Math.}, 160(12):1742--1756, 2012.

\bibitem{GV}
V.~Gurvich and M.~Vyalyi.
\newblock {Ultrametrics, trees, and bottleneck arcs}.
\newblock {\em Math. Ed., Moscow: MCNMO}, 3(16):75--88, 2012.

\bibitem{Hed1969JoA}
Z.~Hedrlin.
\newblock On universal partly ordered sets and classes.
\newblock {\em Journal of Algebra}, 11:503--509, 1969.

\bibitem{Hol}
J.~E. Holly.
\newblock {Pictures of ultrametric spaces, the p-adic numbers, and valued
  fields}.
\newblock {\em Amer. Math. Monthly}, 108(8):721--728, 2001.

\bibitem{How1976AP}
John~M. Howie.
\newblock {\em {An Introduction to Semigroup Theory}}, volume~7 of {\em L.M.S.
  Monographs}.
\newblock Academic Press, 1976.

\bibitem{H04}
B.~Hughes.
\newblock {Trees and ultrametric spaces: a categorical equivalence}.
\newblock {\em Adv. Math.}, 189(1):148--191, 2004.

\bibitem{BH2}
B.~Hughes.
\newblock {Trees, ultrametrics, and noncommutative geometry}.
\newblock {\em Pure Appl. Math. Q.}, 8(1):221--312, 2012.

\bibitem{Ibragimov2012}
Z.~Ibragimov.
\newblock {M\"{o}bius maps between ultrametric spaces are local similarities}.
\newblock {\em Ann. Acad. Sci. Fenn. Math.}, 37:309--317, 2012.

\bibitem{Joh1956PA}
J.~Johnston.
\newblock Universal infinite partially ordered sets.
\newblock {\em Proc. AMS}, 7:507--514, 1956.

\bibitem{Kel1975S}
J.~L. Kelley.
\newblock {\em {General Topology}}.
\newblock Springer-Verlag, New York --- Heidelberg --- Berlin, 1975.

\bibitem{KS2012}
W.~A. Kirk and N.~Shahzad.
\newblock {Some fixed point results in ultrametric spaces}.
\newblock {\em Topology Appl.}, 159:3327--3334, 2012.

\bibitem{KvL2014}
M.~Kleindessner and U.~von Luxburg.
\newblock {Uniqueness of ordinal embedding}.
\newblock In {\em {Proceedings of The 27-th conference of Learning Theory}},
  pages 40--67, 2014.

\bibitem{Kro2006TCS}
M.~Kr\"{o}tzch.
\newblock Generalized ultrametric spaces in quantitative domain theory.
\newblock {\em Theoret. Comput. Sci.}, 36(1--2):30--49, 2006.

\bibitem{KurMost}
K.~Kuratowski and A.~Mostowski.
\newblock {\em {Set Theory with an Introduction to Descriptive Set Theory}}.
\newblock North-Holland Publishing Company, Amsterdam --- New York --- Oxford,
  1976.

\bibitem{Lemin1984FAA}
A.~J. Lemin.
\newblock {On isosceles metric spaces}.
\newblock {\em Functional Analysis and its Applications}, pages 26--31, 1984.
\newblock (in Russian).

\bibitem{Lemin1984RMS39:5}
A.~J. Lemin.
\newblock {On the stability of the property of a space being isosceles}.
\newblock {\em Russ. Math. Surveys}, 39(5):283--284, 1984.

\bibitem{Lemin1984RMS39:1}
A.~J. Lemin.
\newblock {Proximity on isosceles spacesProximity on isosceles spaces}.
\newblock {\em Russ. Math. Surveys}, 39(1):143--144, 1984.

\bibitem{Lemin1985SMD32:3}
A.~J. Lemin.
\newblock {Isometric embedding of isosceles (non-Archimedean) spaces in
  Euclidean spaces}.
\newblock {\em Soviet Math. Dokl.}, 32(3):740--744, 1985.

\bibitem{Lemin1988}
A.~J. Lemin.
\newblock {An application of the theory of isosceles (ultrametric) spaces to
  the Trnkova-Vinarek theorem}.
\newblock {\em Comment. Math. Univ. Carolinae}, 29(3):427--434, 1988.

\bibitem{Lemin2003}
A.~J. Lemin.
\newblock {The category of ultrametric spaces is isomorphic to the category of
  complete, atomic, tree-like, real graduated lattices \(\mathbf{LAT}^*\)}.
\newblock {\em Algebra Universalis}, 50(1):35--49, 2003.

\bibitem{Men1928MA}
K.~Menger.
\newblock {Untersuchunger \"{u}ber allgemeine Metrik}.
\newblock {\em Math. Ann.}, 100:75--163, 1928.

\bibitem{P2018(p-Adic)}
E.~Petrov.
\newblock {Weak similarities of finite ultrametric and semimetric spaces}.
\newblock {\em P-adic Numbers Ultrametr. Anal. Appl.}, 10(2):108--117, 2018.

\bibitem{PD(UMB)}
E.~Petrov and A.~Dovgoshey.
\newblock {On the {G}omory-{H}u inequality}.
\newblock {\em J. Math. Sci.}, 198(4):392--411, 2014.
\newblock Translation from Ukr. Mat. Visn. 10(4):469--496, 2013.

\bibitem{PTAbAppAn2014}
P.~Pongsriiam and I.~Termwuttipong.
\newblock {Remarks on ultrametrics and metric-preserving functions}.
\newblock {\em Abstr. Appl. Anal.}, 2014:1--9, 2014.

\bibitem{Pri1990RiM}
S.~Priess-Crampe.
\newblock {Der Banachesche fixpunktsats f\"{u}r ultrametrishe R\"{a}ume}.
\newblock {\em Results in Math.}, 18:178--186, 1990.

\bibitem{PR1993AMSUH}
S.~Priess-Crampe and P.~Ribenboim.
\newblock {Fixed points, combs and generalized power series}.
\newblock {\em Abh. Math. Sem. Univ. Hamburg}, 63:227--244, 1993.

\bibitem{PR1996AMSUH}
S.~Priess-Crampe and P.~Ribenboim.
\newblock {Generalized ultrametric spaces I.}
\newblock {\em Abh. Math. Sem. Univ. Hamburg}, 66:55--73, 1996.

\bibitem{PR1997AMSUH}
S.~Priess-Crampe and P.~Ribenboim.
\newblock {Generalized ultrametric spaces II}.
\newblock {\em Abh. Math. Sem. Univ. Hamburg}, 67:19--31, 1997.

\bibitem{PR2000JLP}
S.~Priess-Crampe and P.~Ribenboim.
\newblock {Ultrametric spaces and logic programming}.
\newblock {\em J. Logic Programming}, 42(2):59--70, 2000.

\bibitem{QD2009}
D.~Qiu.
\newblock {Geometry of non-Archimedian Gromov--Hausdorff distance}.
\newblock {\em P-adic Numbers Ultrametr. Anal. Appl.}, 1(4):317--337, 2009.

\bibitem{QD2014}
D.~Qiu.
\newblock {The structures of Hausdorff metric in non-Archimedian spaces}.
\newblock {\em P-adic Numbers Ultrametr. Anal. Appl.}, 6(1):33--53, 2014.

\bibitem{Rib1996PMH}
P.~Ribenboim.
\newblock {The new theory of ultrametric spaces}.
\newblock {\em Periodica Math. Hung.}, 32(1--2):103--111, 1996.

\bibitem{Rib2009JoA}
P.~Ribenboim.
\newblock {The immersion of ultrametric spaces into Hahn Spaces}.
\newblock {\em J. of Algebra}, 323(5):1482--1493, 2009.

\bibitem{Ros1982}
J.~G. Rosenstein.
\newblock {\em {Linear Orderings}}, volume~98 of {\em Pure and Applied
  Mathematics}.
\newblock Academic Press, New York, 1982.

\bibitem{SH1998IMSB}
A.~K. Seda and P.~Hitzler.
\newblock {Generalized ultrametrics, domains and an application to
  computational logic}.
\newblock {\em Irish. Math. Soc. Bull.}, 41:31--43, 1998.

\bibitem{Szp1930FM}
E.~Szpilrajn.
\newblock Sur l'extension de l'ordre partiel.
\newblock {\em Fundamenta Matematicae}, 16:386--389, 1930.

\bibitem{Vaughan1999}
J.~Vaughan.
\newblock {Universal ultrametric spaces of smallest weight}.
\newblock {\em Topology Proc.}, 24:611--619, 1999.

\bibitem{Vestfrid1994}
I.~Vestfrid.
\newblock On the universal ultrametric space.
\newblock {\em Ukr. Math. J.}, 46(12):1890--1898, 1994.

\bibitem{Wei2017TaAoC}
T.~Weighill.
\newblock {Mal'tsev objects, $R_1$-spaces and ultrametric spaces}.
\newblock {\em Theory and Applications of Categories}, 32(42):1485--1500, 2017.

\end{thebibliography}

\end{document}